\newtheorem{theorem}{Theorem}[section]
\newtheorem{proposition}[theorem]{Proposition}
\newtheorem{lemma}[theorem]{Lemma}
\newtheorem{corollary}[theorem]{Corollary}
\newtheorem{remark}[theorem]{Remark}
\DeclareMathOperator{\supp}{supp}
\newcommand{\N}{\mathbb{N}}
\newcommand{\R}{\mathbb{R}}
\renewcommand{\to}{\rightarrow}
\newcommand{\virg}[1]{\textquotedblleft#1\textquotedblright}
\newcommand{\apice}[1]{^{\raisebox{-0.5ex}{\fontsize{2}{2}\selectfont{#1}}}}
\begin{document}
\numberwithin{equation}{section}
\parindent=0pt
\hfuzz=2pt
\frenchspacing

\title[Morse index formula]{A Morse index formula\\ for  radial solutions of \\ Lane-Emden problems
}

\author[]{Francesca De Marchis, Isabella Ianni, Filomena Pacella}

\address{Francesca De Marchis, University of Roma {\em Sapienza}, P.le Aldo Moro 5, 00185 Roma, Italy}
\address{Isabella Ianni, Second University of Napoli, V.le Lincoln 5, 81100 Caserta, Italy}
\address{Filomena Pacella, University of Roma {\em Sapienza}, P.le Aldo Moro 5, 00185 Roma, Italy}

\thanks{2010 \textit{Mathematics Subject classification:} 35B05, 35B06, 35J91. }

\thanks{ \textit{Keywords}: critical and subcritical superlinear elliptic boundary value problem, sign-changing radial solution, asymptotic analysis, Morse index.}

\thanks{Research partially supported by:  PRIN $201274$FYK7$\_005$ grant, INDAM - GNAMPA and Sapienza Funds \virg{Avvio alla ricerca 2015}}

\begin{abstract} We consider the semilinear Lane-Emden problem:
\begin{equation}\label{problemAbstract}\left\{\begin{array}{lr}-\Delta u= |u|^{p-1}u\qquad  \mbox{ in }B\\
u=0\qquad\qquad\qquad\mbox{ on }\partial B
\end{array}\right.\tag{$\mathcal E_p$}
\end{equation}
where $B$ is the unit ball of $\R^N$, $N\geq3$, centered at the origin and $1<p<p_S$, $p_S=\frac{N+2}{N-2}$. \\
We prove that for any  radial solution $u_p$ of \eqref{problemAbstract} with $m$ nodal domains  its Morse index $\mathsf{m}(u_p)$ is given by the formula
\[\mathsf{m}(u_p)=m+N(m-1)\]
if $p$ is sufficiently close to $p_S$.
\end{abstract}

\maketitle

\section{Introduction}\label{Introduction}
We consider the classical Lane-Emden problem
\begin{equation}\label{problem}\left\{\begin{array}{lr}-\Delta u= |u|^{p-1}u\qquad  \mbox{ in }B\\
u=0\qquad\qquad\qquad\mbox{ on }\partial B\subset \R^N
\end{array}\right.
\end{equation}
where $B$ is the unit ball of $\R^N$, $N\geq 3$, centered at the origin and $1<p<p_S$, with $p_S=\frac{N+2}{N-2}=2^*-1$, where $2^*$ is the critical exponent for the Sobolev embedding $H^1_0(B)\hookrightarrow L^{2^*}(B)$.

\

In this paper we study the Morse index of the  radial  solutions of \eqref{problem}.

\

We recall that the \emph{Morse index} $\mathsf{m}(u_p)$ of a solution $u_p$ of \eqref{problem} is the maximal dimension of a subspace $X\subset H_0^1(B)$ where the quadratic form associated to the linearized operator at $u_p$:
\[
L_{p}=(-\Delta-p|u_p|^{p-1})
\]
is negative definite. Equivalently, since $B$ is a bounded domain, $\mathsf{m}(u_p)$ can be defined as the number of the negative Dirichlet  eigenvalues of $L_{p}$ counted with their multiplicity.

\

It is well known that \eqref{problem} possess infinitely many radial solutions among which only one is positive (or negative) while all the others change sign and can be characterized by the number of their nodal regions. For a given radial solution $u_p$ of \eqref{problem} with $m$ nodal domains, it has been proved in \cite{HarrabiRebhibSelmi} that the \emph{radial Morse index}, i.e. the number of the negative eigenvalues of $L_{p}$ in the Sobolev space of radial functions  $H_{0,rad}^1(B)$, is exactly $m$. Obviously the Morse index $\mathsf{m}(u_p)$, in $H^1_0(B)$, can be larger than $m$, because of the presence of negative non radial eigenvalues of $L_{p}$.
\\
The knowledge of the Morse index is, in general, a very important qualitative property of a solution. In particular it helps to classify the solutions and study their stability or possible bifurcations.

\

A first estimate that we get for a radial solution $u_p$ of \eqref{problem} with $m$ nodal domains is the following one (see Theorem \ref{teo:AftPacGen}):
\begin{equation}\label{stimaIntroBasso}
\mathsf{m}(u_p)\geq m+N(m-1),
\end{equation}
which improves a result in \cite{AftalionPacella}.

\

The main theorem of the present paper states that for $p$ close to the critical exponent the extimate \eqref{stimaIntroBasso} is sharp. More precisely we prove:

\begin{theorem}\label{teoPrincipaleMorse}
Let $N\geq3$ and $u_p$ be a radial  solution to \eqref{problem} with $m\in\N^+$ nodal regions. Then
\begin{equation}\label{formulina}
\mathsf{m}(u_p)= m+ N(m-1), \qquad\mbox{ for $p$ sufficiently close to $p_S$.
}
\end{equation}
\end{theorem}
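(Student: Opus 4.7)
The plan is to combine a spherical harmonic decomposition of the linearized eigenvalue problem with a blow-up analysis of $u_p$ as $p\to p_S$.

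I would first decompose any $\phi\in H^1_0(B)$ along spherical harmonics on $S^{N-1}$, so that the quadratic form associated to $L_p$ diagonalizes into a family of one-dimensional weighted eigenvalue problems for
\[
L_{p,k}\psi=-\psi''-\frac{N-1}{r}\psi'+\Bigl(\frac{k(k+N-2)}{r^2}-p|u_p|^{p-1}\Bigr)\psi
\]
on $(0,1)$ with Dirichlet conditions and weight $r^{N-1}\,dr$. Writing $M_k(p)$ for the number of negative eigenvalues of $L_{p,k}$ and $m_k$ for the dimension of the space of spherical harmonics of degree $k$ on $S^{N-1}$ (so $m_0=1$, $m_1=N$), one has $\mathsf{m}(u_p)=\sum_{k\ge 0}m_k M_k(p)$. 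The radial Morse index result of \cite{HarrabiRebhibSelmi} gives $M_0(p)=m$, while Theorem \ref{teo:AftPacGen} can be rephrased as $M_1(p)\ge m-1$. Consequently, \eqref{formulina} reduces to proving the upper bounds
\[
M_k(p)=0\quad\text{for every }k\ge 2,\qquad M_1(p)\le m-1,
\]
for $p$ close to $p_S$.

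To establish these bounds I would exploit the bubble-tower description of $u_p$ as $p\to p_S$: up to rescaling by $m$ parameters $0<\mu_{1,p}\ll\cdots\ll\mu_{m,p}\to 0$, $u_p$ converges to a sum of $m$ Aubin--Talenti bubbles with alternating signs concentrating at the origin. The spectrum of the critical bubble linearization $-\Delta-p_S U^{p_S-1}$ on $\R^N$ decomposed by spherical harmonics is classical: in mode $k=0$, one negative eigenvalue and a one-dimensional kernel generated by dilation; in mode $k=1$, no negative eigenvalue and a one-dimensional kernel per direction generated by translation; in modes $k\ge 2$, strictly positive spectrum. For $k\ge 2$ one then proceeds by contradiction: any negative eigenfunction of $L_{p_n,k}$ along a sequence $p_n\to p_S$, after rescaling at an appropriate concentration scale $\mu_{i,p_n}$, extracts a nontrivial $H^1(\R^N)$ eigenfunction with strictly negative eigenvalue of the bubble linearization in mode $k\ge 2$, which is impossible. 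This forces $M_k(p)=0$ for $k\ge 2$ when $p$ is sufficiently close to $p_S$.

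The bound $M_1(p)\le m-1$ is more delicate because of the translational kernel in mode $1$. Each of the $m$ bubbles contributes a zero mode, so $L_{p,1}$ admits $m$ small eigenvalues (of order $\mu_{i,p}^2$) whose signs must be determined. I would perform a Lyapunov--Schmidt reduction on the $m$-dimensional space spanned by the rescaled translation modes $Z_{i,p}$, reducing the count to the signature of an $m\times m$ reduced matrix whose entries are pairwise interaction integrals between bubbles. Exploiting the alternating signs along the tower and the explicit asymptotics of the interaction integrals, this matrix should have exactly $m-1$ negative eigenvalues and one positive eigenvalue; combined with the lower bound $M_1(p)\ge m-1$ this yields $M_1(p)=m-1$. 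The main obstacle is precisely this mode-$1$ analysis: obtaining the sharp asymptotic expansion of the $m\times m$ reduced matrix and correctly identifying the sign of each of its eigenvalues demands fine asymptotic estimates along the bubble tower, in particular for the off-diagonal crossed integrals and for the boundary contribution from the outermost bubble. The part $M_k(p)=0$ for $k\ge 2$ should instead follow from a standard concentration-compactness argument.
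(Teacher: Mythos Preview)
Your reduction is correct and equivalent to the paper's: after the spherical-harmonic splitting one must show $M_k(p)=0$ for $k\ge2$ and $M_1(p)\le m-1$. The paper reaches the same two targets through a \emph{weighted} radial operator $\widetilde L_{p,\mathrm{rad}}\,v=r^2\bigl(-v''-\tfrac{N-1}{r}v'-p|u_p|^{p-1}v\bigr)$ on annuli $A_n\nearrow B$ (the annulus approximation handles the $r^{-2}$ singularity at the origin that your $L_{p,k}$ carries); the link is $M_k(p)=\#\{i:\widetilde\beta_i(p)<-\lambda_k\}$, so the two goals become $\widetilde\beta_m(p)>-(N-1)$ and $\widetilde\beta_1(p)>-2N$.

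Where your plan diverges substantially is the treatment of $M_1(p)\le m-1$. You propose a Lyapunov--Schmidt reduction onto the $m$ translation modes and a signature analysis of the resulting $m\times m$ interaction matrix, and you rightly flag this as the main obstacle. The paper bypasses it entirely by a Sturm--Liouville comparison valid for \emph{every} $p\in(1,p_S)$ (Proposition~\ref{LemmaStimeAutovaloriRadialeConPeso Iparte}): the radial derivative $\eta=\partial_r u_p$ satisfies $\widetilde L_{p,\mathrm{rad}}\,\eta=-(N-1)\eta$, does not vanish at either endpoint of $(\tfrac1n,1)$, and has exactly $m-1$ interior zeros (the critical radii $s_{i,p}$). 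Sturm comparison with the $m$-th Dirichlet eigenfunction, which has $m-1$ interior zeros and vanishes at both endpoints, forces $\widetilde\beta_m(p)>-(N-1)$, i.e.\ $M_1(p)\le m-1$. No asymptotics, no interaction matrix, no reduced problem is needed here.

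Consequently all the blow-up work in the paper goes into the \emph{other} inequality, proved in the sharper form $\liminf_{p\to p_S}\widetilde\beta_1(m,p)\ge-(N-1)$ (Proposition~\ref{theorem:limiteBeta1}). This is not a routine concentration-compactness argument as you suggest: rescaling the first weighted eigenfunction $\phi_p$ at each of the $m$ scales $(M_{i,p})^{(p-1)/2}$ produces candidates $\widehat\phi_p^{\,i}$, but none is a priori guaranteed a nontrivial limit. The paper resolves this by a dichotomy: either some $\widehat\phi_p^{\,\kappa}$ retains mass on a fixed annulus and converges to a nontrivial eigenfunction of the limit weighted problem on $\R^N$, whose unique nonpositive radial eigenvalue is computed to be exactly $-(N-1)$ (Theorem~\ref{lemma:betastar}); or all rescalings lose mass, in which case one tests $\widehat\phi_p^{\,0}$ against the variational characterization of that limit eigenvalue and controls the discrepancy between the actual and the limit potential region by region, using pointwise Atkinson--Peletier type bounds on $r^2|u_p(r)|^{p-1}$ in every nodal annulus (Propositions~\ref{StimaFondamentalePositiva}, \ref{StimaFondamentaleNegativa} and Lemmas~\ref{proposition:StimaMaxASxDiEpsilon_p^-}, \ref{proposition:StimaMaxADxDiEpsilon_p^-}). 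Your contradiction sketch for $k\ge2$ would hit precisely this nontriviality issue; the genuine analytic difficulty lies here, not in mode $k=1$.
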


\

Let us make a few comments about this result pointing out some interesting features of the formula \eqref{formulina}.

\


First, writing \eqref{formulina}
as
\[\mathsf{m}(u_p)= m(N+1)-N,\]
we see that the Morse index $\mathsf{m}(u_p)$ grows linearly with respect to the number $m$ of nodal domains, which corresponds also to the number of negative radial  eigenvalues of the operator $L_{p}$ (\emph{cf.} \cite{HarrabiRebhibSelmi}).
This is somehow surprising since, in general, one would expect \emph{many more} negative nonradial eigenvalues then the negative radial ones. Indeed if we look at the distribution of the radial and nonradial eigenvalues of the linear operator $(-\Delta )$ in $H^1_0(B)$ we observe that:
\begin{itemize}
\item[(i)] on one side by a result of Br\"uning-Heintze and Donnelly \cite{Bruning1, Bruning2, Donnelly} we get that
\[\lambda_{r,m}\sim C m^2\quad \mbox{ as }m\rightarrow +\infty\]
where $\lambda_{r,m}$ is the $m$-th radial eigenvalues of $(-\Delta)$, which  implies that the number  $n_r(m^2)$ of the radial eigenvalues of $(-\Delta)$  bounded by $m^2$ is  $m$, more precisely
\[n_r(m^2)\sim  m \quad \mbox{ as $m\rightarrow +\infty$}\]
\item[(ii)] on the other side by the classical Weil law (see e.g. \cite{StraussBook}):
\[n(m^2)\sim C m^N\quad \mbox{ as $m\rightarrow +\infty\ $ ($N$ is the dimension)}\]
where $n(m^2)$ is the number of all the eigenvalues of $(-\Delta)$ in $H^1_0(B)$ less than or equal to $m^2$
\end{itemize}
In an equivalent way we can observe that if we consider a radial eigenfunction of $(-\Delta)$ in $H^1_0(B)$ with $m$ nodal regions, i.e. corresponding to the eigenvalue $\lambda_{r,m}$, then its \emph{Morse index} is just the number of the eigenvalues less than $\lambda_{r,m}$ which, by (i) and (ii),
 grows at a rate  of order $m^N$ and so faster then $m$ (if $N\geq 2$) as $m\rightarrow +\infty$.\\
 So $L_{p}$ represents an example of a linear, Schr\"odinger type, operator determined by the potential $V_p(x)=p|u_p(x)|^{p-1}$, for $p$ approaching $p_S$, for which (i) and (ii) do not hold, at least for the negative eigenvalues.

 \

Another interesting consequence of all this could be derived studying \eqref{problem} as $p\rightarrow 1$. In this case it is reasonable to conjecture the convergence of the Morse index $\mathsf{m}(u_p)$  to the Morse index of the Dirichlet radial eigenfunction of $(-\Delta)$ with $m$ nodal regions (i.e. the eigenfunction corresponding to the radial eigenvalue $\lambda_{r,m}$) possibly augmented by the multiplicity of $\lambda_{r,m}$, which is $1$.
Indeed suitable normalizations of solutions of \eqref{problem} converge to eigenfunctions of the Laplacian as $p\rightarrow 1$ (see \cite{Bonheure, {Grossi}}).  
Therefore the previous considerations indicate that  for large $m$ the Morse index $\mathsf{m}(u_p)$ for $p$ close to $1$ is of order $m^N$, hence it is much bigger than $m+ N(m-1)$, which is by \eqref{formulina} the Morse index of $u_p$ for $p$ close to $p_S$. So bifurcations from $u_p$ should appear, as $p$ ranges from $1$ to $p_S$, showing that the structure of the solution set of \eqref{problem} is richer than one could imagine.

%

\

Next we would like to point out another interesting fact: the formula \eqref{formulina} \emph{does not hold in dimension $N=2$, as $p\rightarrow p_S=+\infty$}. Indeed  in the recent paper \cite{DeMarchisIanniPacellaMathAnn} we have proved the following:

\begin{theorem}[\cite{DeMarchisIanniPacellaMathAnn}]\label{teoPrincipaleMorseN2}
Let $u_p$ be a radial sign-changing solution to \eqref{problem} with $2$ nodal regions, but with $B\subset\R^2$ and $p_S=+\infty$. Then
\[
\mathsf{m}(u_p)= 12 \qquad\mbox{ for $p$ sufficiently large. }
\]
\end{theorem}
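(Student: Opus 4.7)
The plan is to run the kind of angular spectral analysis that would be used for Theorem~\ref{teoPrincipaleMorse}, but with the asymptotic profile specific to two space dimensions, which in this case is of Liouville type rather than of Aubin--Talenti type. Every function in $H^1_0(B)$ decomposes as $\sum_{k\ge 0}\phi_k(r)Y_k(\theta)$, with $\dim Y_0=1$ and $\dim Y_k=2$ for $k\ge 1$, and $L_p=-\Delta-p|u_p|^{p-1}$ preserves this decomposition, acting on the $k$-th angular mode as the one-dimensional operator
\[
L_{p,k}=-\frac{d^2}{dr^2}-\frac{1}{r}\frac{d}{dr}+\frac{k^2}{r^2}-p|u_p|^{p-1}
\]
on $(0,1)$ with the natural boundary conditions. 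If $V_k$ denotes the number of strictly negative eigenvalues of $L_{p,k}$, then $\mathsf{m}(u_p)=V_0+2\sum_{k\ge 1}V_k$, and \cite{HarrabiRebhibSelmi} already gives $V_0=2$, so the problem reduces to showing $\sum_{k\ge 1}V_k=5$ for $p$ large.

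The key input is the two-dimensional singular-limit description of the radial sign-changing solutions of \eqref{problem}: after suitable rescaling around the positive extremum at the origin and around the negative extremum, the two nodal regions produce two standard Liouville bubbles $U(y)=\log\bigl(8/(1+|y|^2)^2\bigr)$ on $\R^2$, solutions of $-\Delta U=e^U$ of finite mass $8\pi$. After the corresponding change of variables, $L_{p,k}$ converges, as a quadratic form, to the $k$-th azimuthal reduction of the linearized Liouville operator $-\Delta-e^U$. The spectrum of this limit operator can be computed explicitly via stereographic projection to $S^2$: in the $k$-th azimuthal mode its eigenvalues are $j(j+1)-2$ for $j\ge|k|$, so there is exactly one strictly negative eigenvalue for $k=0$ (from $j=0$), a one-dimensional kernel for both $k=0$ (scaling, $j=1$) and $k=1$ (translation, $j=1$), and strictly positive spectrum otherwise.

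The main step is then to count, for each $k\ge 1$, how many eigenvalues of $L_{p,k}$ remain negative once the degeneracies associated with the two bubbles are broken by the Dirichlet condition on $\partial B$, by the mismatch of the two concentration scales and by the interaction of the bubbles with each other and with the outer region. I would perform a Lyapunov--Schmidt reduction onto the several copies of the limit kernel present in each angular mode and determine the sign of the resulting reduced quadratic form by expanding $u_p$ and $p|u_p|^{p-1}$ to next order at each scale. Summing the resulting contributions from the two bubbles and from all the relevant modes, and checking that $V_k=0$ for $k$ sufficiently large (since the outer limit operator $-\partial_{rr}-r^{-1}\partial_r+k^2/r^2$ is positive), would yield $\sum_{k\ge 1}V_k=5$ and hence the claimed total $\mathsf{m}(u_p)=2+2\cdot 5=12$.

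The main obstacle is exactly this sign determination. Because of the three-dimensional invariance of the Liouville equation, doubled by the presence of two bubbles, the limit operator has a rather large almost-kernel, and whether the corresponding perturbed eigenvalues of $L_{p,k}$ end up negative or positive depends delicately on higher-order terms in the expansion of $u_p$ at each bubble scale as well as on the inner--outer matching between the two scales. Producing quantitative estimates for the rate at which $p|u_p|^{p-1}$ converges to the sum of the two Liouville profiles, and controlling the reduced functional along every almost-kernel direction so as to obtain the exact count $V_k$ in each mode, is the technical heart of the argument carried out in \cite{DeMarchisIanniPacellaMathAnn}.
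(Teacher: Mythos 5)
Your reduction to angular modes and the identity $\mathsf{m}(u_p)=V_0+2\sum_{k\ge 1}V_k$ with $V_0=2$ is consistent with the strategy of \cite{DeMarchisIanniPacellaMathAnn} (there it is implemented through the weighted operator $|x|^2(-\Delta-p|u_p|^{p-1})$ on approximating annuli, so that the spectrum splits as radial weighted eigenvalues plus the eigenvalues $k^2$ of $-\Delta_{S^1}$). However, your key asymptotic input is wrong, and this is a genuine gap: it is \emph{not} true that both nodal regions rescale to the regular Liouville bubble $U(y)=\log\bigl(8/(1+|y|^2)^2\bigr)$. As recalled in the Introduction and in Remark \ref{rmkDifferenceN2} of the present paper (see \cite{GrossiGrumiauPacella2, DeMarchisIanniPacellaJEMS}), only the rescaling of $u_p^+$ converges to a regular Liouville solution, while the rescaling of the annular part $u_p^-$ converges to a radial solution of a \emph{singular} Liouville problem in $\R^2$. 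This difference is precisely what produces the value $12$: with your limit model (two regular bubbles, each with one negative radial direction and only translation/scaling kernels) the heuristic count of negative modes would come out far below $12$, so no sign analysis of an almost-kernel could recover $\sum_{k\ge1}V_k=5$ from it.

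Moreover, the mechanism in \cite{DeMarchisIanniPacellaMathAnn} is not a Lyapunov--Schmidt reduction with degeneracy breaking; the central step (Theorem 6.1 there, quoted in Remark \ref{rmkDifferenceN2}) is the explicit computation of the limit of the first weighted radial eigenvalue, $\lim_{p\to+\infty}\widetilde{\beta}_1(2,p)=-\tfrac{\ell^2+2}{2}<-1$, for an explicit $\ell>0$ determined by the singular Liouville profile, together with the bound $\widetilde{\beta}_2(2,p)>-1$ (the $N=2$ analogue of Proposition \ref{LemmaStimeAutovaloriRadialeConPeso Iparte}). The count $12=2+2\cdot 5$ then follows from the decomposition $\widetilde{\mu}=\widetilde{\beta}_i+k^2$: the modes $k=1,\dots,5$ coupled with $\widetilde{\beta}_1$ give the ten nonradial negative eigenvalues, while $\widetilde{\beta}_2$ contributes only the radial one. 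Your proposal leaves exactly this quantitative step (the size of the negative first eigenvalue in the singular-limit problem) unaddressed, and as written the plan would fail at the sign/count stage because the limit operator you linearize around is not the correct one.
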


\

Obviously $12\neq m+N(m-1)=4$ for $N=2$ and $m=2$. Note that in this case the value of $m(u_p)$ seems to be related to the Morse index of one of the radial solutions to the singular Liouville problem in $\R^2$ (\cite{ChenLin}), see \cite{DeMarchisIanniPacellaMathAnn} for further details.

\

Let us describe the method for proving Theorem \ref{teoPrincipaleMorse}, which also clarifies the differences with the case $N=2$.

\

Since the solutions $u_p$ are radial, to study the spectrum of the linearized operator $L_p$ we decompose it as a sum of the spectrum of a radial weighted operator and the spectrum of the Laplace-Beltrami operator on the unit sphere. To bypass the difficulty of dealing with a weighted eigenvalue problem with a singularity at the origin we approximate the ball $B$ by annuli $A_n$ with a small hole, showing that the number of negative eigenvalues of the linearized operator $L_p$ is preserved (we refer to \cite{DeMarchisIanniPacellaMathAnn} for this). Then (see Section \ref{Section:Approximation}) it turns out that the Morse index $\mathsf{m}(u_p)$ is determined by the \emph{size} of the first $(m-1)$ (radial) eigenvalues $\tilde{\beta}_i(p)$, $i=1,\ldots, m-1$, of the weighted operator
\begin{equation}\label{opw}
\tilde{L}^n_p=|x|^2(-\Delta-V_p(x))
\end{equation}
in $H_0^1(A_n)$, where the potential $V_p(x)$ is $p|u_p(x)|^{p-1}$ and $n=n_p$ is properly chosen.\\In order to study  these eigenvalues a good knowledge of the potential $V_p(x)$ is needed which, in turns, means to have accurate estimates on the solutions $u_p$. This is where the hypothesis on the exponent $p$ enters.\\
If $N\geq3$, in Section \ref{Section:Asymptotic2} we make a precise analysis of the asymptotic behavior of $u_p$ as $p\to p_S$, which allows to get the needed estimates on the potential $V_p(x)$ for $p$ close to the critical exponent.
In particular  we get that suitable rescalings of $u_p$ in each nodal region converge to the same positive radial solution $U$ of the critical equation in $\R^N$:
\begin{equation}\label{introeqcritica}
-\Delta U=U^{p_S}\ \mbox{ in }\mathbb R^N,\ N\geq3.
\end{equation}
This allows to detect precisely the asymptotic behavior, as $p\to p_S$, of the first eigenvalue $\tilde \beta_1(p)$ (and then, as a consequence, of all the other eigenvalues $\tilde \beta_i(p)$, $i=2,\ldots, m-1$) by several nontrivial estimates  (see Section \ref{Section:Asymptotic4}).

\

In dimension $2$ the procedure followed in \cite{DeMarchisIanniPacellaMathAnn} is similar but the striking difference with respect to the case $N\geq3$ is that the limit problems, as $p\to+\infty$, for the positive and negative part of the nodal radial solutions $u_p$ with $2$ nodal domains are different. Indeed it was proved in \cite{GrossiGrumiauPacella2} that (assuming w.l.g. $u_p(0)>0$) a suitable rescaling of $u_p^+$ converges to a regular solution of the Liouville problem in $\R^2$, while a suitable rescaling of $u^-_p$ converges to a radial solution of a singular Liouville problem in $\R^2$ (see also \cite{DeMarchisIanniPacellaJEMS}). So the estimates needed to compute the Morse index of $u_p$ are completely different and the contribution from the annular nodal region is bigger and makes the Morse index of $u_p$ higher with respect to the corresponding case in dimension $N\geq3$.  This difference reflects in the study of the asymptotic behavior of the first radial eigenvalue $\tilde{\beta}_1(p)$ (see Remark \ref{rmkDifferenceN2}) which makes the proof in dimension $N\geq 3$ more delicate than that for $N=2$. 
\\
We also point out that the assertion of Theorem \ref{teoPrincipaleMorse} holds for radial solutions to \eqref{problem} with \emph{any number of nodal regions}, while  in the case $N=2$ the result of \cite{DeMarchisIanniPacellaMathAnn} has been obtained only for solutions with $2$ nodal regions. This is because an asymptotic analysis of radial solutions with $m\geq 3$ is lacking in dimension $N=2$. We believe that the strategy of the present paper could be pursued also in dimension $N=2$ to get a result for general radial solutions. We plan to do this in a future paper.
%

\

A final comment is that the  whole strategy for the Morse index computation (here as in \cite{DeMarchisIanniPacellaMathAnn}) relies on the peculiar behavior of the radial solutions which have all the nodal regions shrinking at the same point as $p\rightarrow p_S$ (as $ p\rightarrow +\infty$ when $N=2$). This property also induces an interesting blow-up (in time) phenomenon in the associated parabolic problem with initial data close to the radial stationary solutions (see \cite{DeMarchisIanni, DicksteinPacellaSciunzi, MarinoPacellaSciunzi}).

\

The paper is organized as follows. We start in Section \ref{sectionLowerBounMorse} by proving a lower bound for the Morse index of radial solutions of semilinear elliptic Dirichlet problems with general autonomous nonlinearities. This part holds in any dimension $N\geq 2$ and extends previous results in \cite{AftalionPacella} giving, as a special case, the estimate \eqref{stimaIntroBasso}. In Section \ref{Section:Asymptotic2} we perform the asymptotic analysis of the radial  solutions of \eqref{problem} as $p\to p_S$. The results in this section are interesting in themselves and do not appear in previous papers. In Section \ref{Section:Approximation} we approximate the eigenvalue problem in the ball by corresponding ones in approximating annuli and set the auxiliary weighted eigenvalue problems. In section \ref{Section:Asymptotic4} we study the radial eigenvalues of the weighted operator $\tilde{L}^n_p$ introduced in  \eqref{opw}; in particular the analysis of the first one $\tilde{\beta}_1(p)$ is the central part of the section. The delicate estimates that we develop here are crucial for our proof; in order to obtain them we need to 
 analyze  accurately the contribution to the Morse index of each nodal region of $u_p$.
%
%
Finally  the proof of Theorem \ref{teoPrincipaleMorse} is presented in Section \ref{section:ProofMain}.

\tableofcontents

\

\section{A lower bound for the Morse index}\label{sectionLowerBounMorse}
We consider a semilinear elliptic problem with a general autonomous nonlinearity:
\begin{equation}
\label{problemaGenerale}
\left\{
\begin{array}{lr}
-\Delta u =f(u) & \mbox{ in }\Omega\\
u=0 & \mbox{ on }\partial\Omega
\end{array}
\right..
\end{equation} where $\Omega\subset\R^N$, $N\geq 2$ is  either a ball or an annulus centered at the origin and $f\in C^1(\R)$.

\

For a solution $u$ of \eqref{problemaGenerale} we denote by $\mathsf{m}(u)$ the \emph{Morse index} of $u$, namely the number of the negative Dirichlet eigenvalues of $L_u$ in $\Omega$ (counted with their multiplicity), where
$L_u: H^2(\Omega)\cap H^1_0(\Omega)\rightarrow L^2(\Omega)$ is the linearized operator at $u$, namely
\[ L_u( v): =   -\Delta v-f'(u(x))v.
\]
When the solution $u$ is radial we also denote by  $\mathsf{m_{rad}}(u)$ the \emph{radial Morse index} of $u$, i.e.  the number of negative radial eigenvalues of the linearized operator $L_{u}$.

\

We prove here a result which improves the one in \cite{AftalionPacella} and holds in any dimension $N\geq 2$.

\begin{theorem}\label{teo:AftPacGen} Let $u$ be a radial solution  of \eqref{problemaGenerale} with $m\geq 2$ nodal domains.
Then
\begin{equation}\label{tesi1}
\mathsf{m}(u)\geq \mathsf{m_{rad}}(u)+ N(m-1).
\end{equation}
Moreover, if $f$  satisfies  the condition \[f(s)\leq f'(s) s,\]
then  
\begin{equation}\label{tesi2}
\mathsf{m_{rad}}(u)\geq m
\end{equation}
and hence
\[\mathsf{m}(u)\geq m+ N(m-1).\]
\end{theorem}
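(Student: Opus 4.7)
The plan is to decompose the linearised operator $L_u$ via spherical harmonics, reducing the Morse-index count to a family of one-dimensional Sturm-Liouville problems, and then to extract the contribution of the degree-one angular sector using the radial derivative $u'(r)$.

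First I would expand $v\in H_0^1(\Omega)$ as $v = \sum_{\ell,j}\psi_{\ell,j}(r)Y_{\ell,j}(x/|x|)$ in spherical harmonics of multiplicities $d_\ell$ ($d_0=1$, $d_1=N$), with $-\Delta_{S^{N-1}}Y_{\ell,j} = \lambda_\ell Y_{\ell,j}$ and $\lambda_\ell = \ell(\ell+N-2)$. The quadratic form of $L_u$ block-diagonalises into the weighted one-dimensional Sturm-Liouville forms associated to $M_\ell \psi = -\psi''-\frac{N-1}{r}\psi' + \bigl(\frac{\lambda_\ell}{r^2}-f'(u)\bigr)\psi$ on the radial interval $(r_0,R)$ ($r_0=0$ for the ball). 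Writing $n_\ell$ for the number of strictly negative Dirichlet eigenvalues of $M_\ell$, one gets $\mathsf{m}(u) = \sum_\ell d_\ell\, n_\ell$ and $\mathsf{m_{rad}}(u) = n_0$, so \eqref{tesi1} reduces to $n_1 \ge m-1$.

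To prove $n_1 \ge m-1$ I would use that $u'$ solves $M_1 u' = 0$ (directly by differentiating the radial equation). Each nodal region of $u$ contains an interior extremum of $u$, i.e.\ an interior zero of $u'$; together with $u'(0)=0$ in the ball case (resp.\ the extra extremum in the outer nodal region in the annulus case), this supplies $m-1$ pairwise disjoint subintervals $J_1,\dots,J_{m-1}$ of the radial domain on which $u'$ vanishes at both endpoints and keeps a constant sign inside. The truncations $\psi_k := u'\chi_{J_k}$ lie in $H_0^1$, have disjoint supports, and integration by parts gives $Q_1(\psi_k) = 0$; by min-max this forces $\mu_{m-1}(M_1) \le 0$. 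To upgrade this to a strict inequality I would rule out $0$ as a Dirichlet eigenvalue of $M_1$: on the ball any such eigenfunction would, by regularity at the origin, be a scalar multiple of the unique regular solution $u'$, contradicting $u'(R)\neq 0$ (Hopf's lemma applied in the outermost nodal region); on the annulus, Sturm separation between $u'$ and a hypothetical independent eigenfunction $\phi$ yields at least $m-1$ interior zeros of $\phi$, contradicting the Sturm nodal count for any eigenfunction of index $<m$.

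For the second bound I would take as radial test functions the nodal restrictions $u_k := u\chi_{A_k}$. Each $u_k$ is radial and lies in $H_0^1(\Omega)$ since $u$ vanishes on $\partial A_k$, and testing the equation against $u$ on $A_k$ yields $Q(u_k) = \int_{A_k}\bigl(uf(u) - f'(u)u^2\bigr)$, which under the natural reading of the hypothesis, $sf(s) \le f'(s) s^2$ (for Lane-Emden this is just $(1-p)|s|^{p+1}\le 0$), is non-positive and in fact strictly negative whenever $u\not\equiv 0$ on $A_k$ and the inequality is strict somewhere. The disjoint supports of the $u_k$'s then exhibit an $m$-dimensional radial subspace on which $Q$ is negative definite, giving $\mathsf{m_{rad}}(u) \ge m$ and, combined with \eqref{tesi1}, the final bound. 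The most delicate step I anticipate is the strict inequality $\mu_{m-1}(M_1) < 0$: on the ball the regular-singular behaviour at the origin makes this immediate, whereas on the annulus it really needs the Sturm separation comparison described above.
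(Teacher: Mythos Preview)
Your proof is correct and rests on the same key object as the paper's --- the radial derivative $u'$, which solves the linearised equation in the degree-one angular sector --- but it is organised differently. The paper works geometrically: it considers the partial derivatives $\partial u/\partial x_i = u'(r)\,x_i/|x|$ (which are precisely your $u'\,Y_{1,i}$), restricts them to half-annuli $N_j^-$ bounded by consecutive critical spheres of $u$ and the hyperplane $\{x_i=0\}$, observes that each such restriction is a Dirichlet eigenfunction of $L_u$ with eigenvalue $0$, and then invokes \emph{strict domain monotonicity} (the union $\bigcup_j N_j^-$ is strictly contained in the half-domain $\Omega_i^-$, since the outermost shell $\{r_1<|x|<R\}$ is discarded) to push the $(m-1)$-th eigenvalue strictly below $0$. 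Odd reflection across the $N$ coordinate hyperplanes then yields the $N(m-1)$ nonradial negative directions.

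By contrast, you reach $\mu_{m-1}(M_1)\le 0$ via min--max on the span of the truncations $u'\chi_{J_k}$ and then separately exclude $0$ as a Dirichlet eigenvalue, which forces a case split (regular--singular analysis at the origin for the ball, Sturm separation for the annulus; note that in the annulus case your argument only shows $0$ is not among the first $m-1$ eigenvalues, which is all you need). Both routes are valid; the paper's domain-monotonicity step is a bit more economical since it delivers the strict inequality in one stroke, uniformly for both geometries, without analysing the kernel of $M_1$. Your approach has the advantage of making the block structure $\mathsf m(u)=\sum_\ell d_\ell\, n_\ell$ explicit, which clarifies why exactly $N$ is the multiplicative factor. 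For the bound $\mathsf{m_{rad}}(u)\ge m$, both you and the paper use the nodal restrictions $u\chi_{A_k}$ as test functions; your reading of the hypothesis as $sf(s)\le f'(s)s^2$ is indeed what the argument requires.
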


\begin{proof}
Let us fix $m\in \N^+$ and let  us denote by $u_m$  a radial solution of \eqref{problemaGenerale} having $m$ nodal regions.  We use the partial derivatives of $u_m$ to produce negative eigenvalues whose corresponding eigenfunctions are odd with respect to an hyperplane passing through the origin. Let us consider, for any $i=1,\ldots,N$, the hyperplane $T_i=\{x=(x_1,\ldots,x_N)\,:\, x_i=0\}$ and the domain $\Omega_i^-=\{x\in \Omega\,:\, x_i<0\}$, i.e. $\Omega_i^-$ is the \emph{half ball} or the \emph{half annulus} determinated by $T_i$.\\
Then we denote by $A_1,\ldots,A_m$ the nodal regions of $u_m$, counting them starting from the outer boundary in such a way that $\partial A_1$ contains $\partial \Omega$ if $\Omega$ is a ball or  the outer boundary of $\Omega$ if $\Omega$ is an annulus. Since $u_m$ is radial we have that $A_j$ are annuli for $j\in\{1,\ldots,m-1\}$ while $A_m$ is a ball if $\Omega$ is a ball or another annulus if so is $\Omega$. Let us first consider the case of the ball so that:
\[
A_j=\{x\in \Omega\,:\, R_{j+1}<|x|<R_j\}\qquad j=1,\ldots,m-1
\]
\[
A_m=\{x\in \Omega\,:\, |x|<R_m\}
\]
where $R_j$, $j=2,\ldots,m$, are the nodal radii and $R_1$ is the radius of the ball $\Omega$.\\
We consider the derivatives $\frac{\partial u_m}{\partial x_i}$, $i=1,\ldots,N$, which satisfy the equation 
\begin{equation}
\label{eqderivparz}
L_{u_m}\left(\frac{\partial u_m}{\partial x_i} \right)=
0\qquad \mbox{in  $\Omega$.}
\end{equation}
Using the symmetry of $u_m$ we have:
\begin{equation}
\label{derivparzsuT_i}
\frac{\partial u_m}{\partial x_i}=0\qquad \mbox{on  $\overline \Omega\cap T_i$.}
\end{equation}
Then we consider the \emph{half nodal regions}
\[
A^-_{i,j}=A_j\cap \Omega^-_i,\qquad \mbox{$j=1,\ldots,m$ \ and \ $i=1,\ldots,N.$}
\]
To simplify the notations let us fix $i=1$ and focus on the function $\frac{\partial u_m}{\partial x_1}$ in the sets $A^-_{1,j}$, that we simply denote by $A_j^-$. Whatever we prove for $\frac{\partial u_m}{\partial x_1}$ will hold with obvious changes for the other derivatives $\frac{\partial u_m}{\partial x_i}$, $i=2,\ldots,N$.\\
Let us observe that for each nodal region $A_j$, writing $u_m(r)=u_m(|x|)$ there exists at least one value $r_j\in(R_j,R_{j+1})$, $j=1,\ldots,m-1$, such that
\begin{equation}
\label{derivparzsur_j}
\frac{d u_m}{d r}(r_j)=0.
\end{equation}
Notice that if the nonlinearity $f=f(s)$ satisfies the condition $s\,f(s)\geq0$ then $r_j$ is the unique radius in $(R_j,R_{j+1})$ such that \eqref{derivparzsur_j} holds in $A_j$, $j=1,\ldots,m-1$.\\
Then, since $u_m$ is radial we have that $\frac{\partial u_m}{\partial x_1}\equiv0$ on the spheres
\begin{equation}
\label{S_j}
S_j=\{x\in\R^N\,;\,|x|= r_j\}\qquad j=1,\ldots,m-1.
\end{equation}
Let us fix one $r_j\in(R_j,R_{j+1})$ for each $j=1,\ldots,m-1$ (i.e. just one value of the radius in the interval $(R_j,R_{j+1})$ such that \eqref{derivparzsur_j} holds) and consider the sets
\[
N_j^-=\{x\in\R^N\,:\, r_j>|x|>r_{j+1}\}\cap \Omega_1^-\qquad j=1,\ldots,m-2
\]
and observe that for $j=1,\ldots,m-2$, by \eqref{eqderivparz}, \eqref{derivparzsuT_i} and \eqref{S_j}
\begin{equation}
\label{eqderivparzsuN_j}
\left\{
\begin{array}{ll}
L_{u_m}\left(\frac{\partial u_m}{\partial x_1}\right)=0 &\mbox{in $N_j^-$}\\
\frac{\partial u_m}{\partial x_1}=0&\mbox{on $\partial N_j^-$.}
\end{array}
\right.
\end{equation}
Thus $\frac{\partial u_m}{\partial x_1}$ is an eigenfunction of the linearized operator $L_{u_m}$ in $N^-_j$ corresponding to the zero eigenvalue which is the first one or an higher one according to the fact that $\frac{\partial u_m}{\partial x_1}$ changes sign or not in $N^-_j$.\\
Moreover also in the set
\[
N^-_{m-1}=\{x\in\R^N\,:\, r_{m-1}>|x|\geq 0\}\cap \Omega^-_1
\]
the function $\frac{\partial u_m}{\partial x_1}$ satisfies \eqref{eqderivparzsuN_j} (for $j=m-1$). Hence also in $N^-_{m-1}$ zero is an eigenvalue for $L_{u_m}$ with corresponding eigenfunction $\frac{\partial u_m}{\partial x_1}$.\\
In conclusion we have obtained $(m-1)$ adjacent regions where an eigenvalue of $L_{u_m}$ is zero. This implies that in the domain $N^-=\cup_{j=1}^{m-1}N^-_j$ the $h$-th eigenvalue $\lambda_h$ of $L_{u_m}$ is zero for some $h\geq m-1$.\\
Since $N^-$ is strictly contained in $\Omega^-_1$, by construction we have that the $h$-th eigenvalue $\lambda_h$ of $L_{u_m}$ in $\Omega^-_{1}$ is negative for some $h\geq m-1$, in particular $\lambda_{m-1}=\lambda_{m-1}(L_{u_m})<0$ in $\Omega^-_1$ and so are all $\lambda_n=\lambda_n(L_m)$ in $\Omega^-_1$ for $n\leq m-1$. By reflecting by oddness with respect to $T_1$ the corresponding eigenfunctions we get eigenfunctions of $L_{u_m}$ in the whole $\Omega$ corresponding to the same $(m-1)$ negative eigenvalues $\lambda_n$, $n=1,\ldots,m-1$.

Repeating the same arguments for all $i=1,\ldots,N$ we get at least $(m-1)$ negative eigenvalues $\lambda_n(u_m)$ in the domains $\Omega^-_i$, for each $i=1,\ldots,N$, which give eigenvalues of $L_{u_m}$ in the whole $\Omega$ whose corresponding eigenfunctions are odd with respect to $T_i$, $i=1,\ldots,N$.\\ Note that, by symmetry,
\[
\lambda_n(L_{u_m}, \Omega^-_i)=\lambda_n(L_{u_m}, \Omega^-_s)\qquad\mbox{for $i\neq s$, \ $i,s=1,\ldots,N$ \quad $n=1,\ldots,m-1$}
\]
but the corresponding eigenfunctions are linearly independent, because they are odd with respect to orthogonal axes.\\
So the multiplicity of each eigenvalue $\lambda_n$ of $L_{u_m}$ in $\Omega$ is at least $N$ so that we have got at least $N(m-1)$ negative eigenvalues. Since the eigenfunctions we have found are not radial, adding $\mathsf{m_{rad}}(u_m)$, we get the estimate \eqref{tesi1}.

\

If f satisfies the condition $f(u)\leq f'(u)u$ then it is easy to see that each (radial) nodal region gives the existence of one negative radial eigenvalue, so we get \eqref{tesi2}.

\

The case when $\Omega$ is an annulus follows in a similar, slightly easier, way, since the only difference is that the last nodal region $A_m$ is an annulus, so that it does not need to be treated in a different way with respect to the other regions $A_j$, $j=1,\ldots, m-1$.
\end{proof}

\

We end this section recalling the following known result concerning the case when $f$ is a power type nonlinearity and the domain $\Omega$ is a ball (see \cite{BartschWeth} for the case $m=2$ and  \cite[Proposition 2.9]{HarrabiRebhibSelmi} for any $m\in\N^+$) 
\begin{theorem}[\cite{BartschWeth, HarrabiRebhibSelmi}]\label{teoMorseRadiale} 
Let $\Omega$ be a ball and $f(u)=|u|^{p-1}u$, $p\in (1,p_S)$, $p_S=\frac{N+2}{N-2}$ if $N\geq 3$, $p_S=+\infty$ if $N=2$.
Let $u$ be a radial solution to \eqref{problemaGenerale} with $m\in\N^+$ nodal regions. Then
\[\mathsf{m_{rad}}(u)=m.\]
\end{theorem}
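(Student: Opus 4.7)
The plan is to combine a variational lower bound with a one-dimensional Sturm--Liouville analysis providing the matching upper bound.

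First, I would establish $\mathsf{m_{rad}}(u) \geq m$ by exhibiting an $m$-dimensional subspace of $H^1_{0,\mathrm{rad}}(B)$ on which the quadratic form
\[
Q_p(v) = \int_B |\nabla v|^2 \,dx - p\int_B |u|^{p-1}v^2\,dx
\]
associated to $L_u$ is negative definite. The natural candidate is $V = \mathrm{span}\{u|_{A_j}\}_{j=1}^m$, the span of the restrictions of $u$ to its nodal regions (extended by zero). These are radial $H^1_0$-functions with pairwise disjoint supports, so $V$ is $m$-dimensional and the matrix of $Q_p|_V$ is diagonal, with entries
\[
Q_p(u|_{A_j}) = \int_{A_j} u\, L_u u \,dx = (1-p)\int_{A_j}|u|^{p+1}\,dx < 0,
\]
using the identity $L_u u = (1-p)|u|^{p-1}u$ that follows from the equation. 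Courant--Fischer then yields $\mathsf{m_{rad}}(u)\geq \dim V = m$.

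For the matching upper bound $\mathsf{m_{rad}}(u) \leq m$, I would reduce the radial eigenvalue problem $L_u\psi=\lambda\psi$, with $\psi'(0)=0$ and $\psi(1)=0$, to a regular Sturm--Liouville problem on $(0,1)$ with weight $r^{N-1}$; by the classical oscillation theorem, $\mathsf{m_{rad}}(u)$ equals the number of zeros in $(0,1)$ of the solution $\phi$ of $L_u\phi=0$ that is regular at $r=0$, provided $\phi(1)\neq 0$ (the degenerate case can be handled by a small perturbation of $p$ and continuity). The scaling symmetry of the Lane--Emden nonlinearity makes $\phi$ explicit: the infinitesimal scaling generator
\[
\phi(r) = r\, u'(r) + \frac{2}{p-1}u(r)
\]
satisfies $L_u\phi = 0$ by a direct computation exploiting the $(p-1)$-homogeneity. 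One checks $\phi(0) = \tfrac{2}{p-1}u(0)\neq 0$, $\phi'(0) = 0$, and $\phi(1) = u'(1)\neq 0$ by Hopf's lemma; similarly $\phi(R_k) = R_k u'(R_k)\neq 0$ at each interior nodal radius $R_k$ ($k=2,\ldots,m$), with signs alternating in $k$ since $u'$ alternates sign across nodal radii. Combined with the sign of $\phi(0)$, this forces at least one sign change of $\phi$ in each of the $m$ subintervals $(0,R_m)$ and $(R_{k+1},R_k)$, $k=1,\ldots,m-1$, yielding at least $m$ zeros of $\phi$ in $(0,1)$.

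The main obstacle will be the sharpness of this count: proving that $\phi$ has \emph{exactly} one zero in each nodal subinterval. My plan is to introduce the auxiliary function $z(r) = r^{2/(p-1)}u(r)$, for which a direct computation gives $z'(r) = r^{2/(p-1)-1}\phi(r)$, so zeros of $\phi$ in $(0,1)$ correspond to critical points of $z$. After the Emden--Fowler change of variable $t = -\log r$, $y(t) = z(r(t))$ satisfies the autonomous dissipative ODE
\[
\ddot y + c\,\dot y + |y|^{p-1}y - d\,y = 0, \qquad c = \frac{N+2-(N-2)p}{p-1}>0,
\]
throughout the subcritical range $p\in(1,p_S)$, with associated energy $E(t) = \tfrac12\dot y^2 + \tfrac{|y|^{p+1}}{p+1} - \tfrac{d}{2}y^2$ satisfying $\dot E = -c\dot y^2 \leq 0$. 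The strict energy dissipation, combined with a phase-plane analysis inside each nodal interval of $y$, is what rules out a second critical point within any such subinterval and delivers the upper bound; this is where the subcriticality $p<p_S$ enters essentially, through the positivity of the damping coefficient $c$.
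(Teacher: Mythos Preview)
The paper does not give its own proof of this theorem; it is quoted as a known result from \cite{BartschWeth} (case $m=2$) and \cite{HarrabiRebhibSelmi} (general $m$). Your outline is essentially the strategy of those references: the lower bound via the nodal pieces of $u$ is immediate, and the upper bound via Sturm oscillation for the explicit scaling field $\phi=ru'+\tfrac{2}{p-1}u$, combined with the Emden--Fowler change of variables, is the correct route.

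The only point that needs expansion is the final phase--plane step. Your constant $d=\tfrac{2}{p-1}\bigl(N-2-\tfrac{2}{p-1}\bigr)$ is positive only when $p>\tfrac{N}{N-2}$; for $d\le 0$ the function $W(y)=\tfrac{|y|^{p+1}}{p+1}-\tfrac{d}{2}y^2$ is strictly increasing on $(0,\infty)$ and energy dissipation immediately excludes a second critical point. When $d>0$, $W$ has a double--well shape and dissipation alone is not enough. What closes the argument is the observation, from the sign of $\ddot y$ in the ODE, that any interior local maximum of $y$ in a nodal interval satisfies $|y|\ge d^{1/(p-1)}$ while any local minimum satisfies $|y|\le d^{1/(p-1)}$. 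Hence, once a local minimum occurs after the first maximum, the damped trajectory is thereafter confined away from $y=0$: it either oscillates toward the stable equilibrium $d^{1/(p-1)}$, or tends to it monotonically, or escapes to $+\infty$ (which is forbidden since $E$ is bounded and $W(y)\to+\infty$). Each alternative contradicts $y=0$ at the right endpoint of a bounded nodal interval, and contradicts $y(t)=e^{-2t/(p-1)}u(e^{-t})\to 0$ as $t\to\infty$ for the innermost one. With this filled in, your proof is complete.
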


\

\

\section{Asymptotic analysis of the nodal radial solutions}\label{Section:Asymptotic2}

In this section we analyze the asymptotic behavior as $p\rightarrow p_S$ of any radial sign-changing solution of \eqref{problem}. It is well known that for any fixed $p\in (1,p_S)$ the radial solutions of problem  \eqref{problem} are infinitely many, precisely for each $m\in\N^+$ there is a unique (up to the sign, being the nonlinearity odd) radial solution to \eqref{problem} with $m$ nodal domains.

\

So for $m\in\mathbb N^+$ let us denote by $u\apice{m}_p$ the unique nodal radial solution of \eqref{problem} having $m$ nodal regions and satisfying
\begin{equation}
\label{MaxInZero}
u\apice{m}_p(0)>0.
\end{equation}
The $1$-dimensional profile of this solution is described in Figure \ref{Figure}.
With abuse of notation we will  write often $u\apice{m}_p(r)=u\apice{m}_p(|x|)$.

\ 

In the next proposition we state a few qualitative properties of the solutions $u\apice{m}_p$.
\begin{proposition}\label{PropositionUnicoMaxeMin}
Let $p\in (1, p_S)$, then:
\begin{itemize}
\item[(i)] $u\apice{m}_p(0)=\|u\apice{m}_p\|_{\infty}$,
\item[(ii)] in each nodal region the map $r\mapsto u\apice{m}_p(r)$ has exactly one critical point (which is either a local maximum or a local minimum point, and they alternate),
\item[(iii)] $\int_{B}|\nabla u\apice{m}_p(y)|^{2}dy = \int_{B}|u\apice{m}_p(y)|^{p+1}dy  \underset{p\rightarrow p_S} {\longrightarrow} mS_N^{\frac{N}{2}}$,
\end{itemize}
where $S_N$ is the best constant for the Sobolev embedding $H^1_0(B)\hookrightarrow L^{2^*}(B)$:
\begin{equation}\label{SobolevEmb}\sqrt{S_N}\|v\|_{L^{2^*}(B)}\leq \|\nabla v\|_{L^2(B)}, \ \ \forall v\in H^1_0(B).
\end{equation}
\end{proposition}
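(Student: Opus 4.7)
The plan is to analyze the radial ODE satisfied by $u := u\apice{m}_p$,
\[
-u''(r) - \frac{N-1}{r}u'(r) = |u|^{p-1}u, \qquad r\in(0,1),\quad u'(0)=0,\ u(1)=0,
\]
through the Emden--Fowler energy $E(r) := \tfrac{1}{2}|u'(r)|^2 + \tfrac{1}{p+1}|u(r)|^{p+1}$, which satisfies $E'(r) = -\tfrac{N-1}{r}|u'(r)|^2\leq 0$ and is therefore nonincreasing. At any critical point $r_0\geq 0$ of $u$ (the origin being included, since $u'(0)=0$) one has $E(r_0) = \tfrac{1}{p+1}|u(r_0)|^{p+1}$, so the monotonicity of $E$ gives $|u(0)|\geq|u(r_0)|$ at every critical radius; together with the normalization \eqref{MaxInZero} this proves (i). For (ii), assume for contradiction that some nodal region $A_j$ contained two consecutive critical points $r_1<r_2$. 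Then $u$ would be monotone between them, so one of $r_1,r_2$ is a local minimum of the sign-constant function $u|_{A_j}$, say $u(r^*)>0$ with $u'(r^*)=0$ and $u''(r^*)\geq 0$; but the ODE at $r^*$ forces $u''(r^*)=-u(r^*)^p<0$, a contradiction.

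For (iii), the identity $\int_B|\nabla u|^2 = \int_B|u|^{p+1}$ follows immediately by testing the equation with $u$ and integrating by parts. To reach the limit $mS_N^{N/2}$ as $p\to p_S$ I would carry out a bubble analysis nodal region by nodal region. Denote by $M_{j,p}:=\max_{A_j}|u|$ the max in the $j$-th region (a single value by (ii)) and by $r_{j,p}$ the unique critical radius there; rescale
\[
v_{j,p}(y) := \frac{1}{M_{j,p}}\,u\!\left(r_{j,p} + M_{j,p}^{-(p-1)/2}\,y\right).
\]
A preliminary step is $M_{j,p}\to\infty$ for each $j$ as $p\to p_S$, which can be extracted from a Pohozaev identity on each nodal region together with the Liouville-type nonexistence of nontrivial solutions of $-\Delta v = |v|^{p-1}v$ on $\R^N$ in the subcritical range. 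Granted this, $v_{j,p}$ solves $-\Delta v_{j,p} = |v_{j,p}|^{p-1}v_{j,p}$ on expanding domains with $|v_{j,p}|\leq 1 = |v_{j,p}(0)|$, and elliptic regularity plus the Caffarelli--Gidas--Spruck classification force local $C^2$-convergence to $\pm U$, the Aubin--Talenti instanton of energy $\int_{\R^N}|\nabla U|^2=\int_{\R^N}U^{p_S+1}=S_N^{N/2}$.

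The main obstacle is upgrading these local convergences to the global identity. A change of variables on each $A_j$ yields the lower bound $\liminf_{p\to p_S}\int_{A_j}|u|^{p+1}\geq S_N^{N/2}$, and summing gives $\liminf\int_B|u|^{p+1}\geq mS_N^{N/2}$. The delicate part is the matching upper bound, which requires ruling out additional concentration within each $A_j$ (controlled by (ii) and the maximum principle applied to the rescaled equation) and residual mass between the bubbles. The cleanest route is through a variational min-max characterization of $u\apice{m}_p$ producing $\tfrac{1}{2}\|\nabla u\|_2^2 - \tfrac{1}{p+1}\|u\|_{p+1}^{p+1}\leq c\apice{m}_p$ with $c\apice{m}_p\to \tfrac{1}{N}\,mS_N^{N/2}$; combined with $\|\nabla u\|_2^2 = \|u\|_{p+1}^{p+1}$ this closes the limit. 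Pinning down this no-loss picture and the one-bubble-per-region count is the technical heart of (iii) and motivates the detailed asymptotic analysis of Section~\ref{Section:Asymptotic2}.
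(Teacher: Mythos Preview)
Your arguments for (i) and (ii) are correct and are precisely the ``o.d.e.\ arguments'' the paper alludes to without spelling out.

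For (iii) you diverge from the paper, and your route is both much heavier and not quite closed. The paper's argument is a two-line shortcut: the lower bound $\liminf_{p\to p_S}\int_{B_{j,p}}|\nabla u\apice{m}_p|^2\ge S_N^{N/2}$ on each nodal region follows directly from the Sobolev inequality combined with $\int|\nabla u_{j,p}|^2=\int|u_{j,p}|^{p+1}$ (no rescaling or blow-up needed); the upper bound comes from \emph{uniqueness}: Pistoia--Weth \cite{PistoiaWeth} construct, for each $m$, radial solutions with exactly $m$ nodal regions whose energy converges to $mS_N^{N/2}$, and by the uniqueness of $u\apice{m}_p$ these are $u\apice{m}_p$. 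That is the whole proof.

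Your proposed blow-up analysis, by contrast, is essentially the content of Section~\ref{Section:Asymptotic2}, which in the paper \emph{uses} (iii) as input rather than reproving it; in particular the divergence $M\apice{m}_{j,p}\to\infty$ (your ``preliminary step'') is derived there \emph{from} (iii), so your ordering risks circularity. Your suggested proof of $M\apice{m}_{j,p}\to\infty$ via Pohozaev plus subcritical Liouville is also not right as stated: in the limit $p\to p_S$ the exponent is critical, where bubbles do exist on $\R^N$, and for $j\ge 1$ the nodal regions are annuli, on which Pohozaev gives no obstruction. Finally, your upper-bound step (``a variational min-max characterization \ldots with $c\apice{m}_p\to \tfrac{1}{N}mS_N^{N/2}$'') is exactly the Pistoia--Weth input combined with uniqueness, but you have not said so; without naming that mechanism the argument remains incomplete.
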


The statement (i)--(iii) are known, in particular (i) and (ii) follow by o.d.e. arguments. Instead (iii) derives by the uniqueness of $u\apice{m}_p$. In fact on the one hand  it is easy to see by the Sobolev embedding that  for each nodal region $B_p$ of $u\apice{m}_p$ we have
\begin{equation}\label{energyRegione}
\lim_{p\rightarrow p_S}\int_{B_p}|\nabla u\apice{m}_p(y)|^{2}dy \geq S_N^{\frac{N}{2}}.
\end{equation} On the other hand,  for any fixed $m\in\N^+$, radial nodal solutions of \eqref{problem} with $m$ nodal regions and whose energy converges to $mS_N^{\frac{N}{2}}$  have been obtained in \cite{PistoiaWeth}.

\

Now let us denote by $r\apice{m}_{i,p}$, $i=1,\ldots m-1$, the nodal radii of $u\apice{m}_p$ and for uniformity of notation, by $r\apice{m}_{m,p}$ the radius of $B$. Then writing with abuse of notation $u\apice{m}_p(r)=u\apice{m}_p(|x|)$, we have
\begin{equation} \label{rp}
\begin{array}{lr}
0<r\apice{m}_{1,p}<r\apice{m}_{2,p}<\cdots <r\apice{m}_{m-1,p}<r\apice{m}_{m,p}:=1\\
u\apice{m}_p(r\apice{m}_{i,p})=0\qquad\mbox{i=1,\ldots,m.}
\end{array}
\end{equation} 
Moreover we denote by $s\apice{m}_{i,p}$, $i=0,\ldots m-1$, the unique maximum point of $|u\apice{m}_p|$ in each nodal region, so 
\begin{equation} \label{sp}
\begin{array}{lr}
s\apice{m}_{0,p}=0\\
s\apice{m}_{i,p} \in (r\apice{m}_{i,p}, r\apice{m}_{i+1,p}),\quad i=1,\ldots, m-1  \ (\mbox{if }m\geq 2)
\end{array}
\end{equation} 
and
\[(u\apice{m}_p)'(s\apice{m}_{i,p})=0, \quad i=0,\ldots, m-1.\]
Let us denote the   $m$ nodal regions of $u\apice{m}_p$ by $B\apice{m}_{i,p}\subset\R^N,$ $i=0,\ldots, m-1$, namely:
\begin{equation}\label{regioniNodaliupNotazione}
\begin{array}{lr}
B\apice{m}_{0,p}:=\{x\in\mathbb R^N\ :\ |x|< r\apice{m}_{1,p}\}\\
B\apice{m}_{i,p}:=\{x\in\mathbb R^N\ :\ r\apice{m}_{i,p}< |x|< r\apice{m}_{i+1,p}\}, \quad i=1,\ldots, m-1 \ (\mbox{if }m\geq 2).
\end{array}
\end{equation}
Then we consider the restriction of $|u\apice{m}_p|$ to the $i$-th nodal region
\begin{equation}\label{defNodalRegionsm}
u\apice{m}_{i,p}:=|u\apice{m}_p|\chi_{B\apice{m}_{i,p}}, \quad i=0,\ldots, m-1.
\end{equation}
 and let us define   
\begin{equation}
M\apice{m}_{i,p}:=\|u\apice{m}_{i,p}\|_{\infty}=u\apice{m}_{i,p}(s\apice{m}_{i,p})=|u\apice{m}_{p}(s\apice{m}_{i,p})|, \quad i=0,\ldots, m-1.\label{Mpm}
\end{equation}
Observe that when $m=2$ then $u\apice{2}_{0,p}$ and $u\apice{2}_{1,p}$ are respectively the positive and negative part of $u\apice{2}_p$. 
\vspace{-0.4cm}
\begin{figure}[H]   \label{Figure}
  \footnotesize\centering
  \resizebox{450pt}{!}{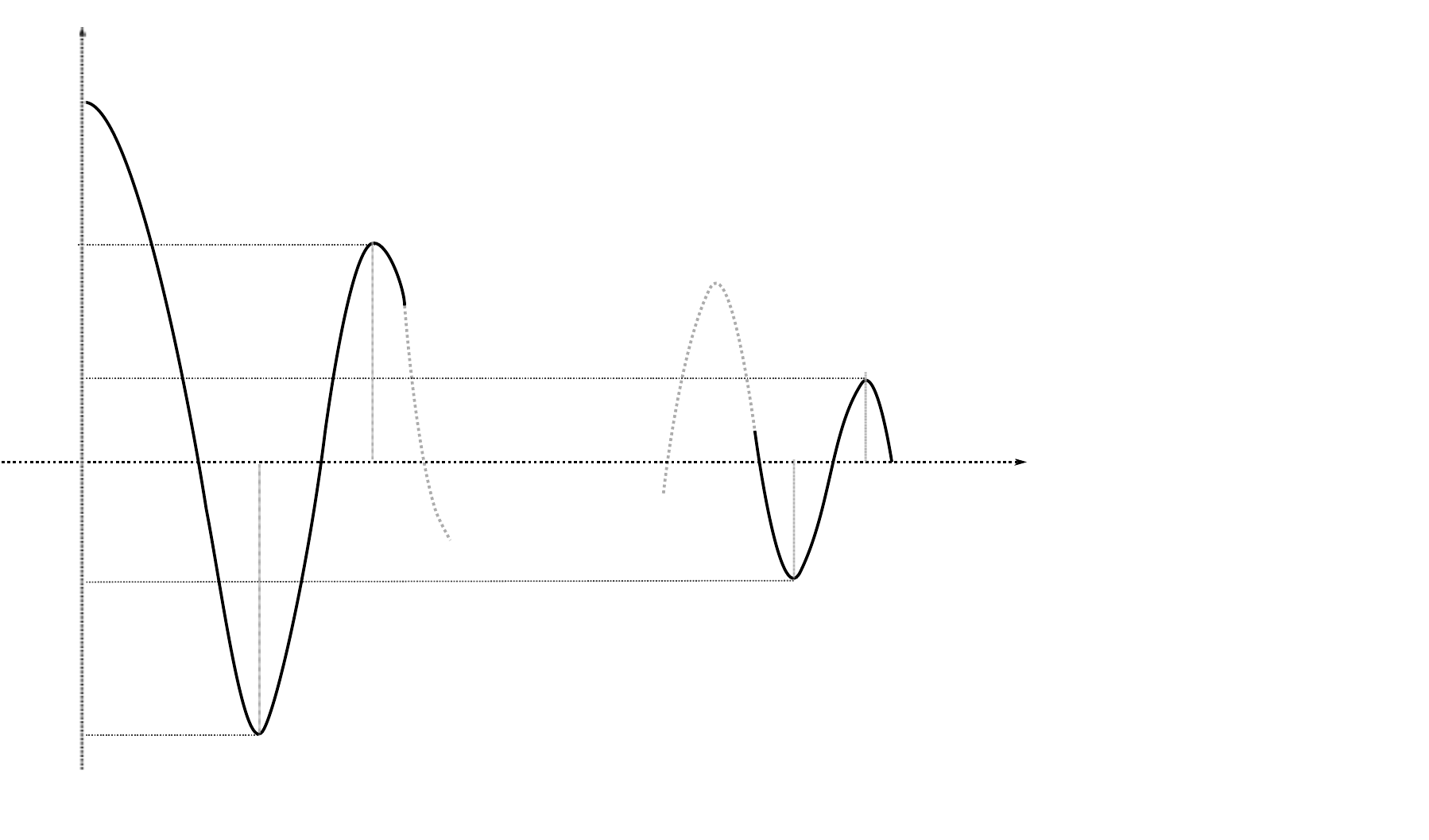}
 \caption{The radial solution  of \eqref{problem} having $m$ nodal regions}
\end{figure}

Our next result establishes the relation among  nodal radii in \eqref{rp},  maximum points in \eqref{sp} and scaling parameters in \eqref{Mpm} related to radial solutions of \eqref{problem} with a different number of nodal regions, $m$ and $h$ respectively:
\begin{lemma} \label{lemma:legami}
Let $m\in\mathbb N^+$, $m\geq 2$ and $h= 1, \ldots, m-1$. Then for $j=1,\ldots, h$ we have:
\begin{eqnarray}
 \label{SecondoLegame}
&& r\apice{\emph h}_{j,p}=\frac{r\apice{\emph m}_{j,p}}{r\apice{\emph m}_{h,p}}.
\end{eqnarray}
Moreover for $j=0,\ldots, h-1$ we have:
\begin{eqnarray}
&& s\apice{\emph h}_{j,p}=\frac{s\apice{\emph m}_{j,p}}{r\apice{\emph m}_{h,p}}
\label{PrimoLegame}
\\
\label{QuartoLegame}
&& s\apice{\emph h}_{j,p}(M\apice{\emph h}_{j,p})^{\frac{p-1}{2}} =s\apice{\emph m}_{j,p}( M\apice{\emph m}_{j,p})^{\frac{p-1}{2}}
\\
\label{TerzoLegame}
&& (M\apice{\emph h}_{j,p})^{\frac{p-1}{2}} =r\apice{\emph m}_{h,p}( M\apice{\emph m}_{j,p})^{\frac{p-1}{2}}.
\end{eqnarray}
\end{lemma}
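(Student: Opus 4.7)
The plan is to exploit the scale-invariance of the equation $-\Delta u = |u|^{p-1}u$ together with the uniqueness (up to sign) of the radial solution with a prescribed number of nodal regions and positive value at the origin. Specifically, I would start from $u_p^{(m)}$ and, for a fixed $h \in \{1,\dots,m-1\}$, restrict attention to the ball of radius $r_{h,p}^{(m)}$, which contains exactly $h$ of the nodal regions of $u_p^{(m)}$ and on whose boundary $u_p^{(m)}$ vanishes.

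The key step is to define the rescaled function
\[
v(x) := \bigl(r_{h,p}^{(m)}\bigr)^{\frac{2}{p-1}}\, u_p^{(m)}\!\bigl(r_{h,p}^{(m)}\, x\bigr), \qquad x \in \overline{B}.
\]
A direct computation shows that the factor $(r_{h,p}^{(m)})^{2/(p-1)}$ is precisely the one that makes the equation invariant: namely $-\Delta v(x) = |v(x)|^{p-1} v(x)$ in $B$. Moreover $v = 0$ on $\partial B$ since $u_p^{(m)}\bigl(r_{h,p}^{(m)}\bigr)=0$, $v$ is radial, has exactly $h$ nodal regions (the image under $r \mapsto r/r_{h,p}^{(m)}$ of the first $h$ nodal regions of $u_p^{(m)}$), and $v(0)= (r_{h,p}^{(m)})^{2/(p-1)} u_p^{(m)}(0) > 0$. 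By the uniqueness recalled at the beginning of Section \ref{Section:Asymptotic2}, we conclude
\[
v \equiv u_p^{(h)}.
\]

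From the identity $u_p^{(h)}(x) = (r_{h,p}^{(m)})^{2/(p-1)} u_p^{(m)}(r_{h,p}^{(m)} x)$ all four relations follow immediately. The zeros of $u_p^{(h)}$ are $r_{j,p}^{(h)}$ for $j=1,\dots,h$, and the identity forces $r_{h,p}^{(m)} r_{j,p}^{(h)} = r_{j,p}^{(m)}$, which gives \eqref{SecondoLegame}. Differentiating in $r$ and using $\|u_p^{(h)}\|_\infty$-type uniqueness of the critical points in each nodal region (Proposition \ref{PropositionUnicoMaxeMin}(ii)), the equation $(u_p^{(h)})'(s_{j,p}^{(h)}) = 0$ translates into $r_{h,p}^{(m)} s_{j,p}^{(h)} = s_{j,p}^{(m)}$ for $j=0,\dots,h-1$, proving \eqref{PrimoLegame}. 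Evaluating $|v|$ at $s_{j,p}^{(h)}$ yields
\[
M_{j,p}^{(h)} = \bigl(r_{h,p}^{(m)}\bigr)^{\frac{2}{p-1}} M_{j,p}^{(m)},
\]
and raising both sides to the power $(p-1)/2$ gives \eqref{TerzoLegame}; multiplying \eqref{PrimoLegame} and \eqref{TerzoLegame} gives \eqref{QuartoLegame}.

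I do not expect any real obstacle: the whole argument is the standard rescaling $u \mapsto \lambda^{2/(p-1)} u(\lambda \cdot)$ that preserves solutions of the Lane-Emden equation. The only care needed is to verify that $v$ indeed has $h$ nodal regions and satisfies $v(0)>0$, so that the uniqueness statement for $u_p^{(h)}$ can be invoked; once this is in place, the four formulas are bookkeeping corollaries of the single functional identity between $u_p^{(h)}$ and $u_p^{(m)}$.
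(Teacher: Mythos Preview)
Your proposal is correct and follows essentially the same approach as the paper: define the rescaled function $v(x)=(r_{h,p}^{(m)})^{2/(p-1)}u_p^{(m)}(r_{h,p}^{(m)}x)$ on $B$, observe it is a radial solution of \eqref{problem} with $h$ nodal regions and positive value at the origin, invoke uniqueness to identify it with $u_p^{(h)}$, and read off the four relations. The paper phrases the definition slightly differently (first restricting $u_p^{(m)}$ to its first $h$ nodal regions and then scaling), but the resulting function and the subsequent deductions are identical.
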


\begin{proof}
Let $h= 1, \ldots, m-1$ and consider the restriction of the solution $u\apice{m}_{p}$ to the first $h$ nodal regions:
\begin{equation}
w\apice{m}_{h,p}:= u\apice{m}_{p}\chi_{ \bigcup_{n=0}^{h-1} B\apice{m}_{n,p}}.
\end{equation}
Then it is easy to check that the scaling $\widetilde{w}\apice{m}_{h,p}(|x|)$ of $w\apice{m}_{h,p}$ defined as
\begin{equation}
\widetilde{w}\apice{m}_{h,p}(|x|):= (r\apice{m}_{h,p})^{\frac{2}{p-1}}w\apice{m}_{h,p}(r\apice{m}_{h,p} |x|)
\end{equation}
is a radial solution to \eqref{problem} having $h$ nodal regions and such that $\widetilde{w}\apice{m}_{h,p}(0)>0$. By uniqueness
\begin{equation}\label{coincidono}
\widetilde{w}\apice{m}_{h,p}= u\apice{h}_{p}. 
\end{equation}
As a consequence we immediately get
\eqref{PrimoLegame} and \eqref{SecondoLegame}. Moreover we also have:
\[
M\apice{h}_{0,p}=u\apice{h}_p(0)\overset{\eqref{coincidono}}{=} 
(r\apice{m}_{h,p})^{\frac{2}{p-1}}u\apice{m}_{p}(0){=}
(r\apice{m}_{h,p})^{\frac{2}{p-1}}M\apice{m}_{0,p},
\]
which gives \eqref{TerzoLegame} in the case  $j=0$. Instead, when $j=1,\ldots, h-1$, we have:
\[
M\apice{h}_{j,p}=u\apice{h}(s\apice{h}_{j,p})\overset{\eqref{coincidono}}{=} 
(r\apice{m}_{h,p})^{\frac{2}{p-1}}u\apice{m}_{p}(r\apice{m}_{h,p}s\apice{h}_{j,p})\overset{\eqref{PrimoLegame}}{=}
(r\apice{m}_{h,p})^{\frac{2}{p-1}}u\apice{m}_{p}(s\apice{m}_{j,p})=(r\apice{m}_{h,p})^{\frac{2}{p-1}} M\apice{m}_{j,p},
\]
which ends the proof of  \eqref{TerzoLegame}. 
Last by \eqref{TerzoLegame} and \eqref{PrimoLegame} we get \eqref{QuartoLegame}.
\end{proof}

\

In the sequel, in order to make the reading more fluid,  
when there is no possibility of misunderstanding we may drop the dependence on $m$ in our notations, writing, for instance, simply
$u_{i,p}, r_{i,p}, M_{i,p}, \ldots$ instead of $u\apice{m}_{i,p}, r\apice{m}_{i,p}, M\apice{m}_{i,p}\ldots$.

\

Similarly as in \cite[Lemma 2.1]{BenAyedElMehdiPacellaAlmostCritical} (where the case $m=2$ is considered) we get
\begin{proposition} \label{propositionComportamRegioNod} Let $m\in\mathbb N^+$. As $p\rightarrow p_S$ we have, for any $i=0,\ldots, m-1$:
\begin{eqnarray}
&& \label{limiteMezzaNormaGrad}
\int_{B}|\nabla u\apice{\emph m}_{i,p}(y)|^{2}dy= \int_{B}| u\apice{\emph m}_{i,p}(y)|^{p+1}dy \longrightarrow S_N^{\frac{N}{2}}
\\
\label{limiteMezzaNormaStar}
&&\int_{B}|u\apice{\emph m}_{i,p}(y)|^{2^*}dy \longrightarrow S_N^{\frac{N}{2}}
\\
&&\label{limiteMezzaNormap}
\int_{B}|u\apice{\emph m}_{i,p}(y)|^{\frac{N}{2}(p-1)} \longrightarrow S_N^{\frac{N}{2}}
\\
&&
\label{weakConv}
u\apice{\emph m}_{p}\rightharpoonup 0\mbox{ in }H^1_0(B)
\\
&&
\label{epsilonpmN3}
M\apice{\emph m}_{i,p}\longrightarrow +\infty
\end{eqnarray}
\end{proposition}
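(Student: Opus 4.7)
My plan is to follow the proof of \cite[Lemma 2.1]{BenAyedElMehdiPacellaAlmostCritical} for $m=2$ and adapt it to arbitrary $m\in\mathbb N^+$ by arguing on each nodal component $u\apice{m}_{i,p}$ separately and combining the resulting estimates with the global energy identity $\sum_{i=0}^{m-1}\int_B|\nabla u\apice{m}_{i,p}|^2\to mS_N^{N/2}$ from Proposition \ref{PropositionUnicoMaxeMin}(iii). The main ingredients are the Nehari identity on each nodal region, the subcritical Sobolev embedding \eqref{SobolevEmb}, elementary H\"older interpolation (exploiting $p+1<2^*$) and Pohozaev's identity on the star-shaped ball.

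For \eqref{limiteMezzaNormaGrad}, testing \eqref{problem} against $u\apice{m}_{i,p}$ on its support $B\apice{m}_{i,p}$, where it vanishes on the boundary, yields the Nehari-type identity $\int_B|\nabla u\apice{m}_{i,p}|^2=\int_B|u\apice{m}_{i,p}|^{p+1}$. Combining this with \eqref{SobolevEmb} and H\"older gives
\[
\int_B|\nabla u\apice{m}_{i,p}|^2\leq |B|^{1-\frac{p+1}{2^*}} S_N^{-\frac{p+1}{2}}\Bigl(\int_B|\nabla u\apice{m}_{i,p}|^2\Bigr)^{\frac{p+1}{2}},
\]
which, since the left-hand side is strictly positive, forces $\liminf_{p\to p_S}\int_B|\nabla u\apice{m}_{i,p}|^2\geq S_N^{N/2}$ for each $i$. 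Since Proposition \ref{PropositionUnicoMaxeMin}(iii) gives $\sum_{i=0}^{m-1}\int_B|\nabla u\apice{m}_{i,p}|^2\to mS_N^{N/2}$, every summand must reach equality in the limit, proving \eqref{limiteMezzaNormaGrad}. The asymptotics \eqref{limiteMezzaNormaStar} and \eqref{limiteMezzaNormap} then follow by interpolation between the $L^{p+1}$ and $L^{2^*}$ norms (using \eqref{SobolevEmb} and H\"older), noting that $N(p-1)/2\leq p+1\leq 2^*$ with equalities at $p=p_S$.

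For \eqref{weakConv}, the uniform $H^1_0$-bound extracts a weakly convergent subsequence $u\apice{m}_p\rightharpoonup u^*\in H^1_0(B)$ with a.e.\ convergence. Uniform integrability of $|u\apice{m}_p|^p$ (which is bounded in $L^{2^*/p}$ with $2^*/p>1$ for $p<p_S$) allows Vitali's theorem to pass to the limit in the weak formulation, so $u^*$ is a finite-energy solution of $-\Delta u^*=|u^*|^{p_S-1}u^*$ with zero Dirichlet data on the star-shaped ball $B$; Pohozaev's identity then forces $u^*\equiv 0$, and this being independent of the extracted subsequence gives $u\apice{m}_p\rightharpoonup 0$ in $H^1_0(B)$. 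Finally I would establish \eqref{epsilonpmN3} by contradiction: if $M\apice{m}_{i,p}\leq M$ along some subsequence, elliptic regularity yields $C^{1,\alpha}$-compactness of $u\apice{m}_{i,p}$. Either the nodal region $B\apice{m}_{i,p}$ collapses to a set of zero measure, whence $\int_B|u\apice{m}_{i,p}|^{p+1}\leq M^{p+1}|B\apice{m}_{i,p}|\to 0$ contradicts \eqref{limiteMezzaNormaGrad}; or its radii converge to distinct limits, producing a nontrivial Dirichlet solution $u^*$ of $-\Delta u^*=|u^*|^{p_S-1}u^*$ on a bounded limit ball/annulus with $\int|\nabla u^*|^2=S_N^{N/2}$. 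But any nontrivial critical Dirichlet solution on a bounded domain has energy \emph{strictly} greater than $S_N^{N/2}$, since equality in the Sobolev inequality would require $u^*$ to be an Aubin--Talenti bubble, which is positive and not compactly supported. In both cases we reach a contradiction, so $M\apice{m}_{i,p}\to+\infty$.

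The most delicate step is \eqref{weakConv}: passing to the limit in the critical nonlinearity as $p\to p_S$ is subtle because the $L^{2^*}$ norms do not converge strongly, but the strict subcriticality for each fixed $p<p_S$ provides exactly the uniform integrability needed to identify the weak limit equation, avoiding any recourse to the full concentration-compactness machinery.
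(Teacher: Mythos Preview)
Your argument is correct, and for \eqref{limiteMezzaNormaGrad}--\eqref{limiteMezzaNormap} it coincides with the paper's: Nehari on each nodal region, Sobolev plus H\"older to get the lower bound, and the global energy identity to force equality; then interpolation for the other two norms.

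Where you diverge is in \eqref{weakConv} and \eqref{epsilonpmN3}. The paper does not pass to the limit in the equation at all. Once \eqref{limiteMezzaNormaGrad} and \eqref{limiteMezzaNormaStar} are in hand, each $u\apice{m}_{i,p}$ is a minimizing sequence for the Sobolev embedding on the bounded domain $B$, and such sequences are necessarily weakly null (since $S_N$ is not attained on proper subdomains of $\R^N$); summing over $i$ gives \eqref{weakConv}. Then \eqref{epsilonpmN3} follows in one line: for any fixed $\alpha\in(0,2^*)$,
\[
S_N^{N/2}\leftarrow\int_B|u\apice{m}_{i,p}|^{2^*}\le \|u\apice{m}_{i,p}\|_\infty^{\alpha}\int_B|u\apice{m}_{i,p}|^{2^*-\alpha},
\]
and the last integral tends to $0$ by Rellich, forcing $M\apice{m}_{i,p}\to+\infty$. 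This is considerably shorter than your compactness-and-Pohozaev route for \eqref{epsilonpmN3} and avoids any discussion of the limiting geometry of the nodal annuli.

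Your approach is still valid, with one caveat in \eqref{weakConv}: the phrase ``bounded in $L^{2^*/p}$ with $2^*/p>1$'' is not quite enough for uniform integrability, since the exponent varies with $p$. You should observe instead that $p\cdot(2^*)'\le 2^*$ for all $p\le p_S$, so $|u\apice{m}_p|^{p}$ is uniformly bounded in the \emph{fixed} space $L^{(2^*)'}(B)$, which does give the required equi-integrability for Vitali. Once that is said, your identification of the weak limit via Pohozaev is a perfectly legitimate alternative to the paper's ``minimizing sequences are weakly null'' argument, and arguably more self-contained; the trade-off is that the paper's route makes \eqref{epsilonpmN3} essentially free, whereas you work harder for it.
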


\begin{proof}
\eqref{limiteMezzaNormaGrad} is a direct consequence of  Proposition \ref{PropositionUnicoMaxeMin}-(iii) and \eqref{energyRegione}. The convergence results in  \eqref{limiteMezzaNormaStar} and  \eqref{limiteMezzaNormap} follow then from \eqref{limiteMezzaNormaGrad}, indeed:
\begin{eqnarray*}S_N^{\frac{N}{2}}&\overset{\eqref{limiteMezzaNormaGrad}}{=} &\lim_{p\rightarrow p_S} \frac{\left(\int_B|u\apice{m}_{i,p}(y)|^{p+1}dy\right)^{\frac{2^*}{(p+1)}}}{|B|^{\frac{2^*}{p+1}-1}}\overset{\mbox{\scriptsize{ H\"older}}}{\leq}
\lim_{p\rightarrow p_S}\int_{B}|u\apice{m}_{i,p}(y)|^{2^*}dy\\
&\overset{\eqref{SobolevEmb}}{\leq}& \lim_{p\rightarrow p_S}\frac{\|\nabla u\apice{m}_{i,p}\|_{L^{2}(B)}^{2^*}}{S_N^{\frac{N}{N-2}}}\overset{\eqref{limiteMezzaNormaGrad}}{=}S_N^{\frac{N}{2}},\end{eqnarray*}
which proves \eqref{limiteMezzaNormaStar} and similarly we get \eqref{limiteMezzaNormap}:
\begin{align*} S_N^{\frac{N}{2}}& \overset{\scriptsize{\begin{array}{cr}\eqref{limiteMezzaNormaGrad}\\\eqref{limiteMezzaNormaStar}\end{array}}}{=}\lim_{p\rightarrow p_S} \frac{\left(\int_B|u\apice{m}_{i,p}(y)|^{p+1}dy\right)^{\frac{N}{2}}}{\left(\int_B|u\apice{m}_{i,p}(y)|^{2^*}dy\right)^{\frac{N-2}{2}}}\overset{\mbox{\scriptsize{ H\"older}}}
{\leq}
\lim_{p\rightarrow p_S}\int_{B}|u\apice{m}_{i,p}(y)|^{\frac{N}{2}(p-1)}dy\\
& \overset{\mbox{\scriptsize{ H\"older}}}{\leq} \lim_{p\rightarrow p_S}\int_B|u\apice{m}_{i,p}(y)|^{p+1}dy\overset{\eqref{limiteMezzaNormaGrad}}{=}S_N^{\frac{N}{2}}.
\end{align*}
The proof of \eqref{weakConv} follows immediately by the fact that $(u\apice{m}_{i,p})_p$ is (by \eqref{limiteMezzaNormaGrad} and \eqref{limiteMezzaNormaStar}) a minimizing sequence for the Sobolev embedding $H^1_0(B)\hookrightarrow L^{2^*}(B)$,  so that $u\apice{m}_{i,p}\rightharpoonup 0$ in $H^1_0(B)$ as $p\rightarrow p_S$.\\
Finally the proof of \eqref{epsilonpmN3} follows by \eqref{limiteMezzaNormaStar} and \eqref{weakConv}, indeed fixing $\alpha\in (0,2^*)$, then as $p\rightarrow p_S$:
\[S_N^{\frac{N}{2}}\overset{\eqref{limiteMezzaNormaStar}}{\longleftarrow}\int_B|u\apice{m}_{i,p}|^{2^*}dy\leq \|u\apice{m}_{i,p}\|_{\infty}^{\alpha}\int_B|u\apice{m}_{i,p}|^{2^*-\alpha}dy,\]
so, since by \eqref{weakConv} and Rellich Theorem 
$\int_B|u\apice{m}_{i,p}|^{2^*-\alpha}\rightarrow 0\ \mbox{ as }p\rightarrow p_S$, then necessarily \eqref{epsilonpmN3} holds.
\end{proof}

\

\

We recall now the classical inequality due to  Strauss (\cite{Strauss}), which holds for any $v\in H_{rad}^{1}(\mathbb R^N)$, $N\geq 3$:
\begin{equation}\label{Strauss}|v(x)|\leq C_N\frac{\|\nabla v\|_{L^2(\mathbb R^N)}}{|x|^{\frac{N-1}{2}}}\qquad\mbox{ for any }x\neq 0,
\end{equation}
where $C_N>0$ is a constant independent of $v$. From it we easily deduce:
\begin{proposition}\label{Proposition:spVaAZero} Let $m\in\mathbb N^+$, $m\geq 2$. For $i=1,\ldots, m-1$ we have 
\[s\apice{\emph m}_{i,p}\rightarrow 0 \mbox{\ (and so also $r\apice{\emph m}_{i,p}\rightarrow 0$) \qquad as }\ p\rightarrow p_S\]
\end{proposition}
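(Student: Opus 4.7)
The plan is to apply the Strauss inequality \eqref{Strauss} to $u\apice{m}_p$ (extended by zero outside $B$, so that it lies in $H^1_{rad}(\R^N)$) evaluated at the maximum points $s\apice{m}_{i,p}$. The two ingredients I need are already on the table: from Proposition \ref{PropositionUnicoMaxeMin}-(iii) (or equivalently \eqref{limiteMezzaNormaGrad} summed over $i$) the Dirichlet norm $\|\nabla u\apice{m}_p\|_{L^2(B)}$ stays bounded as $p\to p_S$, while from \eqref{epsilonpmN3} the peak heights $M\apice{m}_{i,p}=|u\apice{m}_p(s\apice{m}_{i,p})|$ blow up. The collision of these two facts at $x=s\apice{m}_{i,p}$ forces $s\apice{m}_{i,p}$ to vanish.

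Concretely, for $i=1,\ldots,m-1$ we have $s\apice{m}_{i,p}>0$, so we may insert $x$ with $|x|=s\apice{m}_{i,p}$ into \eqref{Strauss} and get
\[
M\apice{m}_{i,p}=|u\apice{m}_p(s\apice{m}_{i,p})|\leq C_N\,\frac{\|\nabla u\apice{m}_p\|_{L^2(\R^N)}}{(s\apice{m}_{i,p})^{(N-1)/2}},
\]
i.e.
\[
(s\apice{m}_{i,p})^{(N-1)/2}\leq C_N\,\frac{\|\nabla u\apice{m}_p\|_{L^2(B)}}{M\apice{m}_{i,p}}.
\]
The numerator is bounded by Proposition \ref{PropositionUnicoMaxeMin}-(iii), while the denominator tends to $+\infty$ by \eqref{epsilonpmN3}. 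Since $N\geq 3$ gives $(N-1)/2>0$, we conclude $s\apice{m}_{i,p}\to 0$ as $p\to p_S$ for every $i=1,\ldots,m-1$.

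To pass from the maxima to the nodal radii, recall from \eqref{rp}--\eqref{sp} the ordering $r\apice{m}_{i,p}<s\apice{m}_{i,p}<r\apice{m}_{i+1,p}$ for $i=1,\ldots,m-1$. In particular $r\apice{m}_{i,p}<s\apice{m}_{i,p}$, so $r\apice{m}_{i,p}\to 0$ is immediate from what has just been shown. There is essentially no obstacle in this argument: the whole point is the combination of the Strauss pointwise bound with the energy boundedness and the pointwise blow-up, and both ingredients have already been established. The only minor issue to check is that the Strauss inequality is applied correctly, which is legitimate since $u\apice{m}_p\in H^1_0(B)$ extended by zero is a radial $H^1(\R^N)$ function.
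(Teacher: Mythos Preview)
Your proof is correct and follows essentially the same route as the paper: both combine the Strauss inequality \eqref{Strauss} with the boundedness of $\|\nabla u\apice{m}_p\|_{L^2}$ from Proposition \ref{PropositionUnicoMaxeMin}-(iii) and the blow-up $M\apice{m}_{i,p}\to+\infty$ from \eqref{epsilonpmN3}. The only cosmetic difference is that the paper first reduces to the case $i=m-1$ via the ordering $s\apice{m}_{i-1,p}<r\apice{m}_{i,p}<s\apice{m}_{i,p}$ and then argues by contradiction, whereas you rearrange the Strauss bound directly and treat all $i$ at once; your version is marginally more streamlined.
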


\begin{proof} Since $s\apice{m}_{i-1,p}<r\apice{m}_{i,p}<s\apice{m}_{i,p}$, it is enough to show the result in the case $i=m-1$. So setting $s_p:=s\apice{m}_{m-1,p}$, we want to prove that $s_p\rightarrow 0$ as $p\rightarrow p_S$. 
If by contradiction  $s_{p_n}\geq \alpha>0$  for a sequence $p_n\rightarrow p_S$ as $n\rightarrow +\infty$, then by  \eqref{Strauss} and Proposition  \ref{PropositionUnicoMaxeMin}-(iii) 
\[M\apice{m}_{m-1,p_n}=|u\apice{m}_p(s_{p_n})|\leq \frac{C_N}{|\alpha|^{\frac{N-1}{2}}}\|\nabla u\apice{m}_{p_n}\|_{L^2(\mathbb R^N)}\longrightarrow \frac{C_N}{|\alpha|^{\frac{N-1}{2}}} m S_N^{\frac{N}{2}}\ \mbox{ as }n\rightarrow +\infty.\]
So the sequence $(M\apice{m}_{m-1,p_n})_n$ would be bounded in contradiction with \eqref{epsilonpmN3}.
\end{proof}

\

The next propositions contain  crucial estimates for $|u\apice{m}_p|$ in each nodal region $ B\apice{m}_{i,p}$, $i=0,\ldots, m-1$.

\begin{proposition} \label{StimaFondamentalePositiva}
Let $m\in\N^+$, then
\begin{equation}\label{bound parte positivaFONDAMENTALE}
|u\apice{\emph m}_{p}(x)|\leq \frac{M\apice{\emph m}_{0,p} }{\left[ 1+ \frac{( M\apice{\emph m}_{0,p} )^{p-1}}{N (N-2)} |x|^2 \right]^{\frac{N-2}{2}}} \ \quad\forall\; x\in B\apice{\emph m}_{0,p}.
\end{equation}
where $B\apice{\emph m}_{0,p}\subset\R^N$ is as in \eqref{regioniNodaliupNotazione} and $M\apice{\emph m}_{0,p}>0$ as in \eqref{Mpm}.
\end{proposition}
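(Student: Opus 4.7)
The plan is to compare, in the innermost nodal region, $v(r) := u^m_p(|x|)$ with the Talenti-type profile $W(r)$ on the right-hand side of \eqref{bound parte positivaFONDAMENTALE}. Writing $M := M^m_{0,p}$, by Proposition~\ref{PropositionUnicoMaxeMin} the function $v$ is $C^2$, positive and strictly decreasing on $[0, r^m_{1,p}]$, with $v(0) = M$, $v'(0) = 0$, $v(r^m_{1,p}) = 0$, and satisfies the radial ODE $-v'' - \frac{N-1}{r}v' = v^p$. After the scaling $\tilde v(\rho) := M^{-1} v(\rho M^{(1-p)/2})$, which normalizes to $\tilde v(0) = 1$ and $-\Delta \tilde v = \tilde v^p$, the bound \eqref{bound parte positivaFONDAMENTALE} is equivalent to
\[
\tilde v(\rho) \le \phi(\rho) := \left(1 + \frac{\rho^2}{N(N-2)}\right)^{-(N-2)/2}
\]
on the rescaled first nodal region, where $\phi$ is the standard Talenti bubble solving $-\Delta \phi = \phi^{p_S}$ with $\phi(0) = 1$.

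To prove this I would work with the auxiliary radial function $G(\rho) := \tilde v(\rho)^{-2/(N-2)}$, which satisfies $G(0) = 1$, $G'(0) = 0$ and, by direct differentiation using the ODE,
\[
\Delta G(\rho) = \frac{2N}{(N-2)^2}\,\tilde v(\rho)^{-(2N-2)/(N-2)}\,(\tilde v'(\rho))^2 + \frac{2}{N-2}\,\tilde v(\rho)^{p - N/(N-2)}.
\]
The Talenti profile gives the exact polynomial $\bar G(\rho) := \phi(\rho)^{-2/(N-2)} = 1 + \rho^2/(N(N-2))$, with $\Delta \bar G \equiv 2/(N-2)$, so the sought bound becomes the inequality $G \ge \bar G$. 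The first step is a local comparison near the origin via Taylor expansion: one checks (using the radial ODE to compute the coefficients of $\tilde v$) that $G$ and $\bar G$ coincide to order $\rho^0$ and $\rho^2$, while the $\rho^4$-coefficient of $G - \bar G$ equals $(N+2-p(N-2))/[4N(N-2)^2(N+2)]$, which is strictly positive precisely because $p < p_S$. Hence $G(\rho) > \bar G(\rho)$ for $\rho > 0$ small.

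To propagate the inequality to the whole rescaled nodal region, I would argue by contradiction: if $\rho_*$ were the first zero of $G - \bar G$ with $\rho_* > 0$, then $G'(\rho_*) \le \bar G'(\rho_*)$, while the integrated forms of the two ODEs, namely $-\rho^{N-1}\tilde v'(\rho) = \int_0^\rho s^{N-1}\tilde v(s)^p\,ds$ and $-\rho^{N-1}\phi'(\rho) = \int_0^\rho s^{N-1}\phi(s)^{p_S}\,ds$, together with the inequalities $\tilde v \le \phi \le 1$ and $p < p_S$ on $(0,\rho_*)$, produce a strict inequality on $G'(\rho_*) - \bar G'(\rho_*)$ incompatible with a first crossing. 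The main obstacle is precisely this propagation step, because the standard subsolution/supersolution maximum principle fails here: the Talenti $\phi$ (equivalently $W$) satisfies $-\Delta W \le W^p$ in $\mathbb R^N$ with equality iff $p = p_S$, i.e.\ it is a \emph{subsolution} of the subcritical equation, so a direct maximum-principle comparison of $v$ and $W$ does not yield a contradiction. The role of the transformed quantity $G$ is to convert the subcritical condition $p < p_S$ into a usable differential inequality, which combined with the integrated form of the radial ODE extracts the correct sign.
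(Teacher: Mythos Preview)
Your setup and the local Taylor analysis are correct: the scaling, the formula for $\Delta G$, and the computation of the $\rho^4$-coefficient of $G-\bar G$ are all right, and this does give $G>\bar G$ near the origin. The approach, however, is different from the paper's, which follows Atkinson--Peletier: one passes to the Emden--Fowler variables $t=((N-2)/r)^{N-2}$, $y(t)=v(r)$, obtaining $y''+t^{-k}y^p=0$ with $k=2(N-1)/(N-2)$, and then uses the Lyapunov-type quantity
\[
L(t)=\tfrac{1}{2(k-1)}t^{1-k}y^{p+1}-\tfrac12 y'y+\tfrac12(y')^2t,
\]
whose derivative has the sign of $p(N-2)-(N+2)<0$; integrating twice from $t$ to $+\infty$ (the case $i=0$ of the proof of Proposition~\ref{StimaFondamentaleNegativa}, with $t_{2,p}=+\infty$ and $g(0)=\tfrac{N-2}{2}$) yields exactly \eqref{bound parte positivaFONDAMENTALE} with the sharp constant.

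Your propagation step, by contrast, has a genuine gap. At a first crossing $\rho_*$ you get $\tilde v(\rho_*)=\phi(\rho_*)$ and, from $(G-\bar G)'(\rho_*)\le 0$, the inequality $|\tilde v'(\rho_*)|\le|\phi'(\rho_*)|$, i.e.
\[
\int_0^{\rho_*}\! s^{N-1}\tilde v^p\,ds\ \le\ \int_0^{\rho_*}\! s^{N-1}\phi^{p_S}\,ds.
\]
To obtain a contradiction you would need the \emph{reverse} strict inequality. But from $\tilde v<\phi<1$ on $(0,\rho_*)$ and $p<p_S$ you only get $\tilde v^p<\phi^p$ and $\phi^{p_S}<\phi^p$: both integrands are below $\phi^p$, with no direct comparison between them, so the ``strict inequality on $G'(\rho_*)-\bar G'(\rho_*)$'' you invoke does not follow. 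The differential inequality $\Delta G\ge \tfrac{2}{N-2}\tilde v^{\,p-N/(N-2)}$ does not help either when $p>N/(N-2)$ (which is the relevant range near $p_S$), since then the right-hand side is $\le \tfrac{2}{N-2}=\Delta\bar G$. What is missing is precisely a monotone quantity playing the role of $L(t)$ above; your transformation $G=\tilde v^{-2/(N-2)}$ alone does not produce it. Either construct such a Lyapunov function in the $\rho$-variable, or switch to the Emden--Fowler frame where it is already available.
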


\begin{proof}
The ordinary differential equation satisfied by $u\apice{m}_p$ can be turned by a suitable change of variable into an Emden-Fowler equation. Then the proof can be derived adapting the arguments contained in the papers \cite{AtkPel1,AtkPel2} of Atkinson and Peletier, who dealt with the Brezis-Nirenberg problem. Since the proof of the next Proposition \ref{StimaFondamentaleNegativa} is similar but slightly more involved, we refer to it for the details.
\end{proof}

\

\

Next, if $u\apice{m}_p$ changes sign (i.e. $m\geq 2$) we can estimate $|u\apice{m}_p|$ in a similar way  in  suitable proper subsets   $C\apice{m}_{i, p}\subset B\apice{m}_{i,p}$, $i=1,\ldots,m-1$. 
As one can see from the statement below,
when $i=1,\dots, m-2$ ($m\geq 3$), we make the assumption \eqref{Rmi},
which will be shown in Corollary \ref{cor:RmiSatisfied} to be always satisfied.

\begin{proposition} \label{StimaFondamentaleNegativa} Let  $\alpha\in (0, \frac{N-2}{2})$, $m\in\N^+$, $m\geq 2$ and  $i\in\{1,\ldots, m-1\}$. If   $m\geq 3$ assume  that 
\begin{equation}
\tag{$\mathcal{R}^{m}_i$}\label{Rmi}
\frac{s\apice{\emph m}_{i,p}}{r\apice{\emph m}_{i+1,p}}\longrightarrow 0 \qquad \mbox{ as }
p\rightarrow p_S, \ \ \forall i\neq m-1.
\end{equation} Then there exists $\gamma=\gamma(\alpha,m)\in (0,1)$, $\gamma(\alpha,m)\rightarrow 1$ as $\alpha\rightarrow 0$  and $\delta_i=\delta_i(\alpha,m)\in (0,\frac{4}{N-2})$ such that for  $p\geq p_S-\delta_i$  we have
\begin{equation}\label{bound parte negativaFONDAMENTALE}
|u\apice{\emph{m}}_{p}(x)|\leq \frac{M\apice{\emph{m}}_{i,p} }{\left[ 1+ \frac{2\alpha}{N (N-2)^2 } ( M\apice{\emph{m}}_{i,p})^{p-1}|x|^2 \right]^{\frac{N-2}{2}}} \ \quad\forall\; x\in C\apice{\emph{m}}_{i,p},   
\end{equation}
where 
\[ C\apice{\emph{m}}_{i,p} := \left\{ x\in \mathbb R^N : \ \gamma^{-\frac{1}{N}}s\apice{\emph{m}}_{i,p} <|x|< r\apice{\emph{m}}_{i+1,p}\right\}\, (\subset B\apice{\emph{m}}_{i,p})
\]
and $M\apice{\emph{m}}_{i,p}>0$ is defined in \eqref{Mpm}.
\end{proposition}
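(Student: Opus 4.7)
The plan is to adapt the Emden-Fowler comparison of Atkinson-Peletier \cite{AtkPel1,AtkPel2} used for Proposition \ref{StimaFondamentalePositiva}, but now starting the comparison at the interior maximum $s_{i,p}^{m}$ of $|u_p^{m}|$ rather than at the origin. On the interval $[s_{i,p}^{m},r_{i+1,p}^{m}]$ the function $|u_p^{m}|$ satisfies the same radial ODE as a positive solution, with data $|u_p^{m}|(s_{i,p}^{m})=M_{i,p}^{m}$, $(|u_p^{m}|)'(s_{i,p}^{m})=0$, $|u_p^{m}|(r_{i+1,p}^{m})=0$. The Emden-Fowler change of variable $\tau=r^{2-N}/(N-2)$, $w(\tau)=|u_p^{m}(r)|$, turns the equation into a second-order ODE with a monotone weight, while the target envelope
\[
W_\alpha(r):=\frac{M_{i,p}^{m}}{\bigl[1+\tfrac{2\alpha}{N(N-2)^{2}}(M_{i,p}^{m})^{p-1}r^{2}\bigr]^{(N-2)/2}}
\]
is an explicit Talenti-like radial function which, in the same variable, solves an ODE of the same form whose coefficient, after scaling by $M_{i,p}^{m}$ and $(M_{i,p}^{m})^{(p-1)/2}$, differs from the coefficient of the equation for $w$ by a factor involving $\tfrac{2\alpha}{N-2}$ and $(M_{i,p}^{m})^{p-p_S}$.

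The heart of the argument is a Sturmian / maximum-principle comparison in Emden-Fowler variables giving $w\le W_\alpha$ on $C_{i,p}^{m}$. The strict inequality $\alpha<\tfrac{N-2}{2}$, combined with $(M_{i,p}^{m})^{p-p_S}\to 1$ as $p\to p_S$ (from \eqref{epsilonpmN3}, after shrinking $\delta_i$ appropriately), produces the gap between the two coefficients needed to run the comparison. At the outer endpoint $r=r_{i+1,p}^{m}$ the inequality $|u_p^{m}|\le W_\alpha$ is automatic, since $|u_p^{m}|$ vanishes and $W_\alpha$ stays strictly positive; so it suffices to check the comparison at the inner endpoint $r=\gamma^{-1/N}s_{i,p}^{m}$. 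The assumption \eqref{Rmi} enters precisely here: it guarantees that the relevant interval in Emden-Fowler variables does not collapse as $p\to p_S$, which is what makes the comparison meaningful all the way out to $r_{i+1,p}^{m}$.

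What remains, and constitutes the main obstacle, is the initial comparison at the inner endpoint $r=\gamma^{-1/N}s_{i,p}^{m}$, slightly to the right of the maximum. A Taylor expansion of $|u_p^{m}|$ around $s_{i,p}^{m}$ — with $(|u_p^{m}|)''(s_{i,p}^{m})=-(M_{i,p}^{m})^{p}$ read off from the ODE — allows one to bound $|u_p^{m}|(\gamma^{-1/N}s_{i,p}^{m})$ from above by $W_\alpha(\gamma^{-1/N}s_{i,p}^{m})$ once $\gamma=\gamma(\alpha,m)\in(0,1)$ is close enough to $1$. The condition $\gamma\to 1$ as $\alpha\to 0$ mirrors the fact that smaller $\alpha$ yields slower decay of the Talenti profile, so that the exclusion zone around $s_{i,p}^{m}$ can be made smaller. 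All of this bookkeeping has to be uniform in $p\ge p_S-\delta_i$, and the necessity of sacrificing a fraction of the sharp Talenti constant in exchange for a clean pointwise bound valid uniformly up to $p_S$ is exactly the hallmark of the Atkinson-Peletier method adapted to this nodal setting.
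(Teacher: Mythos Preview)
Your outline is a plausible-sounding strategy, but it diverges from the paper's actual argument and, more importantly, contains a genuine circularity.

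\textbf{The gap.} Your ``initial comparison at the inner endpoint'' via a Taylor expansion of $|u_p^m|$ around $s_{i,p}^m$ requires control on the dimensionless quantity $(M_{i,p}^m)^{p-1}(s_{i,p}^m)^2$. Indeed, at $r=\gamma^{-1/N}s_{i,p}^m$ the envelope satisfies
\[
W_\alpha(r)=\frac{M_{i,p}^m}{\bigl[1+\tfrac{2\alpha}{N(N-2)^2}(M_{i,p}^m)^{p-1}(s_{i,p}^m)^2\gamma^{-2/N}\bigr]^{(N-2)/2}},
\]
and if $(M_{i,p}^m)^{p-1}(s_{i,p}^m)^2$ is not known to be small (or at least bounded) you cannot force $|u_p^m|(r)\le W_\alpha(r)$ by choosing $\gamma$ close to $1$; the right-hand side can be arbitrarily small while $|u_p^m|$ may still be close to $M_{i,p}^m$. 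But $(M_{i,p}^m)^{p-1}(s_{i,p}^m)^2\to 0$ is exactly property $(\mathcal B_i^m)$, which in the paper is established \emph{after} this proposition (Propositions~\ref{prop:Bm-1ValePerOgnim} and~\ref{prop:BmiValePerOgnim}) and in fact \emph{uses} the case $i=m-1$ of this bound in its proof. So invoking it here is circular. Separately, your claim that $W_\alpha$ ``solves an ODE of the same form'' with a comparable coefficient is not substantiated; $W_\alpha$ is not a sub/supersolution of $-\Delta v=|v|^{p-1}v$ in any obvious sense, and a naked Sturmian comparison does not directly give pointwise ordering between two positive functions.

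\textbf{What the paper actually does.} The paper's proof avoids any endpoint comparison with $W_\alpha$. In the Emden--Fowler variable $t=((N-2)/r)^{N-2}$, $y_p(t)=u_{i,p}^m(r)$, one first proves the differential inequality
\[
\bigl(y_p'\,t^{k-1}y_p^{1-k}\bigr)'+t^{k-2}y_p^{-k}t_{2,p}^{1-k}M_{i,p}^{p+1}\le 0,\qquad k=\tfrac{2(N-1)}{N-2},
\]
via a Pohozaev-type identity (showing an auxiliary function $L_p(t)\ge 0$, using only $p<p_S$). Integrating this twice between $t$ and $t_{2,p}=((N-2)/s_{i,p}^m)^{N-2}$ produces
\[
y_p(t)^{2-k}-M_{i,p}^{\,2-k}\ \ge\ \tfrac{k-2}{k-1}\,M_{i,p}^{\,p+1-k}\,t^{2-k}\,g\!\left(\left(\tfrac{t}{t_{2,p}}\right)^{k-1}\right),
\]
where $g(s)=\tfrac{1}{k-2}+s-\tfrac{k-1}{k-2}s^{(k-2)/(k-1)}$ is a fixed function with $g(0)=\tfrac{N-2}{2}$, $g(1)=0$, $g'<0$ on $(0,1)$. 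The number $\gamma=\gamma(\alpha)$ is \emph{defined} by $g(\gamma)=\alpha$; then $g>\alpha$ on $[0,\gamma)$, and the inequality above with $g$ replaced by $\alpha$ rearranges directly into \eqref{bound parte negativaFONDAMENTALE}. Thus $\gamma$ comes from an explicit algebraic threshold, not from a Taylor matching, and no a priori smallness of $s_{i,p}^m(M_{i,p}^m)^{(p-1)/2}$ is needed. The hypothesis \eqref{Rmi} (or Proposition~\ref{Proposition:spVaAZero} when $i=m-1$) is used only to guarantee that the range $t_{1,p}<\gamma^{(N-2)/N}t_{2,p}$ is nonempty, i.e.\ that $C_{i,p}^m\neq\emptyset$.
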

\begin{proof}
We argue as in \cite{Iacopetti}. Since $u_p$ is a radial solution to \eqref{problem} and $s_{i,p}$ is a critical point for it then $u_{i,p}=|u_p|\chi_{B_{i,p}}$ satisfies in particular
\begin{equation}
\left\{
\begin{array}{lr}
u_{i,p}''(r)+ \frac{N-1}{r} u_{i,p}'(r)+(u_{i,p}(r))^p=0\ & \ r \in (s_{i,p}, r_{i+1,p})\\
u_{i,p}'(s_{i,p})=0\\
u_{i,p}(r_{i+1,p})=0\\
u_{i,p}(s_{i,p})=M_{i,p}
\end{array}
\right.
\end{equation}
Let \[t:=\left(\frac{N-2}{r}\right)^{N-2}\]
and
\[y_p(t):=u_{i,p}\left(\frac{N-2}{t^{\frac{1}{N-2}}}\right)\]
then $y_p$ satisfies an Emden-Fowler type ordinary differential equation:
\begin{equation}
\left\{
\begin{array}{lr}
y_p''(t)+ t^{-k}(y_p(t))^p=0,\ & \ t \in (t_{1,p}, t_{2,p})\\
y_p'(t_{2,p})=0\\
y_p(t_{1,p})=0\\
y_p(t_{2,p})=M_{i,p}
\end{array}
\right.
\end{equation}
where
$k:=2\frac{N-1}{N-2}$, $t_{1,p}:= \left(\frac{N-2}{r_{i+1,p}}\right)^{N-2}$, $t_{2,p}:= \left( \frac{N-2}{s_{i,p}}\right)^{N-2}$ 
(notice that $y_p$, $t_{1,p}$ and $t_{2,p}$ depend also on $i$ but we have omitted it in the notations for simplicity).

\

\

\emph{STEP 1. We show that
\begin{equation}\label{deri} \left(y_p't^{k-1}y_p^{1-k}\right)' + t^{k-2}y_p^{-k} t_{2,p}^{1-k}(y_p(t_{2,p}))^{p+1}\leq 0, \mbox{ for all }\ t\in (t_{1,p}, t_{2,p})
\end{equation} }

\

\emph{Proof of STEP 1.} We differentiate $y_p't^{k-1}y_p^{1-k}$ and using $y_p''+ t^{-k}y_p^p=0$ we get
\begin{eqnarray*}
 \left(y_p't^{k-1}y_p^{1-k}\right)' & = &
 y_p''t^{k-1}y_p^{1-k}+y_p' (k-1) t^{k-2}y_p^{1-k}- (k-1)(y_p')^2
t^{k-1}y_p^{-k}
\\
&=& -t^{-1}y_p^{p+1-k}+y_p' (k-1) t^{k-2}y_p^{1-k}- (k-1)(y_p')^2
t^{k-1}y_p^{-k}
\\
&=& -2(k-1)t^{k-2}y_p^{-k}\left(\frac{1}{2(k-1)} t^{1-k}y_p^{p+1}  - \frac{1}{2} y_p'  y_p + \frac{1}{2}(y_p')^2
t\right)
\end{eqnarray*}
Adding and subtracting $t^{k-2}y_p^{-k}t_{2,p}^{1-k}(y_p(t_{2,p}))^{p+1}$ we deduce
\[
 \left(y_p't^{k-1}y_p^{1-k}\right)' + t^{k-2}y_p^{-k}t_{2,p}^{1-k}(y_p(t_{2,p}))^{p+1} =  -2 (k-1)t^{k-2}y_p^{-k} L_p(t) 
\]
where
\[L_p(t):= \frac{1}{2(k-1)} t^{1-k}y_p^{p+1}  - \frac{1}{2} y_p'  y_p + \frac{1}{2}(y_p')^2
t- \frac{1}{2(k-1)}t_{2,p}^{1-k}(y_p(t_{2,p}))^{p+1}\]
Hence \eqref{deri} is proved if we show that 
\begin{equation}\label{Lppositiva}
L_p(t)\geq 0\ \mbox{ for all }\  t\in  (t_{1,p}, t_{2,p}),
\end{equation}
which follows just observing that by definition $L_p(t_{2,p})=0$ and that $L_p'(t)\leq 0$ for $ t\in  (t_{1,p}, t_{2,p})$. Indeed by easy computations
\[L_p'(t)=\frac{p(N-2)-(N+2)}{2N}t^{1-k}y_p'(t)(y_p(t))^p\]
where $\frac{p(N-2)-(N+2)}{2N}<0$ (since $p<p_S$), $y_p(t)>0$ and $y_p'(t)\geq 0$ for $ t\in  (t_{1,p}, t_{2,p})$ (because $(u_{i,p})'(s)\leq 0$ for $s\in (s_{i,p},r_{i+1,p})$).

\

\

\emph{STEP 2. We show that for any $\alpha\in (0,\frac{N-2}{2})$ there exist $\gamma=\gamma(\alpha)\in (0,1)$, $\delta_i=\delta_i(\alpha)>0$ such that
\begin{equation}
\label{stimasuyp}
y_p(t)\leq  M_{i,p}\left[ 1+ \frac{2}{N}(M_{i,p})^{p-1}t^{-\frac{2}{N-2}}\alpha \right]^{-\frac{N-2}{2}}, \  \mbox{ for }t \in (t_{1,p},\gamma^{\frac{N-2}{N}}t_{2,p}),\ p_S-p<\delta_i.
\end{equation}
}

\

\emph{Proof of STEP 2.} We integrate \eqref{deri} between $t$ and $t_{2,p}$ for all $t\in (t_{1,p}, t_{2,p})$. Since  $y_p'(t_{2,p})=0$ and $y_p(t_{2,p})=M_{i,p}$ we get
\[  y_p'(t)t^{k-1}y_p(t)^{1-k}\  \geq\  t_{2,p}^{1-k}(M_{i,p})^{p+1} \int_t^{t_{2,p}} s^{k-2}y_p(s)^{-k}ds \ \ \mbox{ for all }\ t\in (t_{1,p}, t_{2,p}).
\]
Since  $u_{i,p}\leq M_{i,p}$ by definition, it follows $y_p^{-k}\geq (M_{i,p})^{-k}$, so
\begin{eqnarray*} 
 y_p'(t)t^{k-1}y_p(t)^{1-k} &\geq &  t_{2,p}^{1-k}(M_{i,p})^{p+1-k} \int_t^{t_{2,p}} s^{k-2}ds
 \\
&= &  \frac{(M_{i,p})^{p+1-k}}{k-1}\left(1-\left( \frac{t}{t_{2,p}} \right)^{k-1}\right).
\end{eqnarray*}
Multiplying both side by $t^{1-k}$ we get
\[ 
 \frac{1}{2-k}(y_p(t)^{2-k})'=y_p'(t)y_p(t)^{1-k}\ \geq \ \frac{(M_{i,p})^{p+1-k}}{k-1}\left(t^{1-k}-\left( \frac{1}{t_{2,p}} \right)^{k-1}\right).
\]
Integrating between $t$ and $t_{2,p}$ and recalling that $y_p(t_{2,p})=M_{i,p}$, we have
\begin{eqnarray}
\frac{y_p(t)^{2-k}}{k-2}- \frac{(M_{i,p})^{2-k}}{k-2}& \geq & \frac{(M_{i,p})^{p+1-k}}{k-1}\left(-\frac{t_{2,p}^{2-k}}{k-2}+\frac{t^{2-k}}{k-2}- \frac{1}{t_{2,p}^{k-2}}+\frac{t}{t_{2,p}^{k-1}}\right)
\nonumber
\\
&= & \frac{(M_{i,p})^{p+1-k}}{k-1} t^{2-k}g\left(\left(\frac{t}{t_{2,p}}\right)^{k-1}\right),\label{inte}
\end{eqnarray}
where 
\[g(s):= \frac{1}{k-2}+s - \frac{k-1}{k-2}s^{\frac{k-2}{k-1}}, \ \ s \in [0,1]. \]
Observe that  
\begin{eqnarray*}
&& g(0) = \frac{1}{k-2}=\frac{N-2}{2}>0\\
&& g(1)  =  0\\
&& g'(s) = 1-s^{-\frac{1}{k-1}}<0 \ \mbox{ in}\  (0,1).
\end{eqnarray*}
so  $g(s)>0$ for all $s\in (0,1)$. Moreover, if for any $\alpha\in (0, \frac{N-2}{2})$ there exists only one $\gamma=\gamma(\alpha)\in (0,1)$ such that $g(\gamma)=\alpha$, $g(s)>\alpha$ for all $s\in [0,\gamma)$ and $\gamma\rightarrow 1$ as $\alpha\rightarrow 0$.
\\
Now remembering that in \eqref{inte} $s:=\left(\frac{t}{t_{2,p}}\right)^{k-1}$, it follows that
$s<\gamma$ if and only if $ t<\gamma^{\frac{1}{k-1}}t_{2,p}$. Let us observe that  $ t_{1,p}<\gamma^{\frac{1}{k-1}}t_{2,p}$ if and only if \[s_{i,p}^{N-2}<\gamma^{\frac{1}{k-1}} r_{i+1,p},\] which holds true, for any fixed $i\in\{1,\ldots, m-1\}$, if $p_S-p<\delta_i$, for some number $\delta_i(\gamma)>0$. In fact in the case $i=m-1$  we have, by definition, that $r_{i+1,p}\equiv 1$ so that the inequality follows directly from Proposition \ref{Proposition:spVaAZero}, while when $i\neq m-1$ it follows by the  assumption \eqref{Rmi}.\\
Hence from \eqref{inte} we have
\[
y_p(t)^{2-k}- (M_{i,p})^{2-k} \geq  
\frac{(M_{i,p})^{p+1-k}(k-2)}{k-1} t^{2-k} \alpha,\ \ \mbox{ for }t \in (t_{1,p},\gamma^{\frac{1}{k-1}}t_{2,p}),\ p_S-p<\delta_i
\]
which gives \eqref{stimasuyp}.

\

\

\emph{STEP 3. Estimate for $u_{i,p}$.}

\

\emph{Proof of STEP 3.} By definition we have $y_p(t)=u_{i,p}\left(\frac{N-2}{t^{\frac{1}{N-2}}}\right)$, so
by \eqref{stimasuyp} 
\[
u_{i,p}\left(\frac{N-2}{t^{\frac{1}{N-2}}}\right)\leq  M_{i,p}\left[ 1+ \frac{2}{N}(M_{i,p})^{p-1}t^{-\frac{2}{N-2}}\alpha \right]^{-\frac{N-2}{2}} 
\]
for $t \in (t_{1,p},\gamma^{\frac{N-2}{N}}t_{2,p})$, $p_S-p<\delta_i$. The conclusion follows for $|x|=r:=\frac{N-2}{t^{\frac{1}{N-2}}}$.
\end{proof}

\

\

We consider now, for $m\in\N^+$, the  $ m$ \emph{tail sets}
\begin{equation}\label{C1pm}
T\apice{m}_{i,p}:=\bigcup_{j=i}^{m-1} B\apice{m}_{j,p},\qquad i=0, \ldots, m-1
\end{equation}
where $B\apice{m}_{i,p}$ are the \emph{nodal regions} of $u\apice{m}_p$ defined in \eqref{regioniNodaliupNotazione} (observe that $T\apice{m}_{0,p}=B$, $T\apice{m}_{1,p}=B\setminus B\apice{m}_{0,p} $, \ldots, $T\apice{m}_{m-1,p}=B\apice{m}_{m-1,p}$).
We define the $m$ rescaled functions
\begin{eqnarray}\label{zeta}
&&z\apice{m}_{i,p}(x):=\frac{1}{M\apice{m}_{i,p}}\ u\apice{m}_p\Big(\frac{|x|}{(M\apice{m}_{i,p})^{\frac{p-1}{2}}}\Big), \quad x\in \widetilde{T}\apice{m}_{i,p}:=(M\apice{m}_{i,p})^{\frac{p-1}{2}}T\apice{m}_{i,p},\\\nonumber  && i=0, \ldots, m-1
\end{eqnarray}
 which are radial, solve 
\begin{equation}\label{equazionizp}
\left\{
\begin{array}{lr}
-\Delta z\apice{m}_{i,p}=|z\apice{m}_{i,p})|^{p-1}z\apice{m}_{i,p} &\ \mbox{ in }\widetilde{T}\apice{m}_{i,p}\\
z\apice{m}_{i,p}=0 &\mbox{ on }\partial( \widetilde{T}\apice{m}_{i,p})\\
z\apice{m}_{i,p}(s\apice{m}_{i,p})=1\  \mbox{ and }\ (z\apice{m}_{i,p})'(s\apice{m}_{i,p})=0
\end{array}
\right.
\end{equation}
and moreover, by the assumption \eqref{MaxInZero}, satisfy
\begin{equation}\label{zippositive}(-1)^iz\apice{m}_{i,p}>0  \quad \mbox{ in }\widetilde{B}\apice{m}_{i,p}:=(M\apice{m}_{i,p})^{\frac{p-1}{2}}B\apice{m}_{i,p}.
\end{equation}
 The main result of this section consists in proving that they all converge, up to the sign,  to the same function
\begin{equation}\label{UNgeq3}
U(x):=\left( \frac{N(N-2)}{N(N-2)+|x|^2} \right)^{\frac{N-2}{2}},
\end{equation}
which is the unique positive bounded radial solution to the critical equation in $\R^N$:
\begin{equation}
\label{criticalLimitEquation}
\left\{
\begin{array}{lr}
-\Delta U=U^{p_S}\ \mbox{ in }\mathbb R^N\\
U(0)=1
\end{array}
\right.
\end{equation}
and satisfies
\begin{equation}\label{normaU}
\int_{\R^N} |\nabla U|^2dx = \int_{\R^N} U^{2^*}dx= S_N^{\frac{N}{2}}.
\end{equation}

\

Precisely we show the following:
\begin{theorem}\label{theorem:analisiAsintoticaCasoNgeq3}
Let $m\in\N^+$. We have, as  $p\rightarrow p_S$:
\begin{eqnarray}
&&z\apice{\emph m}_{0,p}\longrightarrow U\quad\mbox{in $C^2_{loc}(\R^N)$},
\label{zppiuNg}
\\
&& (-1)^i z\apice{\emph m}_{i,p} \longrightarrow U\quad\mbox{in $C^2_{loc}(\R^N\setminus\{0\}),\  \forall\, i=1,\ldots, m-1\:$ (if $m\geq 2$).}
\label{zpmenoNg}
\end{eqnarray}
\end{theorem}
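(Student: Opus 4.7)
The strategy combines the pointwise estimates of Propositions \ref{StimaFondamentalePositiva} and \ref{StimaFondamentaleNegativa} with interior elliptic regularity and the Caffarelli-Gidas-Spruck classification of positive radial solutions of \eqref{criticalLimitEquation}. As a preliminary I would verify the hypothesis \eqref{Rmi}: by Lemma \ref{lemma:legami} applied with $h=i+1$ one has $s\apice{m}_{i,p}/r\apice{m}_{i+1,p} = s\apice{i+1}_{i,p}$, and the right-hand side tends to zero by Proposition \ref{Proposition:spVaAZero}; this is precisely the content of the forward-referenced Corollary \ref{cor:RmiSatisfied} and unlocks Proposition \ref{StimaFondamentaleNegativa} for every $i\in\{1,\ldots,m-1\}$.

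For \eqref{zppiuNg} (case $i=0$), the substitution $y = |x|/(M\apice{m}_{0,p})^{(p-1)/2}$ in \eqref{bound parte positivaFONDAMENTALE} yields the clean inequality $|z\apice{m}_{0,p}(x)|\leq U(x)$ on the rescaled ball $\widetilde B\apice{m}_{0,p}$. Coupled with the equation $-\Delta z\apice{m}_{0,p} = |z\apice{m}_{0,p}|^{p-1}z\apice{m}_{0,p}$ and interior $W^{2,q}$ estimates, this produces $C^2_{loc}$ precompactness. One has to check in addition that $(M\apice{m}_{0,p})^{(p-1)/2}r\apice{m}_{1,p}\to +\infty$, so that any limit equation is posed on the whole of $\R^N$: otherwise a subsequential limit would be a positive radial solution of the critical equation on a bounded ball with zero Dirichlet data, which is ruled out by Pohozaev's identity. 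Any limit is then a nonnegative radial solution of \eqref{criticalLimitEquation} with value $1$ at the origin, hence equals $U$ by Caffarelli-Gidas-Spruck, and uniqueness of the profile upgrades subsequential convergence to convergence of the whole family.

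For \eqref{zpmenoNg} (case $i\geq 1$) the argument is parallel but now $\widetilde T\apice{m}_{i,p}$ is an annulus, and its inner radius, outer radius and the rescaled location of the peak must all be controlled. Using \eqref{TerzoLegame}-\eqref{QuartoLegame} with $h=i+1$ one sees that the outer scaled radius equals $(M\apice{m}_{i,p})^{(p-1)/2}r\apice{m}_{i+1,p} = (M\apice{i+1}_{i,p})^{(p-1)/2}$, which diverges by \eqref{epsilonpmN3}, while the inner scaled radius and the scaled peak location rewrite as $(M\apice{i+1}_{i,p})^{(p-1)/2}r\apice{i+1}_{i,p}$ and $(M\apice{i+1}_{i,p})^{(p-1)/2}s\apice{i+1}_{i,p}$. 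After the same rescaling, Proposition \ref{StimaFondamentaleNegativa} supplies a bubble-type bound on the image of $C\apice{m}_{i,p}$, which combined with interior regularity yields $C^2_{loc}(\R^N\setminus\{0\})$ compactness. Any radial subsequential limit $v$ solves the critical equation on $\R^N\setminus\{0\}$; by the standard removable singularity theorem it extends to a positive radial solution on $\R^N$, hence by Caffarelli-Gidas-Spruck it is of the form $U_\lambda$. Since every radial bubble peaks at the origin while $|z\apice{m}_{i,p}|$ attains its maximum $1$ at radius $(M\apice{m}_{i,p})^{(p-1)/2}s\apice{m}_{i,p}$, this forces both the peak location and the inner scaled radius to tend to $0$ and pins down $\lambda = 1$, so that $v=U$; the sign $(-1)^i$ of the limit is built in by \eqref{zippositive}.

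The main obstacle lies in case $i\geq 1$: one cannot postulate a priori that the peaks concentrate at the origin in the rescaled picture, nor that the annular hole collapses, so the argument has to be run in a slightly circular but self-consistent way, extracting the $C^2_{loc}(\R^N\setminus\{0\})$ limit first and then using the rigidity of radial positive solutions of \eqref{criticalLimitEquation} (bubbles peak only at the origin) to rule out the bad alternatives — a bubble peaking away from $0$ or a nontrivial solution on a punctured domain. Once these scales are controlled, passage to the limit and identification with $U$ are routine, and uniqueness of the profile upgrades subsequential convergence to convergence of the whole family $p\to p_S$.
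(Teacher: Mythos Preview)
Your overall strategy --- uniform bounds, elliptic compactness, classification of the limit --- matches the paper's, and your treatment of the case $i=0$ as well as your direct verification of \eqref{Rmi} via Lemma~\ref{lemma:legami} are fine (the latter is in fact slicker than the paper's route through $(\mathcal A\apice{m}_{i+1})$ and $(\mathcal B\apice{m}_i)$ in Corollary~\ref{cor:RmiSatisfied}).

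The genuine gap is in the case $i\geq 1$, precisely at the point you yourself flag as the ``main obstacle''. Your proposed resolution --- extract a $C^2_{loc}(\R^N\setminus\{0\})$ limit, classify it as a bubble, then read off that the rescaled peak and inner radius must vanish --- is circular in a way that does not close. If along a subsequence the inner scaled radius $\rho_p:=(M\apice{m}_{i,p})^{(p-1)/2}r\apice{m}_{i,p}$ converges to some $\rho>0$, the limit lives on the exterior domain $\{|x|>\rho\}$ with zero Dirichlet data on $|x|=\rho$, and the Caffarelli--Gidas--Spruck classification says nothing about such solutions; the removable-singularity theorem does not apply to a hole of positive radius. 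If instead the scaled peak $\sigma_p:=(M\apice{m}_{i,p})^{(p-1)/2}s\apice{m}_{i,p}\to+\infty$, there is no limiting profile on any fixed punctured ball to classify. The paper disposes of exactly these alternatives in Proposition~\ref{prop:Bm-1ValePerOgnim} by a careful case analysis that your sketch does not reproduce: an energy blow-up (Fatou plus \eqref{limiteMezzaNormap}) rules out $\sigma_p\to+\infty$; when $\rho>0$ one uses the pointwise bound \eqref{bound parte negativaFONDAMENTALE} and dominated convergence to show that the limit $z\in H^1_0(\{|x|>\rho\})$ would achieve the best Sobolev constant $S_N$ on a proper subdomain of $\R^N$, which is impossible; and when $\rho=0$ but $\sigma_p\to s_0>0$ one shows the limit has $z'(r)\sim r^{1-N}$ near the origin and hence infinite Dirichlet energy. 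None of these mechanisms is ``bubbles peak only at the origin''. Once Proposition~\ref{prop:Bm-1ValePerOgnim} delivers $(\mathcal B\apice{m}_{m-1})$, Lemma~\ref{lemma:legami} transfers it to every $i$ (Proposition~\ref{prop:BmiValePerOgnim}), and only then does your compactness-plus-classification argument run cleanly, as in Proposition~\ref{Propos:zModificata}.
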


\

As we will see, in order to prove Theorem \ref{theorem:analisiAsintoticaCasoNgeq3} it is enough to scale each nodal region $B\apice{m}_{i,p}$ as
\begin{equation}\label{Btildeip}\widetilde{B}\apice{m}_{i,p}:=(M\apice{m}_{i,p})^{\frac{p-1}{2}}B\apice{m}_{i,p},\ \  i=0, \ldots, m-1
\end{equation} 
and show that the same result holds for the restriction of $z\apice{m}_{i,p}$ to the set $\widetilde{B}\apice{m}_{i,p}$, $i=0, \ldots, m-1$ (see Proposition \ref{Propos:zModificata} ahead).
We point out that the study of the  rescaled functions $z\apice{m}_{i,p}\chi_{\widetilde{B}\apice{m}_{i,p}}$, $i=1,\ldots, m-1,$ is more delicate as compared to the study of the first rescaled function $z\apice{m}_{0,p}\chi_{\widetilde{B}\apice{m}_{0,p}}$. The main reason is that the radius $s\apice{m}_{i,p}$, where the maximum of $|u\apice{m}_p|=|u\apice{m}_p(r)|$ is achieved  in the nodal region $B\apice{m}_{i,p}$, depends on $p$ when $i\neq 0$, while  $s\apice{m}_{0,p}\equiv 0$, for any $p$.

Moreover let us observe that also the nodal radii $r\apice{m}_{i,p}$ depend on $p$. When $i=1,\dots, m-1$ we know by Proposition \ref{Proposition:spVaAZero} that both $r\apice{m}_{i,p}$ and  $s\apice{m}_{i,p}$ converge to zero as $p\rightarrow p_S$ and, before proving Theorem \ref{theorem:analisiAsintoticaCasoNgeq3},  we need to get precise information about their rate of convergence. In particular in order to determine the limit problem we need to understand how $s\apice{m}_{i,p}$ and $r\apice{m}_{i,p}$ behave with respect to the rescaling parameters $(M\apice{m}_{i,p})^{\frac{p-1}{2}}$.

\

To this aim for $m\in\N^+$, $m\geq 2$ and $i=1,\ldots, m-1$, let us define the following properties:
\begin{align}
\tag{$\mathcal{A}^{m}_i$}\label{Ami}
r\apice{m}_{i,p} ( M\apice{m}_{i-1,p} )^{\frac{p-1}{2}}\longrightarrow +\infty \qquad \mbox{ as }
p\rightarrow p_S
\\
\tag{$\mathcal{B}^{m}_i$}\label{Bmi}
s\apice{m}_{i,p} ( M\apice{m}_{i,p} )^{\frac{p-1}{2}}\longrightarrow 0 \qquad \mbox{ as }
p\rightarrow p_S.
\end{align}
Clearly  \eqref{Bmi} implies
\begin{equation}
\tag{$\mathcal{C}^m_i$}\label{Cmi}
r\apice{m}_{i,p} ( M\apice{m}_{i,p} )^{\frac{p-1}{2}}\longrightarrow 0 \qquad \mbox{ as }
p\rightarrow p_S.
\end{equation}

\

We can easily prove that the first property holds, indeed we have:
\begin{proposition}\label{proposition:AmiValePerOgnim} 
Let $m\in\N^+$, $m\geq 2$. Then
\[(\mathcal{A}\apice{\emph m}_{i}) \mbox{ holds true }\qquad  \mbox{  for any }\  i=1,\ldots, m-1.\]
\end{proposition}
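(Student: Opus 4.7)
The plan is to deduce $(\mathcal{A}^m_i)$ as an immediate consequence of the scaling identity \eqref{TerzoLegame} in Lemma \ref{lemma:legami} combined with the blow-up of the maxima already established in \eqref{epsilonpmN3}. In other words, I would exploit the fact that rescaling $u\apice{m}_p$ to its first $i$ nodal regions produces exactly $u\apice{i}_p$, so that quantities involving the radius $r\apice{m}_{i,p}$ translate into quantities intrinsic to $u\apice{i}_p$ for which the needed information is already available.

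More concretely, I fix $i\in\{1,\ldots,m-1\}$ and apply \eqref{TerzoLegame} with the choice $h=i$ and $j=i-1$ (note $0\le j\le h-1$, so the indices are admissible). This gives precisely
\[
(M\apice{i}_{i-1,p})^{\frac{p-1}{2}} \;=\; r\apice{m}_{i,p}\, (M\apice{m}_{i-1,p})^{\frac{p-1}{2}},
\]
which rewrites the quantity appearing in $(\mathcal{A}^m_i)$ purely in terms of a maximum of the $i$-nodal solution $u\apice{i}_p$. Hence $(\mathcal{A}^m_i)$ is equivalent to showing $(M\apice{i}_{i-1,p})^{\frac{p-1}{2}}\to +\infty$ as $p\to p_S$.

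To finish, I would apply Proposition \ref{propositionComportamRegioNod} to the radial solution $u\apice{i}_p$: by \eqref{epsilonpmN3}, each of its nodal maxima blows up, so in particular $M\apice{i}_{i-1,p}\to +\infty$. Since $\frac{p-1}{2}\to\frac{p_S-1}{2}=\frac{2}{N-2}>0$, the exponent stays bounded away from $0$ and the conclusion $(M\apice{i}_{i-1,p})^{\frac{p-1}{2}}\to +\infty$ follows. There is essentially no obstacle in this argument: the only nontrivial step is recognizing that the scaling lemma already does all the work, turning a statement about outer nodal radii of $u\apice{m}_p$ into a plain blow-up statement for $u\apice{i}_p$.
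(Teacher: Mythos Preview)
Your proposal is correct and follows essentially the same approach as the paper: apply \eqref{TerzoLegame} with $h=i$, $j=i-1$ to rewrite $r\apice{m}_{i,p}(M\apice{m}_{i-1,p})^{\frac{p-1}{2}}$ as $(M\apice{i}_{i-1,p})^{\frac{p-1}{2}}$, then invoke \eqref{epsilonpmN3} for the solution $u\apice{i}_p$. Your extra remark on the exponent $\frac{p-1}{2}$ staying bounded away from zero is a harmless clarification the paper leaves implicit.
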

\begin{proof}
Let $i\in\{1,\ldots, m-1\}$, we want to show that
\[
r\apice{m}_{i,p} ( M\apice{m}_{i-1,p} )^{\frac{p-1}{2}}\longrightarrow +\infty \qquad \mbox{ as }
p\rightarrow p_S.
\]
This follows directly from Lemma \ref{lemma:legami} and Proposition \ref{propositionComportamRegioNod}. Indeed, choosing $h:=i$ and $j:=i-1$ into \eqref{TerzoLegame}  and using \eqref{epsilonpmN3},  we get:
\[r\apice{m}_{i,p}( M\apice{m}_{i-1,p})^{\frac{p-1}{2}} \overset{ \eqref{TerzoLegame}}{=}  (M\apice{i}_{i-1,p})^{\frac{p-1}{2}} \overset{\eqref{epsilonpmN3}}{\longrightarrow} +\infty\quad  \mbox{ as $p\rightarrow p_S$}.
\]
\end{proof}

\

Property \eqref{Bmi} is more difficult to be obtained. First we prove it for $i=m-1$ (Proposition \ref{prop:Bm-1ValePerOgnim} below)  and then we extend it to the remaining cases (Proposition \ref{prop:BmiValePerOgnim})  by means of Lemma \ref{lemma:legami}.

\


\begin{proposition}\label{prop:Bm-1ValePerOgnim}
Let $m\in\mathbb N^+$, $m\geq 2$. Then
\[(\mathcal{B}\apice{\emph m}_{m-1}) \mbox{ (and hence also $(\mathcal{C}\apice{\emph m}_{m-1})$) holds true }.\]
\end{proposition}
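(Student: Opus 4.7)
The plan is to argue by contradiction. Suppose there is a sequence $p_n\to p_S$ along which $\tilde s_n:=s\apice{m}_{m-1,p_n}(M\apice{m}_{m-1,p_n})^{(p_n-1)/2}\not\to 0$; passing to a subsequence, $\tilde s_n\to s_\infty\in(0,+\infty]$. Set $\mu_n:=M\apice{m}_{m-1,p_n}$ and consider the rescaled function
\[
v_n(y):=\frac{1}{\mu_n}\,u\apice{m}_{p_n}\!\Bigl(\frac{y}{\mu_n^{(p_n-1)/2}}\Bigr),\qquad y\in\widetilde B\apice{m}_{m-1,p_n}=\bigl\{\tilde r_{m-1,n}<|y|<\tilde r_{m,n}\bigr\},
\]
where $\tilde r_{i,n}:=r\apice{m}_{i,p_n}\mu_n^{(p_n-1)/2}$. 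By construction $-\Delta v_n=|v_n|^{p_n-1}v_n$, $|v_n|\le 1$, $|v_n(\tilde s_n)|=1$ with vanishing radial derivative there, and $\tilde r_{m,n}=\mu_n^{(p_n-1)/2}\to+\infty$ by \eqref{epsilonpmN3}.

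The first step is to show that $\tilde s_n$ is bounded. Since the right-hand side is controlled by $1$, standard $C^2$ estimates applied to the radial ODE for $v_n$ (using $|v_n|\le 1$ and the fact that the coefficient $(N{-}1)/r$ is bounded on the region of interest $r\ge\tilde s_n\ge s_\infty/2$ for $n$ large), together with $v_n'(\tilde s_n)=0$, yield a universal $\rho>0$ such that $|v_n|\ge 1/2$ on the shell $\{\,||y|-\tilde s_n|<\rho\,\}$. Pulling back to the original variable, $|u\apice{m}_{p_n}|\ge\mu_n/2$ on a shell of width $2\rho/\mu_n^{(p_n-1)/2}$ around the sphere of radius $s\apice{m}_{m-1,p_n}$, whose $N$-dimensional measure is comparable to $(s\apice{m}_{m-1,p_n})^{N-1}/\mu_n^{(p_n-1)/2}$. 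Consequently
\[
\int_{B\apice{m}_{m-1,p_n}}\!\!|u\apice{m}_{p_n}|^{p_n+1}\,dx\;\gtrsim\;\mu_n^{(p_n+3)/2}\bigl(s\apice{m}_{m-1,p_n}\bigr)^{N-1},
\]
and since the left-hand side is bounded by \eqref{limiteMezzaNormaGrad}, a direct computation gives $\tilde s_n^{N-1}\le C\,\mu_n^{(N-2)(p_n-p_S)/2}\le C$ (the exponent being non-positive and $\mu_n\ge 1$ for $n$ large). Therefore $s_\infty\in(0,+\infty)$, and up to a further subsequence $\tilde r_{m-1,n}\to r_\infty\in[0,s_\infty)$; the strict inequality comes from a uniform Lipschitz estimate on $v_n$ between its zero at $\tilde r_{m-1,n}$ and its value $\pm 1$ at $\tilde s_n$.

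The final step is to analyze the limit and reach a contradiction. Proposition~\ref{StimaFondamentaleNegativa} applied in $C\apice{m}_{m-1,p_n}$ gives a pointwise $U$-type bound for $v_n$ outside $\{|y|<\gamma^{-1/N}\tilde s_n\}$, and combined with standard elliptic regularity this yields $v_n\to v$ in $C^2_{loc}(\{r_\infty<|y|<\infty\})$, with $v$ a nontrivial radial solution of $-\Delta v=|v|^{p_S-1}v$ satisfying $|v(s_\infty)|=1$, $v\in\mathcal D^{1,2}$, and with decay $|v(y)|\lesssim|y|^{-(N-2)}$ at infinity. The main obstacle is then to exclude both possible geometries of the limit. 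If $r_\infty>0$, the critical Pohozaev identity on the exterior $\{|y|>r_\infty\}$ (the decay makes the boundary terms at infinity vanish) forces $\partial_\nu v\equiv 0$ on $\{|y|=r_\infty\}$; together with $v=0$ there, radial ODE uniqueness gives $v\equiv 0$, contradicting $|v(s_\infty)|=1$. If $r_\infty=0$, then by removable singularity $v$ extends to a constant-sign radial solution of the critical equation on all of $\R^N$ with finite Dirichlet energy, so by the Caffarelli--Gidas--Spruck classification $v$ is (up to sign) a dilate of $U$ centered at the origin; its extremum is thus attained only at $0$, forcing $s_\infty=0$, contradicting $s_\infty>0$. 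In either case we reach a contradiction, so $\tilde s_n\to 0$, proving $(\mathcal B\apice{m}_{m-1})$. Property $(\mathcal C\apice{m}_{m-1})$ follows at once from $r\apice{m}_{m-1,p}<s\apice{m}_{m-1,p}$.
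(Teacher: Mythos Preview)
Your argument is correct and reaches the same conclusion, but the techniques at each stage differ from the paper's.

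For the boundedness of $\tilde s_n$ the paper introduces the one–dimensional rescaling $w_p(s)=z_p\bigl(s+\tilde s_p\bigr)$, passes to the limit in the ODE (the first–order coefficient $(N-1)/(s+\tilde s_p)$ disappears when $\tilde s_p\to\infty$), and uses Fatou together with \eqref{limiteMezzaNormap} to obtain a contradiction; the degenerate case $a_p\to 0$ is handled separately via the derivative bound of Lemma~\ref{lemma:stimaDerivataInrp}. Your shell argument is more direct: the energy bound \eqref{limiteMezzaNormaGrad} combined with the lower bound $|u\apice{m}_{p_n}|\ge \mu_n/2$ on a shell of controlled measure yields $\tilde s_n^{\,N-1}\le C\,\mu_n^{(N-2)(p_n-p_S)/2}\le C$ in one stroke, with no case distinction. (Your phrase ``$\tilde s_n\ge s_\infty/2$'' is slightly imprecise when $s_\infty=+\infty$, but the intended statement---that $(N-1)/r$ is bounded for $r\ge\tilde s_n$ once $\tilde s_n\not\to 0$---is clearly correct.)

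For the exclusion of $r_\infty>0$ the paper shows that the limit $z$ extends to $H^1_0(\Pi_{r_\infty})$ and, by \eqref{best1}--\eqref{best2}, achieves the best Sobolev constant on a proper subdomain of $\R^N$, which is impossible. Your Pohozaev argument on the exterior domain is equally valid: the critical exponent kills the volume term, the decay $|v|\lesssim|y|^{2-N}$, $|\nabla v|\lesssim|y|^{1-N}$ makes the boundary contribution at infinity vanish, and one is left with $r_\infty\int_{|y|=r_\infty}|\partial_\nu v|^2=0$; then $v=\partial_\nu v=0$ on the sphere forces $v\equiv 0$ by ODE uniqueness. For $r_\infty=0$ the paper integrates the radial ODE to get $z'(\delta)\delta^{N-1}\to\alpha>0$, contradicting $\nabla z\in L^2$; your route via removable singularity (valid because $|v|\le 1$ and the right–hand side is bounded) and the Caffarelli--Gidas--Spruck classification is a clean alternative. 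Each approach buys something: the paper's arguments are self–contained and elementary, while yours invoke stronger structural results (Pohozaev, classification) but avoid the finer case analysis on $a_p$.
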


\

We first get the following easy estimate.

\begin{lemma} \label{lemma:stimaDerivataInrp}
There exists $C_N:=C_N(m)>0$  and $\delta=\delta (m) >0$ such that:
\[
\big|(u\apice{\emph m}_p)'(r)\big|\leq \frac{C_N}{r^{\frac{p+1}{p-1}}} \quad\  \forall\, r\in (0,1), \ \forall\, (0<) p_S-p\leq \delta.
\]
\end{lemma}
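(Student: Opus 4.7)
The plan is to integrate the radial ODE for $u\apice{m}_p$, then apply H\"older's inequality to control the resulting integral by the $L^{p+1}$-norm of $u\apice{m}_p$ (which is uniformly bounded thanks to Proposition \ref{propositionComportamRegioNod}), and finally compare the resulting exponent with $-\frac{p+1}{p-1}$.

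First I would rewrite $-\Delta u=|u|^{p-1}u$ in radial form as $(r^{N-1}(u\apice{m}_p)'(r))'=-r^{N-1}|u\apice{m}_p(r)|^{p-1}u\apice{m}_p(r)$. Integrating on $(0,r)$, noting that $(u\apice{m}_p)'(0)=0$ by radial regularity, gives
\[
r^{N-1}|(u\apice{m}_p)'(r)|\;\le\;\int_0^r s^{N-1}|u\apice{m}_p(s)|^p\,ds.
\]
Now I would apply H\"older's inequality with exponents $p+1$ and $\frac{p+1}{p}$, after splitting $s^{N-1}=s^{(N-1)/(p+1)}\cdot s^{(N-1)p/(p+1)}$, to obtain
\[
\int_0^r s^{N-1}|u\apice{m}_p(s)|^p\,ds\;\le\;\Bigl(\tfrac{r^N}{N}\Bigr)^{\!1/(p+1)}\Bigl(\int_0^r s^{N-1}|u\apice{m}_p(s)|^{p+1}\,ds\Bigr)^{\!p/(p+1)}.
\]
The second factor equals $\omega_{N-1}^{-p/(p+1)}\|u\apice{m}_p\|_{L^{p+1}(B)}^{p}$, which is uniformly bounded for $p$ near $p_S$ by summing \eqref{limiteMezzaNormaGrad} over the $m$ nodal regions (equivalently, by Proposition \ref{PropositionUnicoMaxeMin}-(iii)). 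Dividing through by $r^{N-1}$ then yields, for some $C=C(m)>0$ and some $\delta=\delta(m)>0$,
\[
|(u\apice{m}_p)'(r)|\;\le\;C\,r^{\,\frac{N}{p+1}-(N-1)}\qquad\forall\,r\in(0,1),\ \forall\,0<p_S-p\le\delta.
\]

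The final step is the comparison of exponents. Setting $f(p):=\frac{p+1}{p-1}+\frac{N}{p+1}-(N-1)$, one computes $f(p_S)=\frac{N}{2}+\frac{N-2}{2}-(N-1)=0$ and $f'(p)=-\frac{2}{(p-1)^2}-\frac{N}{(p+1)^2}<0$. Hence $f(p)\ge 0$ for all $p\in(1,p_S]$, which is precisely $\frac{N}{p+1}-(N-1)\ge -\frac{p+1}{p-1}$. Since $r\in(0,1)$, the larger exponent makes $r^{\,\cdot\,}$ smaller, so $r^{N/(p+1)-(N-1)}\le r^{-(p+1)/(p-1)}$, producing the claimed bound $|(u\apice{m}_p)'(r)|\le C_N/r^{(p+1)/(p-1)}$.

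I do not foresee a real obstacle: the argument is essentially a textbook ODE integration combined with the uniform $L^{p+1}$-bound coming from the energy estimate in Section \ref{Section:Asymptotic2}. The only mildly subtle point is the exponent comparison, which reflects the fact that both $\frac{N}{p+1}-(N-1)$ and $-\frac{p+1}{p-1}$ collapse to $-\frac{N}{2}$ as $p\to p_S$, the H\"older side being the sharper of the two throughout $(1,p_S)$; this is why an equality \emph{at} $p_S$ allows the inequality to persist for a full subcritical window $p_S-p\le\delta$.
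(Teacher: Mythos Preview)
Your argument is correct and follows the same skeleton as the paper: integrate the radial ODE, bound the right-hand side by H\"older, and invoke a uniform integral estimate from Proposition~\ref{propositionComportamRegioNod}. The only difference is in the choice of H\"older pair. You pair $|u\apice{m}_p|^p$ with the $L^{p+1}$-norm, which gives the exponent $\frac{N}{p+1}-(N-1)$ and then forces you to do the exponent comparison $f(p)\ge 0$ at the end. The paper instead pairs with the $L^{\frac{N}{2}(p-1)}$-norm (controlled by \eqref{limiteMezzaNormap}); a short computation shows this produces the exponent $N\bigl(1-\tfrac{2p}{N(p-1)}\bigr)-(N-1)=-\tfrac{p+1}{p-1}$ \emph{exactly}, so no comparison step is needed. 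Your route is slightly longer but has the mild advantage of relying only on the energy norm $\|u\apice{m}_p\|_{L^{p+1}}$ rather than the less standard $L^{\frac{N}{2}(p-1)}$ quantity; the paper's route is cleaner because the H\"older exponent is chosen to land on the target power directly.
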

\begin{proof} Writing \eqref{problem} in polar coordinates it is easy to see that
\[\left( (u\apice{m}_p)'(r)\,r^{N-1} \right)'=-r^{N-1}|u\apice{m}_p(r)|^{p-1}u\apice{m}_p(r),\]
so integrating on $(0,r)$ (recall that $(u\apice{m}_p)'(0)=0$), by H\"older inequality, we have
\begin{eqnarray*}\big|(u\apice{m}_p)'(r)\big|\,r^{N-1}&\leq&\int_{\{|x|<r\}}|u\apice{m}_p(x)|^p \, dx\\
&\leq & \omega_N^{1-\frac{2p}{N(p-1)}}r^{N\left(1-\frac{2p}{N(p-1)}\right)} \left[\int_{B}|u\apice{m}_p(x)|^{\frac{N}{2}(p-1)} \, dx\right]^{\frac{2p}{N(p-1)}}
\end{eqnarray*}
and the conclusion follows from \eqref{limiteMezzaNormap}.
\end{proof}

\

\begin{proof}[Proof of Proposition \ref{prop:Bm-1ValePerOgnim}]
In order to shorten the notations let us set $s_p:=s\apice{m}_{m-1,p}$ and $M_p:=M\apice{m}_{m-1,p}$.
Hence to prove $(\mathcal{B}\apice{m}_{m-1})$ means to show that 
\begin{equation}\label{tesibh}
s_p(M_p)^{\frac{p-1}{2}} {\longrightarrow} \,0 \quad\mbox{ as }p\rightarrow p_S.
\end{equation}
We also set $r_p:= r\apice{m}_{m-1,p} $ and we define 
\begin{equation}\label{zpmenoNsimplify}  z_p:=z\apice{m}_{m-1,p},
\end{equation} where $z\apice{m}_{m-1,p}$  is the rescaled function defined in \eqref{zeta} for $i=m-1$, i.e. the one related to the last nodal region $T\apice{m}_{m-1,p}=B\apice{m}_{m-1,p}$. Recall (see \eqref{equazionizp} and \eqref{zippositive} with $i=m-1$) that it satisfies 
\begin{equation}\label{equazionizpProof}
\left\{
\begin{array}{lr}
-\Delta z_p=z_p^p &\ \mbox{ in }\widetilde{B}\apice{m}_{m-1,p}\\
z_p=0 &\mbox{ on }\partial( \widetilde{B}\apice{m}_{m.1,p})\\
z_p(s_p)=1\  \mbox{ and }\ (z_p)'(s_p)=0
\end{array}
\right.
\end{equation}
with $\widetilde{B}\apice{m}_{m-1,p}= \big\{r_p(M_p)^{\frac{p-2}{2}}<|x|< (M_p)^{\frac{p-2}{2}}\big\}$. Moreover $z_p$ does not change sign in $\widetilde{B}\apice{m}_{m-1,p}$ and w.l.g. let us assume that
\[z_p>0 \ \mbox{ in }\widetilde{B}\apice{m}_{m-1,p}.\]  We follow similar arguments as in the proofs of \cite[Lemma 4-5]{Iacopetti} (which concern the study of the least-energy nodal radial solution for the Brezis-Nirenberg problem) and consider also (setting $s:=|x|$) the one-dimensional rescaling of $u\apice{m}_{p}$:
\[w_p(s):=z_p\left(s+s_p(M_p)^{\frac{p-1}{2}}\right)=\frac{1}{M_p}\ u\apice{m}_p\Big(s_p +\frac{s}{(M_p)^{\frac{p-1}{2}}}\Big), \qquad s\in (a_p,b_p),\]
where 
\[\begin{array}{ll}
& a_p:=(r_p-s_p) (M_p)^{\frac{p-1}{2}},\\
& b_p:=(1-s_p) (M_p)^{\frac{p-1}{2}}.
\end{array}
\]
Then $w_p$ satisfies 
\begin{equation}\label{eq:w_p}
\left\{
\begin{array}{lr}
w_p''(s)+\displaystyle{\frac{N-1}{s+s_p(M_p)^{\frac{p-1}{2}}}}w_p'(s)+w_p(s)^{p}=0 \qquad s\in(a_p,b_p)\\
w_p'(0)=0,\ \ w_p(0)=1\\
w_p\geq 0 
\end{array}\right..
\end{equation}
Also let us observe that by Proposition \ref{Proposition:spVaAZero}  and \eqref{epsilonpmN3} one has that
\[b_p\rightarrow +\infty \  \ \mbox{ as }p\rightarrow p_S.\]

\

We divide the proof into two steps.

\

\emph{STEP 1. First we show that there exists $C>0$ independent of $p$ such that:
\begin{equation}
\label{noinfi}
s_p(M_p)^{\frac{p-1}{2}}\leq C.
\end{equation}}

\emph{Proof of STEP 1.} Assume by contradiction that  up to a subsequence
$s_p(M_p)^{\frac{p-1}{2}}\rightarrow +\infty$.\\
Up to a subsequence $a_p\rightarrow \bar a$, where $\bar a\in [-\infty, 0]$.\\
If $\bar a=-\infty$ or $\bar a<0$, then passing to the limit into \eqref{eq:w_p} we get that $w_p\rightarrow w$ in $C^1_{loc}(\bar a, +\infty)$ where $w$ solves the limit problem
\begin{equation}\label{eq:w}
\left\{
\begin{array}{lr}
w''(s)+w(s)^{p_S}=0 \qquad s\in(\bar a,+\infty)\\
w'(0)=0,\ \ w(0)=1\\
\end{array}\right.
\end{equation}
and so in particular, by definition of $w_p$, $w>0$ in $(\bar a,+\infty)$.
By a change of variable we have
\begin{eqnarray}\label{pippi}
\int_{\{r_p<|x|<1\}} |u\apice{m}_p(x)|^{\frac{N}{2}(p-1)}dx &=& \omega_N\int_{r_p}^1 |u\apice{m}_p(r)|^{\frac{N}{2}(p-1)} r^{N-1}dr \nonumber\\
&\geq &
 \omega_N s_p^{N-1}\int_{s_p}^1 |u\apice{m}_p(r)|^{\frac{N}{2}(p-1)} dr \nonumber \\
&=& \omega_N \left[s_p(M_p)^{\frac{p-1}{2}}\right]^{N-1}\int_{0}^{b_p} |w_p(s)|^{\frac{N}{2}(p-1)} ds,
\end{eqnarray}
and by Fatou's lemma 
\[\liminf_{p\rightarrow p_S}\int_{0}^{b_p} |w_p(s)|^{\frac{N}{2}(p-1)} ds \geq \int_{0}^{+\infty} |w(s)|^{2^*} ds >0. \]
Hence passing to the limit into \eqref{pippi} we get 
\[\lim_{p\rightarrow +\infty}\int_{B} |u\apice{m}_{m-1,p}(x)|^{\frac{N}{2}(p-1)}dx =\lim_{p\rightarrow +\infty}\int_{\{r_p<|x|<1\}} |u\apice{m}_{p}(x)|^{\frac{N}{2}(p-1)}dx =+\infty,\]
which is in contradiction with \eqref{limiteMezzaNormap}.

\

If $\bar a=0$ the previous argument fails because it could be $w\equiv 0$. So we consider the rescaled function $z_p$  in \eqref{zpmenoNsimplify}  which is uniformly bounded and  solves \eqref{equazionizpProof}.

By definition $z_p\big(r_p(M_p)^{\frac{p-1}{2}}\big)=0$ and $z_p\big(s_p(M_p)^{\frac{p-1}{2}}\big)=1$ for any $p\in (1,p_S)$, so
\[\frac{\left|z_p\big(s_p(M_p)^{\frac{p-1}{2}}\big)\ -\ z_p\big(r_p(M_p)^{\frac{p-1}{2}}\big)    \right|}{|a_p|}=\frac{1}{|a_p|}\rightarrow +\infty\quad\mbox{ as }p\rightarrow p_S.\]

where, since $z_p$ is regular, one has 
\[\frac{\left|z_p\big(s_p(M_p)^{\frac{p-1}{2}}\big)\ -\ z_p\big(r_p(M_p)^{\frac{p-1}{2}}\big)    \right|}{|a_p|}=|(z_p)'(\xi_p) |
\]
for some $\xi_p\in \left(r_p(M_p)^{\frac{p-1}{2}}, s_p(M_p)^{\frac{p-1}{2}} \right)$.
As a consequence
\begin{equation}\label{derivataInPuntoEsplode}
  |(z_p)'(\xi_p) | \rightarrow  +\infty\quad\mbox{ as }p\rightarrow p_S.
  \end{equation}
Since byProposition \ref{PropositionUnicoMaxeMin} we know that $(z_p)'>0$ in $\left(r_p(M_p)^{\frac{p-1}{2}}, s_p(M_p)^{\frac{p-1}{2}} \right)$ and  moreover by definition $z_p>0$, by writing the equation \eqref{equazionizpProof} in polar coordinates  it is easy to see that
\[(z_p)''<0\quad \mbox{ in }\left(r_p(M_p)^{\frac{p-1}{2}},s_p(M_p)^{\frac{p-1}{2}} \right),\]
hence by \eqref{derivataInPuntoEsplode} 
\begin{equation}\label{infiContra} (z_p)'
\big(
r_p(M_p)^{\frac{p-1}{2}}
\big) 
\geq (z_p)'(\xi_p)  \rightarrow  +\infty\quad\mbox{ as }p\rightarrow p_S.
\end{equation}
On the other side by Lemma \ref{lemma:stimaDerivataInrp} we also obtain
\begin{equation}\label{zeroContra}
\big|(z_p)'\big(
r_p(M_p)^{\frac{p-1}{2}}
\big) \big|\leq \frac{C_N}{\big(
r_p(M_p)^{\frac{p-1}{2}}
\big) ^{\frac{p+1}{p-1}}},
\end{equation}
where, since $\bar a=0$, then $
r_p(M_p)^{\frac{p-1}{2}}
\rightarrow +\infty$, and so \eqref{zeroContra} gives a contradiction with \eqref{infiContra}.

\

\emph{STEP 2. We show \eqref{tesibh}.}

\

\emph{Proof of STEP 2.} We argue by contradiction assuming by the results of \emph{STEP 1.} that,  up to a subsequence, $s_p(M_p)^{\frac{p-1}{2}}\rightarrow s_0>0$ as $p\rightarrow p_S$. Then, since $0<r_p<s_p$, we can have one of the following possibilities for $a_p$:
\begin{itemize}
\item[(i)] $a_p\rightarrow 0$
\item[(ii)] $a_p\rightarrow \bar a<0$.
\end{itemize}
Next we show that they both lead to a contradiction.

If we assume (i) we can repeat the same proof as in the case $\bar a=0$ in \emph{STEP 1.} The only difference is that now one has  $r_p(M_p)^{\frac{p-1}{2}}\rightarrow s_0$, which still implies a uniform bound of $(z_p)'(r_p(M_p)^{\frac{p-1}{2}})$ by \eqref{zeroContra}. This gives again a contradiction with \eqref{infiContra}.

Let us assume (ii) and define $r_0:=\bar{a}+s_0$. Clearly $r_0\in [0,s_0)$ and $r_p(M_p)^{\frac{p-1}{2}}\rightarrow r_0$.

If $r_0>0$, then we consider again the rescaled function $z_p$ in \eqref{zpmenoNsimplify} which is uniformly bounded and solves \eqref{equazionizpProof}.
So we get that $z_p\rightarrow z$ in $C^2_{loc}(\Pi_{r_0})$ as $p\rightarrow p_S$, where $\Pi_{r_0}:=\{y\in \mathbb R^N: \ |y|>r_0\}$ and passing to the limit into \eqref{equazionizpProof} ($s_0>r_0$), we have that  $z$ is a positive radial solution of 
\begin{equation}\label{eq:z}
\left\{
\begin{array}{lr}
-\Delta z= z^{p_S} \mbox{ in }\Pi_{r_0}\\
z'(s_0)=0,\ \ z(s_0)=1\\
\end{array}\right.
\end{equation}
In particular $z\not\equiv 0$.
Next we show that $z$ can be extended by continuity to zero on $\partial\Pi_{r_0}$, from which we get that $z\in H^1_0( \Pi_{r_0})$. In fact observe that $(z_p)'$ is uniformly bounded in $(r_p(M_p)^{\frac{p-1}{2}},s_p(M_p)^{\frac{p-1}{2}})$ by a constant $M$. This 	is because we know that $(z_p)'$ is monotone decreasing in $(r_p(M_p)^{\frac{p-1}{2}},s_p(M_p)^{\frac{p-1}{2}})$ and also, by \eqref{zeroContra}
 and $r_p(M_p)^{\frac{p-1}{2}}\rightarrow r_0>0$,  that  $(z_p)'(r_p(M_p)^{\frac{p-1}{2}})$ is uniformly bounded.
 As a consequence
 \[z_p(s)\leq M\left[s-r_p(M_p)^{\frac{p-1}{2}}\right], \quad s\in (r_p(M_p)^{\frac{p-1}{2}},s_p(M_p)^{\frac{p-1}{2}})\]
 and so, passing to the limit as $p\rightarrow p_S$ we get
 \[z(s)\leq M\left[s-r_0\right], \quad s\in (r_0,s_0),\]
 from which the extension property follows. 
 \\
Observe now that when $i=m-1$ the uniform upper bound \eqref{bound parte negativaFONDAMENTALE} for $u\apice{m}_p$ in Proposition \ref{StimaFondamentaleNegativa} holds (indeed let us recall that in the case $i=m-1$  the assumption \eqref{Rmi} is not required). By scaling it gives the following upper bound for $z_p$:
\[|z_p(y)|\leq\frac{1}{\left( 1+ \frac{2\alpha}{N (N-2)^2 }  |y|^2 \right)^{\frac{N-2}{2}}} \ \quad\forall y\in \widetilde C\apice{m}_{m-1, p},   \]
where 
\[ \widetilde C\apice{m}_{m-1,p} := \left\{ y\in \mathbb R^N : \ \gamma^{-\frac{1}{N}}s_p (M_p)^{\frac{p-1}{2}} <|y|< (M_p)^{\frac{p-1}{2}}\right\}\, \subset \widetilde B\apice{m}_{m-1,p}.
\]
Moreover $|z_p|\leq 1$ by definition, and so we get a uniform upper bound in the whole annulus $\widetilde B\apice{m}_{m-1,p}$, precisely:
\[|z_p(y)|\leq
\left\{
\begin{array}{ll}
1, & y\in \widetilde B\apice{m}_{m-1,p}\setminus\widetilde C\apice{m}_{m-1,p} \\
\frac{1}{\left( 1+ \frac{2\alpha}{N (N-2)^2 }  |y|^2 \right)^{\frac{N-2}{2}}}, &  y\in \widetilde C\apice{m}_{m-1,p} .
\end{array}\right.
  \]
Hence we can use Lebesgue's theorem to prove
 \begin{eqnarray}\label{best1}\int_{\Pi_{r_0}}|z|^{2^*}\,dx&\overset{\mbox{\tiny{Lebesgue}}}{ =} &
\lim_{p\rightarrow p_S}\int_{\widetilde C\apice{m}_{m-1,p}}| z_p|^{\frac{N}{2}(p-1)}\,dx 
\\
& = &
 \lim_{p\rightarrow p_S}\int_B |u\apice{m}_{m-1,p}|^{\frac{N}{2}(p-1)}\,dx\overset{\eqref{limiteMezzaNormap}}{=} S_N^{\frac{N}{2}}
 \end{eqnarray}
($\frac{N}{2}(p-1)\rightarrow 2^*$) and moreover, by Fatou's lemma  
\begin{eqnarray}\label{best2}\int_{\Pi_{r_0}}|\nabla z|^{2}\,dx&\overset{\mbox{\tiny{Fatou}}}{\leq }& \liminf_{p\rightarrow p_S}\int_{\widetilde C\apice{m}_{m-1,p}}|\nabla z_p|^{2}\,dx \\
&  =&  \liminf_{p\rightarrow p_S}
\frac{(M_p)^{\frac{N}{2}(p-1)}}{(M_p)^{p+1}}\int_{B}|\nabla u\apice{m}_{m-1,p}|^{2}\,dy\nonumber\\
&\leq &  \lim_{p\rightarrow p_S}\int_{B}|\nabla u\apice{m}_{m-1,p}|^{2}\,dy\overset{\eqref{limiteMezzaNormaGrad}}{=} S_N^{\frac{N}{2}},
\end{eqnarray}
where the last inequality follows from the fact that $\frac{N}{2}(p-1)\leq (p+1)$ for $p<p_S$ and $M_p >1$ definitely (indeed $M_p \rightarrow +\infty$ by \eqref{epsilonpmN3} with $i=m-1$).
As a consequence of  \eqref{best1} and \eqref{best2} the function $z$ attains the best Sobolev constant $S_N$ in $\Pi_{r_0}$ and this is clearly impossible since it is known that $S_N$ is not attained in domains strictly contained in $\R^N$. This concludes the proof in the case $r_0>0$.
\\
Assume now $r_0=0$, then $z_p\rightarrow z$ in $C^2_{loc}(\R^N\setminus\{0\})$ as $p\rightarrow p_S$, where $z$ is a radial, positive bounded solution to
\begin{equation}\label{eq:zaltra}
\left\{
\begin{array}{lr}
-\Delta z= z^{p_S} \mbox{ in }\R^N\setminus\{0\}\\
z'(s_0)=0
\end{array}
\right..
\end{equation}
Moreover by Fatou's lemma,  as in \eqref{best2}, we have
\begin{equation}
\label{asss}
\int_{\R^N}|\nabla z|^2\, dx< \infty.
\end{equation} Integrating 
$-\left( z'(r)r^{N-1} \right)' \overset{\eqref{eq:zaltra}}{=}z^{p_S}(r)r^{N-1}$  we get
\[0< \int_{\delta}^{s_0}z^{p_S}(r)r^{N-1}\,dr= z'(\delta)\delta^{N-1}\qquad \forall\delta\in (0,s_0),\]
where the left hand side is monotone decreasing in $\delta$ and so passing to the limit as $\delta\rightarrow 0^+$ we get
\[ z'(\delta)\delta^{N-1}\rightarrow \alpha>0,\]
namely $z'(r)\sim\frac{1}{r^{N-1}}$ around the origin and so
\[
\int_{\R^N}|\nabla z(x)|^2\, dx=\int_{0}^{+\infty}|z'(r)|^2r^{N-1}\, dr=+\infty,
\]
which contradicts \eqref{asss}.
\end{proof}

\
 
When $m\geq 3$ we need to prove property \eqref{Bmi} for the other indices $i\neq m-1$:

\begin{proposition}\label{prop:BmiValePerOgnim} Let $m\in\mathbb N^+$, $m\geq 3$. Then
\[\mbox{\eqref{Bmi}  (and hence also \eqref{Cmi}) holds true }\quad\forall\, i=1,\ldots, m-2.\]

\end{proposition}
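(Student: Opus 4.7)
The plan is to reduce the general case to the already-proved case $i=m-1$ (Proposition \ref{prop:Bm-1ValePerOgnim}) by invoking the scaling relations of Lemma \ref{lemma:legami}. The key observation is that the quantities $s^m_{j,p}(M^m_{j,p})^{(p-1)/2}$ are invariants under the relabeling of nodal regions, in the sense made precise by \eqref{QuartoLegame}.

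Concretely, fix $i \in \{1,\ldots,m-2\}$ and consider the auxiliary index $h := i+1$. Since $2 \leq h \leq m-1$, the radial solution $u^h_p$ with exactly $h$ nodal regions is available, and Proposition \ref{prop:Bm-1ValePerOgnim} applied to $u^h_p$ (in its own right, with $h$ playing the role of $m$) yields property $(\mathcal{B}^h_{h-1})$, that is
\[
s^h_{h-1,p}\bigl(M^h_{h-1,p}\bigr)^{\frac{p-1}{2}} \longrightarrow 0 \qquad\text{as } p \to p_S.
\]
Since $h-1 = i$, this reads $s^h_{i,p}(M^h_{i,p})^{(p-1)/2} \to 0$. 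Now I apply \eqref{QuartoLegame} of Lemma \ref{lemma:legami} with the same choice $h = i+1$ and $j = i$ (note $j = i \leq h-1$, so the relation is valid): it gives
\[
s^h_{i,p}\bigl(M^h_{i,p}\bigr)^{\frac{p-1}{2}} \;=\; s^m_{i,p}\bigl(M^m_{i,p}\bigr)^{\frac{p-1}{2}}.
\]
Combining the two displays yields $s^m_{i,p}(M^m_{i,p})^{(p-1)/2} \to 0$, which is exactly $(\mathcal{B}^m_i)$. The implication $(\mathcal{B}^m_i)\Rightarrow(\mathcal{C}^m_i)$ is trivial since $r^m_{i,p}<s^m_{i,p}$.

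Because all the substantial analytic work (the blow-up analysis with the Atkinson--Peletier style bound, the Strauss decay, the exclusion of bubbling at a half-space or at a punctured $\R^N$) is already concentrated in the proof of Proposition \ref{prop:Bm-1ValePerOgnim}, there is essentially no remaining obstacle here: the proof is a one-paragraph corollary of Lemma \ref{lemma:legami} plus Proposition \ref{prop:Bm-1ValePerOgnim}. The only conceptual point worth underlining is that the uniqueness statement for radial solutions with a prescribed number of nodal regions (used to identify $\widetilde{w}^m_{h,p}$ with $u^h_p$ in Lemma \ref{lemma:legami}) makes the scaling invariant \eqref{QuartoLegame} available for every intermediate $h \leq m-1$, and this is precisely what propagates the single boundary case $i = m-1$ to every interior index.
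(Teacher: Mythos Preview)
Your proof is correct and follows essentially the same approach as the paper: both fix $i\in\{1,\ldots,m-2\}$, set $h:=i+1$, apply \eqref{QuartoLegame} with $j=i$ to obtain $s^m_{i,p}(M^m_{i,p})^{(p-1)/2}=s^{i+1}_{i,p}(M^{i+1}_{i,p})^{(p-1)/2}$, and then invoke Proposition~\ref{prop:Bm-1ValePerOgnim} for the solution with $i+1$ nodal regions to conclude. Your additional commentary on why the scaling invariance works is accurate but not needed for the argument itself.
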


\begin{proof}

Let us fix $i\in\{1\ldots, m-2\}$, we want to show that $s\apice{m}_{i,p} ( M\apice{m}_{i,p} )^{\frac{p-1}{2}}\longrightarrow 0$ as $p\rightarrow p_S$.\\
The proof  follows by  Lemma \ref{lemma:legami}  and Proposition \ref{prop:Bm-1ValePerOgnim}. Indeed choosing $j:=i$ and $h:=i+1$ into \eqref{QuartoLegame} we get
\[
s\apice{m}_{i,p}( M\apice{m}_{i,p})^{\frac{p-1}{2}}\overset{\eqref{QuartoLegame}}{=} s\apice{i+1}_{i,p}(M\apice{i+1}_{i,p})^{\frac{p-1}{2}}\overset{\mbox{\tiny{(Proposition \ref{prop:Bm-1ValePerOgnim})}}}{\longrightarrow } 0  \quad \mbox{ as }p\rightarrow p_S.
\]

\end{proof}

\

As a consequence of the properties \eqref{Ami} and \eqref{Bmi} we may remove the assumption \eqref{Rmi} in the statement of  Proposition \ref{StimaFondamentaleNegativa}, indeed:
\begin{corollary}\label{cor:RmiSatisfied}
Let $m\in\N^+$, $m\geq 3$. Then
\begin{equation}\label{RmiVale!}
\mbox{\eqref{Rmi} holds }\forall \,  i=1,\ldots, m-2\end{equation}
As a consequence the results in Proposition \ref{StimaFondamentaleNegativa} can be stated without the assumption  \eqref{Rmi}.
\end{corollary}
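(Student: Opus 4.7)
The plan is to reduce property \eqref{Rmi} to the already-established Proposition \ref{Proposition:spVaAZero} via the scaling identities of Lemma \ref{lemma:legami}. The key observation is that the ratio $s\apice{m}_{i,p}/r\apice{m}_{i+1,p}$ is precisely the quantity that gets normalized to $1$ when one rescales the solution $u\apice{m}_p$ restricted to its first $i+1$ nodal regions so that the outer nodal radius equals $1$; by the uniqueness of radial solutions with a prescribed number of nodal domains, this rescaled function must coincide with $u\apice{i+1}_p$.

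More concretely, fix $i \in \{1,\dots,m-2\}$ and apply \eqref{PrimoLegame} with the choice $h:=i+1$ and $j:=i$ (which is admissible since $j = h-1 \leq h-1$ and $h \leq m-1$). This yields
\[
s\apice{i+1}_{i,p} \;=\; \frac{s\apice{m}_{i,p}}{r\apice{m}_{i+1,p}}.
\]
Now $s\apice{i+1}_{i,p}$ is the unique maximum point of $|u\apice{i+1}_p|$ in the \emph{outermost} nodal region of the solution $u\apice{i+1}_p$, which has $i+1 \geq 2$ nodal regions. Therefore Proposition \ref{Proposition:spVaAZero}, applied with parameter $m' := i+1 \geq 2$ and index $i' := i = m'-1 \in \{1,\dots,m'-1\}$, gives $s\apice{i+1}_{i,p} \to 0$ as $p \to p_S$. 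This proves \eqref{Rmi} for every $i \in \{1,\dots,m-2\}$.

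For the final assertion, recall that in the proof of Proposition \ref{StimaFondamentaleNegativa} the hypothesis \eqref{Rmi} was invoked only to guarantee, for $i \neq m-1$, the existence of $\delta_i > 0$ such that $s\apice{m}_{i,p}^{N-2} < \gamma^{1/(k-1)} r\apice{m}_{i+1,p}$ whenever $p_S - p < \delta_i$. The case $i = m-1$ was handled unconditionally using Proposition \ref{Proposition:spVaAZero}, while for $i \in \{1,\dots,m-2\}$ property \eqref{Rmi} was precisely what was needed. Since we have now shown that \eqref{Rmi} holds automatically, the dichotomy disappears and the same inequality can be secured for every $i \in \{1,\dots,m-1\}$ without additional assumptions, so Proposition \ref{StimaFondamentaleNegativa} applies in full generality. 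The only mildly delicate point is keeping track of indices in Lemma \ref{lemma:legami} (in particular that $j = h-1$ is permitted and that applying Proposition \ref{Proposition:spVaAZero} to solutions with fewer nodal regions is legitimate), but no new estimates are required.
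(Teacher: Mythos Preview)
Your argument is correct and is actually a shorter route than the one the paper takes. The paper proves \eqref{Rmi} by first establishing the harder properties $(\mathcal{A}\apice{m}_{i+1})$ and $(\mathcal{B}\apice{m}_i)$ via Propositions \ref{proposition:AmiValePerOgnim}, \ref{prop:Bm-1ValePerOgnim} and \ref{prop:BmiValePerOgnim}, then observes that dividing $(\mathcal{B}\apice{m}_i)$ by $(\mathcal{A}\apice{m}_{i+1})$ gives $s\apice{m}_{i,p}/r\apice{m}_{i+1,p}\to 0$; it also has to check explicitly that no circularity is introduced, since Proposition \ref{prop:Bm-1ValePerOgnim} itself invokes Proposition \ref{StimaFondamentaleNegativa} (though only in the case $i=m-1$, where \eqref{Rmi} is not assumed). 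Your approach bypasses all of this: the scaling identity \eqref{PrimoLegame} with $h=i+1$, $j=i$ rewrites the ratio directly as $s\apice{i+1}_{i,p}$, and Proposition \ref{Proposition:spVaAZero} (which rests only on the Strauss inequality and the elementary energy bounds of Proposition \ref{propositionComportamRegioNod}) finishes the job. This is more economical and makes the logical independence from Proposition \ref{StimaFondamentaleNegativa} transparent. The paper's route, on the other hand, is the natural by-product of results it needs anyway for the asymptotic analysis, so neither approach is wasted effort.
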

\begin{proof} By Proposition \ref{proposition:AmiValePerOgnim}, Proposition  \ref{prop:Bm-1ValePerOgnim}   and    Proposition \ref{prop:BmiValePerOgnim} we have that the properties \eqref{Ami} and \eqref{Bmi} are satisfied for any $i=1,\ldots, m-1$. Moreover observe that we haven't used \eqref{Rmi} in order to obtain them. Indeed \eqref{Rmi} appears only in the case $i\neq m-1$ of Proposition \ref{StimaFondamentaleNegativa} and, up to now, we have used the estimate \eqref{bound parte negativaFONDAMENTALE} of Proposition \ref{StimaFondamentaleNegativa} only   in the  proof of Proposition \ref{prop:Bm-1ValePerOgnim}, namely exactly in the case  $i=m-1$when the assumption \eqref{Rmi} is not needed to prove \eqref{bound parte negativaFONDAMENTALE}.

Last it is immediate to verify that
\[ (\mathcal{A}\apice{m}_{i+1}) \mbox{ and }(\mathcal{B}\apice{m}_i)\quad \Longrightarrow  \quad \eqref{Rmi}. 
\]
\end{proof}

\

\begin{remark}
Let us observe that  the rate of divergence of the $M\apice{\emph m}_{i,p}$ for different indexes $i$ cannot be the same, i.e.  it immediately follows from $(\mathcal{A}\apice{\emph m}_{i+1})$ and $(\mathcal{C}\apice{\emph m}_{i+1})$ that:
\begin{equation}\label{limiterappepsilon}
\frac{M\apice{\emph m}_{i,p}}{M\apice{\emph m}_{i+1,p}}\longrightarrow +\infty \  \mbox{ as }
p\rightarrow p_S,\ \mbox{
 $\forall\, i=0,\ldots, m-2$}.
\end{equation}
For nodal \emph{low-energy} solutions ($m=2$) of \eqref{problem}  with the points of maximum and minimum converging to the same point,  this was already known by the results in \cite[Theorem 1.2]{BenAyedElMehdiPacellaAlmostCritical}.

\end{remark}

\

\

Now, using the properties \eqref{Ami} and \eqref{Cmi} (which follows by \eqref{Bmi}), we can prove the following result, from which Theorem \ref{theorem:analisiAsintoticaCasoNgeq3} follows.

\

\begin{proposition}
\label{Propos:zModificata} 
Let $m\in\N^+$ and let
\[ \widetilde{B}\apice{\emph m}_{i,p}:=(M\apice{\emph m}_{i,p})^{\frac{p-1}{2}}B\apice{\emph m}_{i,p},\  i=0, \ldots, m-1
\]
where $B\apice{\emph m}_{i,p}$ are the nodal regions of $u\apice{\emph m}_p$ defined in \eqref{regioniNodaliupNotazione} and the parameters $M\apice{\emph m}_{i,p}>0$ are the ones introduced in \eqref{Mpm}. Then as  $p\rightarrow p_S$ we have:
\begin{eqnarray}
&&z\apice{\emph m}_{0,p}\chi_{\widetilde{B}\apice{\emph m}_{0,p}}\longrightarrow U\ \mbox{ in $C^2_{loc}(\R^N)$},
\label{zppiuN}
\\
&&(-1)^{i} z\apice{\emph m}_{i,p}\chi_{\widetilde{B}\apice{\emph m}_{i,p}} \longrightarrow U\ \mbox{ in $C^2_{loc}(\R^N\setminus\{0\}),\  \forall\, i=1,\ldots, m-1\:$ (if $m\geq 2$)}
\label{zpmenoN}
\end{eqnarray}
where the rescaled function $z\apice{\emph m}_{i,p}$ are defined in \eqref{zeta}.
\end{proposition}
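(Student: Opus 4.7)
The plan is to extract a limit via compactness, then identify it. First I observe that, by the properties $(\mathcal{A}^m_i)$ and $(\mathcal{C}^m_i)$ just established, the rescaled regions $\widetilde B^m_{i,p}$ exhaust $\R^N$ when $i=0$ (the outer radius $r^m_{1,p}(M^m_{0,p})^{(p-1)/2}\to+\infty$ by $(\mathcal{A}^m_1)$) and $\R^N\setminus\{0\}$ when $i\geq 1$ (the inner radius vanishes by $(\mathcal{C}^m_i)$, the outer one diverges by $(\mathcal{A}^m_{i+1})$). On $\widetilde B^m_{i,p}$ one has $|z^m_{i,p}|\leq 1$ by the definition of $M^m_{i,p}$, and the equation $-\Delta z^m_{i,p}=|z^m_{i,p}|^{p-1}z^m_{i,p}$ has uniformly bounded right-hand side; interior elliptic regularity and Arzel\`a--Ascoli therefore yield, up to a subsequence, convergence $z^m_{i,p}\to z_i$ in $C^2_{loc}$ of the limiting set, with $z_i$ radial, $(-1)^iz_i\geq 0$ and $-\Delta z_i=|z_i|^{p_S-1}z_i$.

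For $i=0$ the convergence includes the origin and the Cauchy data $z^m_{0,p}(0)=1$, $(z^m_{0,p})'(0)=0$ pass to the limit, so by uniqueness of the radial Cauchy problem $z_0\equiv U$. For $i\geq 1$ the situation is more delicate: the maximum of $|z^m_{i,p}|$ is attained on the sphere of radius $s^m_{i,p}(M^m_{i,p})^{(p-1)/2}\to 0$ by $(\mathcal{B}^m_i)$, so the value $1$ at that maximum cannot be carried to the limit by continuity. The plan is to scale the estimate \eqref{bound parte negativaFONDAMENTALE} of Proposition \ref{StimaFondamentaleNegativa} to get, for any $\alpha\in(0,(N-2)/2)$,
\[
|z^m_{i,p}(y)|\leq\Bigl[1+\tfrac{2\alpha}{N(N-2)^2}|y|^2\Bigr]^{-\frac{N-2}{2}}\qquad\text{on }(M^m_{i,p})^{\frac{p-1}{2}}C^m_{i,p},
\]
a set that exhausts $\R^N\setminus\{0\}$ (its inner radius is $\gamma^{-1/N}s^m_{i,p}(M^m_{i,p})^{(p-1)/2}\to 0$, its outer one diverges by $(\mathcal{A}^m_{i+1})$). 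Passing to the limit $p\to p_S$ and then letting $\alpha\uparrow(N-2)/2$ yields $|z_i(y)|\leq U(y)$ on $\R^N\setminus\{0\}$, so $z_i$ is bounded near the origin and, by removability of isolated singularities, extends to a $C^2$ solution of the critical equation on all of $\R^N$ still satisfying $|z_i|\leq U$.

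The identification $|z_i|\equiv U$ is then obtained from the scale-invariance of the $L^{N(p-1)/2}$-norm: a change of variable gives
\[
\int_{\widetilde B^m_{i,p}}|z^m_{i,p}|^{\frac{N}{2}(p-1)}dy=\int_B|u^m_{i,p}|^{\frac{N}{2}(p-1)}dx\longrightarrow S_N^{N/2}
\]
by \eqref{limiteMezzaNormap}, and the pointwise dominant obtained above (together with $|z^m_{i,p}|\leq 1$ on the vanishing complement of $\widetilde C^m_{i,p}$) allows dominated convergence to conclude $\int_{\R^N}|z_i|^{2^*}dy=S_N^{N/2}=\int_{\R^N}U^{2^*}dy$. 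Combined with $|z_i|\leq U$ pointwise, this forces $|z_i|=U$ a.e., hence everywhere by continuity, and since $(-1)^iz_i\geq 0$ we conclude $(-1)^iz_i\equiv U$; uniqueness of the limit then guarantees that the convergence holds for the whole family. The main obstacle is precisely the $i\geq 1$ identification: because the natural normalization point $s^m_{i,p}(M^m_{i,p})^{(p-1)/2}$ collapses to the origin which is excluded from the domain of $C^2_{loc}$ convergence, one cannot pin down the dilation parameter of the limiting $U$-bubble by a value at the origin, and the resolution is to squeeze $z_i$ between $U$ pointwise from above and $U$ in the integral sense via the scale-invariant $L^{N(p-1)/2}$-norm.
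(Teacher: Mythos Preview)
Your argument is correct, and for the case $i\geq 1$ it follows a genuinely different route from the paper. The paper, after obtaining the $C^2_{loc}(\R^N\setminus\{0\})$ limit $z$, invokes Lemmas~6 and~7 of \cite{Iacopetti} to show directly that $z$ extends across the origin to a $C^1(\R^N)$ weak solution of the critical equation satisfying $z(0)=1$ and $\nabla z(0)=0$; the identification $z=U$ then follows at once from uniqueness for the radial initial value problem. You instead avoid any appeal to \cite{Iacopetti}: you scale the pointwise Atkinson--Peletier type bound of Proposition~\ref{StimaFondamentaleNegativa} and let $\alpha\uparrow(N-2)/2$ to get $|z_i|\leq U$, remove the singularity, and then pin down the dilation parameter by combining this pointwise inequality with the scale-invariant integral identity $\int_{\R^N}|z_i|^{2^*}=S_N^{N/2}=\int_{\R^N}U^{2^*}$ coming from \eqref{limiteMezzaNormap} via dominated convergence. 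The squeeze $|z_i|\leq U$ together with $\int|z_i|^{2^*}=\int U^{2^*}$ forces $|z_i|=U$ without ever identifying $z_i(0)$. Your approach is more self-contained within the paper (it uses only Proposition~\ref{StimaFondamentaleNegativa} and \eqref{limiteMezzaNormap}, both already established), while the paper's approach is shorter but outsources the delicate step of recovering the value at the origin to an external reference. One small remark: in your dominated convergence step the majorant $[1+c_\alpha|y|^2]^{-(N-2)N(p-1)/4}$ depends on $p$, but for $p$ close to $p_S$ the exponent is bounded below away from $N/2$, so a single $L^1(\R^N)$ dominant is available; this is routine but worth stating.
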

\begin{proof}
The proof of \eqref{zppiuN} is standard. Indeed, since the functions $z\apice{m}_{0,p}$ are uniformly bounded, satisfy \eqref{equazionizp} in $\widetilde B\apice{m}_{0,p}$
and property $(\mathcal A\apice{m}_1)$ holds, we have that the limit of the domain $\widetilde B\apice{m}_{0,p}$ is the whole $\R^N$ and $z\apice{m}_{0,p}$ converge in $C^2_{loc}(\R^N)$ to a solution $z$ of \eqref{criticalLimitEquation}. The limit function $z$ has finite energy by Fatou's lemma, it is positive by \eqref{zippositive}  so it must necessarily be the function $U$ in \eqref{UNgeq3}.\\

Similarly we prove \eqref{zpmenoN}. Indeed  the rescaled functions $z\apice{m}_{i,p}$, $i=1,\ldots, m-1$, are uniformly bounded and solve  \eqref{equazionizp} in $\widetilde B\apice{m}_{i,p}$. 
The limit of the domains $\widetilde B\apice{m}_{i,p}$ is now $\R^N\setminus\{0\}$, this follows by the property $(\mathcal C\apice{m}_{m-1})$ in the case $i=m-1$ and 
by the properties 
$(\mathcal A\apice{m}_{i+1})$ and $(\mathcal C\apice{m}_i)$ in the other cases. By standard elliptic estimates, we have that $(-1)^iz\apice{m}_{i,p}\to z$ in $C^2_{loc}(\R^N\setminus\{0\})$ where $z$ is positive (by \eqref{zippositive}) radial, solves
\[
-\Delta z=z^{p_S}\qquad\mbox{in $\R^N\setminus\{0\}$}
\]
and (as for the previous case) has finite energy.\\
Exactly as in Lemma 6 and Lemma 7 of \cite{Iacopetti} we get that $z$ can be extended to a $C^1(\R^N)$ function such that $z(0)=1$, $\nabla z(0)=0$ and is a weak solution of \eqref{criticalLimitEquation} (in the whole $\R^N$). Hence $z$ must be the function $U$ of \eqref{UNgeq3}.
\end{proof}

\

\begin{proof}[Proof of Theorem \ref{theorem:analisiAsintoticaCasoNgeq3}]
The proof is similar to the one of Proposition  \ref{Propos:zModificata}. Just observe that $z\apice{m}_{i,p}$ is uniformly bounded in the whole rescaling of the tail set $\widetilde{T}\apice{m}_{i,p}$ in \eqref{zeta}, since it is uniformly bounded in $\widetilde{B}\apice{m}_{i,p}$ (as already observed in the proof of Proposition \ref{Propos:zModificata}) and moreover \eqref{limiterappepsilon} holds true. Observe also that the limit of the domain  $\widetilde{T}\apice{m}_{0,p}=(M\apice{m}_{0,p})^{\frac{p-1}{2}}B$ is clearly $\R^N$ (by \eqref{epsilonpmN3}), while  the limit of the domains  $\widetilde{T}\apice{m}_{i,p}$, when $i=1,\ldots, m-1$, is the set $\R^N\setminus \{0\}$  (by  \eqref{epsilonpmN3} and  property \eqref{Cmi}). The result then follows similarly as in the proof of Proposition \ref{Propos:zModificata}.
\end{proof}

\

We conclude the section with an estimate that will be important  throughout  the proof of Theorem \ref{teoPrincipaleMorse}: 
\begin{proposition}
Let $m\in\N^+$. There exist $\delta=\delta(m)>0$ and $C>0$ (independent of $m$) such that
\begin{equation}\label{Q3Nge3}
f\apice{\emph m}_p(|y|):=|y|^2|u\apice{\emph m}_p(y)|^{p-1}\leq C \qquad\mbox{ for any} \ y\in B \ \mbox{ and } \ \ p>p_S-\delta.
\end{equation}
\end{proposition}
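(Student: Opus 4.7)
The plan is to decompose $B$ into the $m$ nodal regions $B^m_{i,p}$ and to bound $f^m_p$ on each separately. The crucial elementary observation is that, setting $\beta_p := (N-2)(p-1)/2$, one has $\beta_p \to \beta_{p_S} = 2 > 1$ as $p\to p_S$, so for $p$ close enough to $p_S$ the scalar function
\[
g_p(t) := \frac{t}{(1+t)^{\beta_p}}, \qquad t\in[0,+\infty),
\]
has a maximum $(\beta_p - 1)^{\beta_p-1}/\beta_p^{\beta_p}$ which is continuous in $\beta_p$, hence uniformly bounded by a constant depending only on $N$. This reduces everything to rewriting each fundamental bound in the form $g_p(t)$.

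On $B^m_{0,p}$ I would apply Proposition \ref{StimaFondamentalePositiva} directly. Raising it to the power $p-1$ and multiplying by $|y|^2$ gives
\[
|y|^2|u\apice{m}_p(y)|^{p-1} \leq N(N-2)\,g_p\!\left(\frac{(M\apice{m}_{0,p})^{p-1}}{N(N-2)}|y|^2\right),
\]
which is uniformly bounded by the observation above.

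On the remaining nodal regions $B^m_{i,p}$ with $i\geq 1$ I would split the annulus into the outer part $C^m_{i,p}$ and the inner complement $\{r^m_{i,p} \leq |x| \leq \gamma^{-1/N} s^m_{i,p}\}$. By Corollary \ref{cor:RmiSatisfied} the hypothesis \eqref{Rmi} in Proposition \ref{StimaFondamentaleNegativa} is automatically satisfied, so on $C^m_{i,p}$ the same rewriting argument applies and yields
\[
|y|^2|u\apice{m}_p(y)|^{p-1} \leq \frac{N(N-2)^2}{2\alpha}\,g_p\!\left(\frac{2\alpha}{N(N-2)^2}(M\apice{m}_{i,p})^{p-1}|y|^2\right),
\]
again uniformly bounded. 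On the inner complement one uses only the trivial bounds $|u\apice{m}_p(y)| \leq M\apice{m}_{i,p}$ and $|y|\leq \gamma^{-1/N} s\apice{m}_{i,p}$ to obtain
\[
|y|^2|u\apice{m}_p(y)|^{p-1} \leq \gamma^{-2/N}\bigl(s\apice{m}_{i,p}(M\apice{m}_{i,p})^{(p-1)/2}\bigr)^{2},
\]
which tends to $0$ as $p\to p_S$ by property \eqref{Bmi} (Propositions \ref{prop:Bm-1ValePerOgnim} and \ref{prop:BmiValePerOgnim}), hence is bounded.

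There is no serious obstacle; the only delicate point is ensuring that the constants from $g_p$ and from \eqref{Bmi} are uniform in $p$ near $p_S$, which is immediate since $\beta_p \to 2 > 1$ continuously and since a finite number $m$ of nodal regions is being handled. Taking $\delta = \delta(m)$ smaller than all the thresholds produced by Propositions \ref{StimaFondamentalePositiva}, \ref{StimaFondamentaleNegativa} and by the $(\mathcal B\apice{m}_i)$ convergences yields the claim with a constant $C$ depending only on $N$ (after absorbing the harmless factor $\gamma^{-2/N}$ and the constant $\alpha$, e.g. $\alpha = (N-2)/4$).
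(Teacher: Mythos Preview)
Your proposal is correct and follows essentially the same approach as the paper: decompose $B$ into nodal regions, apply Proposition~\ref{StimaFondamentalePositiva} on $B\apice{m}_{0,p}$, split each $B\apice{m}_{i,p}$ ($i\geq1$) into $C\apice{m}_{i,p}$ (handled via Proposition~\ref{StimaFondamentaleNegativa} and Corollary~\ref{cor:RmiSatisfied}) and the inner annulus (handled via \eqref{Bmi}). The only cosmetic difference is that the paper bounds $s^2/(1+cs^2)^{\beta_p}$ by replacing the exponent $\beta_p$ with the fixed lower bound $3/2$, whereas you compute the exact maximum of $g_p$ and note its continuity in $\beta_p$; both arguments are equivalent.
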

\begin{proof}
\emph{Case I}:\quad $r:=|y|\in[0,r\apice{m}_{1,p}]$.\\
By Proposition \ref{StimaFondamentalePositiva} one has that $f\apice{m}_p(r)\leq \widetilde{g}_{p}\big(r(M\apice{m}_{0,p})^{\frac{p-1}{2}}\big)$, where for $s\in[0,+\infty)$
\[
\widetilde{g}_{p}(s):=\frac{s^2}{\left(1+\frac{1}{N(N-2)}s^2\right)^{\frac{(N-2)(p-1)}{2}}}.
\]
Since $\frac{(N-2)(p-1)}{2}\geq\frac32$ for $p$ sufficiently close to $p_S$, it can be easily seen that there exist $\delta>0$ and $C>0$ such that
\[
\widetilde{g}_{p}(s)\leq \frac{s^2}{\left(1+\frac{1}{N(N-2)}s^2\right)^{\frac{3}{2}}}\leq C\quad \mbox{ for any $s\in[0,+\infty)$ and $p>p_S-\delta$.}
\]
This concludes the proof of \emph{Case I}.

\

\emph{Case II}:\quad $r:=|y|\in(r\apice{m}_{i,p}, r\apice{m}_{i+1,p}]$, for some $i=1,\ldots, m-1$.\\
Let us fix $\alpha\in(0,\frac{N-2}{2})$ and consider $\gamma=\gamma(\alpha,m)$ defined in Proposition \ref{StimaFondamentaleNegativa}. Then for any $r\in(r\apice{m}_{i,p},\gamma^{-\frac{1}{N}}s\apice{m}_{i,p}]$ we use the property  \eqref{Bmi} (which is satisfied by Propositions \ref{prop:Bm-1ValePerOgnim}-\ref{prop:BmiValePerOgnim}) to prove that:
\[
f\apice{m}_p(r)\leq \gamma^{-\frac{2}{N}} (s\apice{m}_{i,p})^2|u\apice{m}_p(r)|^{p-1}\overset{\eqref{Mpm}}{\leq} \gamma^{-\frac{2}{N}} (s\apice{m}_{i,p})^2(M\apice{m}_{i,p})^{p-1}\overset{\eqref{Bmi}}{\underset{p\to p_S}{\longrightarrow}}0.
\]
Then clearly there exists  $C>0$ and there exists $\delta_i=\delta_{i}(m)>0$ such that $f\apice{m}_p(r)\leq C$, for any $r\in(r\apice{m}_{i,p},\gamma^{-\frac{1}{N}}s\apice{m}_{i,p}]$ and for any $p\geq p_S-\delta_i$.\\
For $r\in(\gamma^{-\frac{1}{N}}s\apice{m}_{i,p}, r\apice{m}_{i+1,p}]$ by Proposition \ref{StimaFondamentaleNegativa} and Corollary \ref{cor:RmiSatisfied}
\[
f\apice{m}_p(r)\leq \widehat{g}_p\big(r(M\apice{m}_{i,p})^{\frac{p-1}{2}}\big),
\]
where for $s\in[0,+\infty)$
\[
\widehat{g}_p(s):=\frac{s^2}{\left(1+\frac{2\alpha}{N(N-2)^2}s^2\right)^{\frac{(N-2)(p-1)}{2}}}.
\]
Exactly as in \emph{Case I}, fixing $\delta>0$ such that $\frac{(N-2)(p-1)}{2}\geq \frac32$ it turns out that 
\[
\widehat{g}_p(s)\leq C\qquad\quad\mbox{for any $s\in[0,+\infty)$ and $p>p_S-\delta$,}
\]
and this ends the proof of \emph{Case II}.
\end{proof}

\

\

\section{Approximations of eigenvalues and auxiliary weighted problems}\label{Section:Approximation}

In the following we summarize the construction and the results obtained in Sections $3$ and $4$ of \cite{DeMarchisIanniPacellaMathAnn}.
Along all the section $m\in N^+$ and $p\in (1,p_S)$ are fixed and $u\apice{m}_p$ is the  radial solution of \eqref{problem} having $m$ nodal regions, satisfying  the sign condition \eqref{MaxInZero} and already studied in the previous section.\\
\\
Let $L\apice{m}_{p}: H^2(B)\cap H^1_0(B)\rightarrow L^2(B)$ be the linearized operator at $u\apice{m}_p$, namely
\begin{equation}\label{linearizedOperator} L\apice{m}_{p}( v): =   -\Delta v-p|u\apice{m}_p(x)|^{p-1}v.
\end{equation}
The Dirichlet eigenvalues of $L\apice{m}_p$ in $B$, counted with their multiplicity, are
\[\begin{array}{ll} &\mu_1(m,p)< \mu_2(m,p)\leq\ldots\leq\mu_i(m,p)\leq\ldots,\\& \mu_i(m,p)\rightarrow +\infty\quad  \mbox{ as }i\rightarrow +\infty.\end{array}\]
Among these there are the \emph{radial} Dirichlet eigenvalues, which also form a sequence, denoted by:
\[\beta_i(m,p), \quad i\in\N^+.\]
As in Section \ref{sectionLowerBounMorse} the \emph{Morse index} of $u\apice{m}_p$ is denoted by $\mathsf{m}(u\apice{m}_p)$, while the \emph{radial Morse index} of $u\apice{m}_p$ (namely the number of negative radial eigenvalues of $L\apice{m}_p$) is denoted by $\mathsf{m_{rad}}(u\apice{m}_p)$.
\\
\\
By Theorem \ref{teo:AftPacGen} we know that
\begin{equation}\label{LemmaMorseIndexRadiale}
\mathsf{m}(u\apice{m}_p)\geq m+N(m-1) 
\end{equation}
and by  Theorem \ref{teoMorseRadiale} that 
\begin{equation}\label{LemmaMorseIndexRadiale2}
\mathsf{m_{rad}}(u\apice{m}_p)=m.
\end{equation} 

As in \cite{DeMarchisIanniPacellaMathAnn}, in order to compute the Morse index of $u\apice{m}_p$, we approximate the ball $B$ with the annuli:
\begin{equation}\label{def:anello}
A_n:=\{x\in \mathbb R^N\ :\ \frac{1}{n}<|x|<1 \}, \quad n\in \mathbb N^+,
\end{equation}
and we denote by \[{\mu_i^n}(m,p), \quad i\in\N^+\] the Dirichlet eigenvalues of $L\apice{m}_p$ in $A_n$ counted according to their multiplicity and 
by \[\beta_i^n(m,p), \quad i\in\N^+\] the radial Dirichlet eigenvalues of $L\apice{m}_p$ in $A_n$ counted with their multiplicity. Finally we denote by 
\begin{eqnarray}
& k^n_p(m) := \#\{\mbox{negative eigenvalues $\mu_i^n(m,p)$ of $L\apice{m}_p$ in $A_n$}\},
\\
& k^n_{p,rad}(m) := \#\{\mbox{negative radial eigenvalues $\beta_i^n(m,p)$ of $L\apice{m}_p$ in $A_n$}\}.
\end{eqnarray}
As proved in \cite{DeMarchisIanniPacellaMathAnn} (Lemma $3.2$ and Lemma $3.3$ therein) the following holds:
\begin{lemma}\label{lemma:morseProblemiSenzaPesoAnello}
For any fixed $m\in\N^+$ and any fixed $p\in (1,p_S)$ we have:
\begin{equation*}\label{eq:convEigenvalues}
\mu_i^n(m,p) \searrow \mu_i(m,p)\ \  \ \mbox{ and } \ \ \ \beta_i^n(m,p) \searrow \beta_i(m,p)\ \mbox{ as }\ n\rightarrow +\infty,\quad\forall\,i\in\N^+.
\end{equation*}
Hence there exists $n_p'=n_p'(m)\in \N^+$ such that
\[\mathsf{m}(u\apice{\emph m}_p)=k^n_p(m) \quad \mbox{ and }\quad \mathsf{m_{rad}}(u\apice{\emph m}_p)=k^n_{p,rad}(m),\ \mbox{ for }n\geq  n'_p.\]
\end{lemma}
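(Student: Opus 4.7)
The plan is to treat this as a standard domain-approximation result for the Dirichlet spectrum, combining monotonicity of eigenvalues under domain inclusion with a density (zero-capacity of a point) argument, and then transferring the convergence to the counting functions of negative eigenvalues.

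First, I would establish the monotonicity $\mu_i^n(m,p)\geq \mu_i^{n+1}(m,p)\geq \mu_i(m,p)$ directly from the Courant--Fischer min-max formula. Since $A_n\subset A_{n+1}\subset B$, extending any $v\in H^1_0(A_n)$ by zero gives an element of $H^1_0(A_{n+1})\subset H^1_0(B)$ whose Dirichlet integral and weighted $L^2$ mass (with weight $p|u\apice{m}_p|^{p-1}$) are preserved. Hence taking the infimum over $i$-dimensional subspaces over a larger class produces smaller values. This shows $(\mu_i^n(m,p))_n$ is monotone non-increasing and bounded below by $\mu_i(m,p)$, and the analogous inequality holds for the radial eigenvalues $\beta_i^n(m,p)$ since the zero-extension of a radial function is still radial.

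Next I would prove $\limsup_n \mu_i^n(m,p)\leq \mu_i(m,p)$ by a truncation/capacity argument. The key point is that for $N\geq 2$ a single point has zero $H^1$-capacity, so $\bigcup_n H^1_0(A_n)$ is dense in $H^1_0(B)$. Concretely, pick a logarithmic (for $N=2$) or power-type (for $N\geq 3$) radial cutoff $\eta_n$ with $\eta_n\equiv 0$ on $B_{1/n}$, $\eta_n\equiv 1$ outside a slightly larger ball, and $\|\nabla \eta_n\|_{L^2(B)}\to 0$. Then for any $v\in H^1_0(B)$, $\eta_n v\in H^1_0(A_n)$ and $\eta_n v\to v$ in $H^1_0(B)$. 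Taking an $i$-dimensional subspace $V_i\subset H^1_0(B)$ realising $\mu_i(m,p)$ in the min-max, its image $\eta_n V_i$ is an $i$-dimensional subspace of $H^1_0(A_n)$ (for $n$ large) whose quadratic form converges to that on $V_i$; plugging into min-max yields $\mu_i^n(m,p)\leq \mu_i(m,p)+o(1)$. The same cutoff (chosen radial) works for the radial spectrum, giving $\beta_i^n(m,p)\searrow \beta_i(m,p)$.

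For the second assertion, set $k:=\mathsf{m}(u\apice{m}_p)$, so that $\mu_i(m,p)<0$ for $i\leq k$ and $\mu_{k+1}(m,p)\geq 0$ (with the convention that $\mu_{k+1}=+\infty$ if the negative spectrum is exhausted). By the convergence just proved, there exists $n_p'$ such that for $n\geq n_p'$ we have $\mu_i^n(m,p)<0$ for $i=1,\dots,k$, while $\mu_{k+1}^n(m,p)\geq \mu_{k+1}(m,p)\geq 0$ by monotonicity. Hence $k_p^n(m)=k=\mathsf{m}(u\apice{m}_p)$. The identical argument applied to the radial eigenvalues $\beta_i^n(m,p)$, using $\mathsf{m_{rad}}(u\apice{m}_p)$ in place of $\mathsf{m}(u\apice{m}_p)$, gives $k^n_{p,rad}(m)=\mathsf{m_{rad}}(u\apice{m}_p)$ for $n\geq n_p'$ (after possibly enlarging $n_p'$ to serve both).

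The only non-routine point is the capacity/cutoff construction, which requires separate care in dimensions $N=2$ and $N\geq 3$; since this is precisely the content reproduced from \cite{DeMarchisIanniPacellaMathAnn}, one can alternatively cite Lemmas 3.2--3.3 there and just record the consequence. I expect that the main obstacle in a fully self-contained write-up would be the explicit construction of the cutoff and verification that $\|\nabla \eta_n\|_{L^2(B)}\to 0$ in the low-dimensional case, since in $N=2$ one cannot use a compactly supported bump near the origin with bounded gradient and must resort to a logarithmic cutoff.
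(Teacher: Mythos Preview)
Your proposal is correct and follows the standard domain-monotonicity plus zero-capacity argument that underlies this result; the paper itself does not reprove the lemma but simply cites Lemmas~3.2--3.3 of \cite{DeMarchisIanniPacellaMathAnn}, which is exactly the alternative you mention at the end. Your deduction of the second assertion from the convergence is also the natural one: monotonicity already forces $\mu_{k+1}^n\geq\mu_{k+1}\geq 0$, and the convergence pushes the first $k$ eigenvalues below zero for large $n$.
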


In order to make a decomposition of the spectrum of $L\apice{m}_p$ we consider the auxiliary weighted linear operator $\widetilde{{L^n}}\apice{m}_{\!\!\!\! p}: H^2(A_n)\cap H^1_0(A_n)\rightarrow L^2(A_n)$ defined by:
\begin{equation}
\label{weightedOp}
\widetilde{{L^n}}\apice{m}_{\!\!\!\! p} (v): = |x|^2 \left(  -\Delta v-p|u\apice{m}_p(x)|^{p-1}v \right), \ \ x\in A_n,
\end{equation}
and  denote by
\[\widetilde{\mu_i^n}(m,p), \quad i\in\N^+\]
its eigenvalues  counted with their multiplicity. Observe that the corresponding eigenfunctions $h$ satisfy
\[\left\{
\begin{array}{lr}
-\Delta h(x)-p|u\apice{m}_p(x)|^{p-1}h(x)=\widetilde{\mu_i^n}(m,p)\, \frac{h(x)}{|x|^2} \ \ \ \ x\in A_n
\\
\\
h=0 \ \ \ \ \mbox{on } \partial A_n.
\end{array}
\right.
\]
Since $u\apice{m}_p$ is radial we also consider the following linear operator  $\widetilde{{L^n}}\apice{m}_{\!\!\!\!p,rad}: H^2((\frac{1}{n},1))\cap H^1_0((\frac{1}{n},1))\rightarrow L^2((\frac{1}{n},1))$
\begin{equation}
\label{Ltilderad}
\widetilde{{L^n}}\apice{m}_{\!\!\!\!p,rad}( v): = r^2\left(  -v''-\frac{(N-1)}{r}v'-p|u\apice{m}_p(r)|^{p-1}v\right), \ \ \ \ r\in (\frac{1}{n},1)
\end{equation}
and denote by \[\widetilde{\beta_i^n}(m,p), \quad i\in\N^+\] its eigenvalues  counted with their multiplicity. Obviously $\widetilde{\beta_i^n}(m,p)$ are nothing else than the radial eigenvalues of $\widetilde{{L^n}}\apice{m}_{\!\!\!\! p}$. Let us also set
\begin{eqnarray}\label{kappatilde}
&\widetilde{k_p^n}(m) := \#\{\mbox{negative eigenvalues $\widetilde{\mu_i^n}(m,p)$ of $\widetilde{{L^n}}\apice{m}_{\!\!\!\! p}$}\},
\\\label{kappatilderad}
&\widetilde{k^n}_{\!\!\!\!p,rad}(m) := \#\{\mbox{negative eigenvalues $\widetilde{\beta_i^n}(m,p)$ of the operator $
\widetilde{{L^n}}\apice{m}_{\!\!\!\!p,rad}$}\}.
\end{eqnarray}

Denoting by $\sigma(\cdot)$ the spectrum of a linear operator we recall that the following decomposition holds:
\begin{equation}  \label{lemma:decompositionOfTheSpectrum}
\sigma(\widetilde{{L^n}}\apice{m}_{\!\!\!\! p})
=\sigma(\widetilde{{L^n}}\apice{m}_{\!\!\!\!p,rad})+\sigma(-\Delta_{S^{N-1}}), \quad \mbox{for any}\;n\in\mathbb N^+,
\end{equation}
where $\Delta_{S^{N-1}}$ is the Laplace-Beltrami operator on the unit sphere $S^{N-1}$, $N\geq 3$. The proof of \eqref{lemma:decompositionOfTheSpectrum} is not difficult, it can be found for example in \cite{GladialiGrossiPacellaSrikanth}. So  \eqref{lemma:decompositionOfTheSpectrum} means that, for any $n\in\mathbb N^+$:
\begin{equation}\label{decomposizioneAutovalori} \widetilde{\mu_j^n}(m,p)\ =\ \widetilde{\beta_i^n}(m,p)\ +\ \lambda_k, \ \ \mbox{ for } i,j\in\N^+,\ \ k\in\N,
\end{equation}
where $\lambda_k$ are the eigenvalues of $-\Delta_{S^{N-1}}$, $N\geq 3$. Note that in \eqref{decomposizioneAutovalori} only $\widetilde{\beta_i^n}(m,p)$
depend on the exponent
$p$, while the eigenvalues $\lambda_k$
depend only on the dimension
$N$ and it is known (\cite[Proposition 4.1]{BerezinShubin}) that
\begin{equation}\label{lambdak}
\lambda_k=k(k+N-2),\ \ k\in\N,
\end{equation}
with multiplicity
\begin{equation}\label{multiplicity}
N_k-N_{k-2},
\end{equation}
where
\begin{equation}\label{N_h}
N_h:=\binom{N-1+h}{N-1}=\frac{(N-1+h)!}{(N-1)!h!},\ \mbox{ if }h\geq 0, \ \ \ N_h=0,\ \mbox{ if }h< 0.
\end{equation}

Next result shows the equivalence between the number of the negative eigenvalues of the linearized operator $L\apice{m}_p$ in $A_n$ and that of the weighted operators:
\begin{lemma} \label{lemmaEquivTraPesoESenzaPeso} We have: \[k^n_p(m)=\widetilde{k_p^n}(m)\qquad\mbox{ and }\qquad k^n_{p,rad}(m)=
\widetilde{k^n}_{\!\!\!\!p,rad}(m)
.\]
\end{lemma}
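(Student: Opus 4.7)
The plan is to observe that both operators $L\apice{m}_p$ (restricted to $A_n$) and $\widetilde{{L^n}}\apice{m}_{\!\!\!\!p}$ are generated by the \emph{same} bilinear form on $H^1_0(A_n)$, namely
\[
Q(v,w):=\int_{A_n}\bigl(\nabla v\cdot\nabla w - p|u\apice{m}_p(x)|^{p-1} v\,w\bigr)\,dx,
\]
and that they differ only by the choice of inner product placed on the right-hand side of the eigenvalue problem: the standard $L^2(A_n)$-product in the case of $L\apice{m}_p$, and the weighted product $\langle v,w\rangle_\ast:=\int_{A_n} \frac{vw}{|x|^2}\,dx$ in the case of $\widetilde{{L^n}}\apice{m}_{\!\!\!\!p}$. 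Since on $A_n$ one has $\frac{1}{n}\leq|x|\leq 1$, the weight $1/|x|^2$ is bounded from above and below by positive constants, hence $\langle\cdot,\cdot\rangle_\ast$ is a genuine inner product on $L^2(A_n)$ equivalent to the standard one, and both eigenvalue problems fit into the framework of compact self-adjoint operators with Courant--Fischer type min-max characterizations.

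First I would write the two Courant--Fischer formulas
\[
\mu_i^n(m,p)=\min_{\substack{V\subset H^1_0(A_n)\\ \dim V=i}}\max_{v\in V\setminus\{0\}}\frac{Q(v,v)}{\|v\|_{L^2(A_n)}^2},\qquad
\widetilde{\mu_i^n}(m,p)=\min_{\substack{V\subset H^1_0(A_n)\\ \dim V=i}}\max_{v\in V\setminus\{0\}}\frac{Q(v,v)}{\langle v,v\rangle_\ast}.
\]
Then I would exploit the elementary observation that for any $v\neq 0$ both denominators are strictly positive, so $\mu_i^n(m,p)<0$ (resp.\ $\widetilde{\mu_i^n}(m,p)<0$) is equivalent to the existence of an $i$-dimensional subspace $V$ on which $Q$ is negative definite. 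Because this last condition depends only on the quadratic form $Q$ and on the ambient space $H^1_0(A_n)$, not on the chosen inner product, the number of indices $i$ for which $\mu_i^n(m,p)<0$ coincides with the number for which $\widetilde{\mu_i^n}(m,p)<0$. This gives $k^n_p(m)=\widetilde{k_p^n}(m)$.

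For the second identity I would run the same argument verbatim but restrict the trial subspaces $V$ to lie in the space of radial functions in $H^1_0(A_n)$, and use that the ordinary differential operator on $(1/n,1)$ associated to $\widetilde{{L^n}}\apice{m}_{\!\!\!\!p,rad}$ arises exactly from $Q$ via separation of variables with weight $r^2$ (hence $\frac{1}{|x|^2}$ in the $L^2$ product against $r^{N-1}dr$). The same Morse-index argument then yields $k^n_{p,rad}(m)=\widetilde{k^n}_{\!\!\!\!p,rad}(m)$. No real obstacle is expected here; the only point that deserves care is checking that the two bilinear forms generate compact resolvents on $A_n$ (so that the min-max principle applies in full), which follows at once from the equivalence of the two $L^2$-type norms on the annulus and from the standard Rellich compactness of $H^1_0(A_n)\hookrightarrow L^2(A_n)$.
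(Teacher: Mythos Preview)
Your argument is correct and is essentially the standard one: the negative index of the quadratic form $Q$ on $H^1_0(A_n)$ (respectively on its radial subspace) is an intrinsic quantity, and by the Courant--Fischer characterization it coincides with the number of negative eigenvalues regardless of which equivalent inner product is used on the right-hand side. The paper itself does not give a proof here but simply refers to \cite[Lemma~4.2]{DeMarchisIanniPacellaMathAnn}, where precisely this min-max argument is carried out; so your approach matches what the authors have in mind.
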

\begin{proof}
See \cite[Lemma 4.2]{DeMarchisIanniPacellaMathAnn}
\end{proof}

Combining Lemma \ref{lemma:morseProblemiSenzaPesoAnello}, Lemma \ref{lemmaEquivTraPesoESenzaPeso}, \eqref{LemmaMorseIndexRadiale} and \eqref{LemmaMorseIndexRadiale2} we get:

\begin{proposition}\label{proposition:MorseRadialeConPeso} Let $\in\N^+$ and $p\in (1,p_S)$. There exists $n'_p=n'_p(m)\in\N^+$ such that
\[\mathsf{m}(u\apice{\emph m}_p)=\widetilde{k_p^n}(m)\qquad\mbox{ and }\qquad \mathsf{m_{rad}}(u\apice{\emph m}_p)=\widetilde{k^n}_{\!\!\!\!p,rad}(m),\quad\mbox{  for }n\geq  n'_p.\]
 Hence
\begin{equation}
\label{corb}\widetilde{k_p^n}(m)\geq m+N(m-1)\qquad\mbox{ and }\qquad\widetilde{k^n}_{\!\!\!\!p,rad}(m)=m,\quad\mbox{ for }n\geq  n'_p.
\end{equation}
\end{proposition}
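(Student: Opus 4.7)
The statement is essentially a bookkeeping corollary that assembles results already proved earlier in the excerpt, so the plan is to chain them together rather than do new work.

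The strategy is as follows. First, invoke Lemma \ref{lemma:morseProblemiSenzaPesoAnello}, which guarantees the existence of an integer $n_p'=n_p'(m)$ such that for every $n\geq n_p'$ the monotone decreasing approximation of the eigenvalues on the annulus $A_n$ has already stabilized the count of negative ones, giving $\mathsf{m}(u\apice{m}_p)=k_p^n(m)$ and $\mathsf{m_{rad}}(u\apice{m}_p)=k^n_{p,rad}(m)$. Second, apply Lemma \ref{lemmaEquivTraPesoESenzaPeso} to pass from the unweighted counts $k_p^n(m)$, $k^n_{p,rad}(m)$ to the weighted counts $\widetilde{k_p^n}(m)$, $\widetilde{k^n}_{\!\!\!\!p,rad}(m)$, which are the ones needed to exploit the spectral decomposition \eqref{lemma:decompositionOfTheSpectrum} in later sections. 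Combining these two identities yields the first displayed equalities of the proposition.

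For the second displayed assertion \eqref{corb}, I would just substitute the two identities already obtained into the a priori bounds recalled at the beginning of Section \ref{Section:Approximation}. Namely, the general lower bound \eqref{LemmaMorseIndexRadiale}, proved via Theorem \ref{teo:AftPacGen} in the preceding section, reads $\mathsf{m}(u\apice{m}_p)\geq m+N(m-1)$, so the identity $\mathsf{m}(u\apice{m}_p)=\widetilde{k_p^n}(m)$ immediately transfers this bound to $\widetilde{k_p^n}(m)$. Likewise, Theorem \ref{teoMorseRadiale} (the result of Bartsch--Weth and Harrabi--Rebhi--Selmi recalled as \eqref{LemmaMorseIndexRadiale2}) gives $\mathsf{m_{rad}}(u\apice{m}_p)=m$, which combined with $\mathsf{m_{rad}}(u\apice{m}_p)=\widetilde{k^n}_{\!\!\!\!p,rad}(m)$ gives the equality $\widetilde{k^n}_{\!\!\!\!p,rad}(m)=m$ for all $n\geq n_p'$.

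There is essentially no obstacle in the proof, as all the difficult work has been done in the quoted lemmas and theorems; the only minor point to keep in mind is that $n_p'=n_p'(m)$ must be chosen large enough so that both of the approximation identities of Lemma \ref{lemma:morseProblemiSenzaPesoAnello} hold simultaneously (which is obtained by taking the maximum of the two thresholds, each finite since the convergences $\mu_i^n\searrow\mu_i$ and $\beta_i^n\searrow\beta_i$ are monotone and the relevant eigenvalues $\mu_i(m,p)$, $\beta_i(m,p)$ are bounded away from zero). Hence the proposition follows at once.
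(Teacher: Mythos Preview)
Your proposal is correct and matches the paper's approach exactly: the paper itself introduces this proposition with the sentence ``Combining Lemma \ref{lemma:morseProblemiSenzaPesoAnello}, Lemma \ref{lemmaEquivTraPesoESenzaPeso}, \eqref{LemmaMorseIndexRadiale} and \eqref{LemmaMorseIndexRadiale2} we get,'' which is precisely the chain of results you assemble. There is no additional argument in the paper beyond this combination.
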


Because of the decomposition  \eqref{decomposizioneAutovalori} and of Proposition \ref{proposition:MorseRadialeConPeso} it is clear that in order to evaluate the Morse index $\mathsf{m}(u\apice{m}_p)$ (i.e. to prove Theorem \ref{teoPrincipaleMorse}) we have to estimate the negative eigenvalues $\widetilde{\beta_i^n}(m,p)$ of the weighted operator $\widetilde{{L^n}}\apice{m}_{\!\!\!\!p,rad}$ which, by \eqref{corb}, are only the first $m$ ones.

\

We conclude this section by an  estimate of the last negative eigenvalue $\widetilde{\beta_m^n}(m,p)$. This result generalizes to any $m\in\N^+$ the analogous one already proved in \cite[Proposition 4.5]{DeMarchisIanniPacellaMathAnn} in the case $m=2$.

\

We emphasize that an estimate of the other negative eigenvalues $\widetilde{\beta_i^n}(m,p)$, $i=1,\ldots, m-1$, is much more difficult and it will be the object of the next section.

\

\begin{proposition}\label{LemmaStimeAutovaloriRadialeConPeso Iparte}
Let $m\in\N^+$ and $p\in (1,p_S)$. Let $n''_p=n''_p(m):=[\frac1{r\apice{\emph m}_{1,p}}]+1$, where $r\apice{\emph m}_{1,p}$ is  the first nodal radius of $u\apice{\emph m}_p$ as defined in \eqref{rp}.
Then
\[\widetilde{\beta_m^n}(m,p)> -(N-1)\ \ \mbox{ for any }n\geq  n''_p.\]
\end{proposition}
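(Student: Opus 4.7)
My plan is to deduce the inequality $\widetilde{\beta_m^n}(m,p) > -(N-1)$ from classical one-dimensional Sturm--Liouville oscillation theory applied to the weighted radial operator $\widetilde{L^n}\apice{m}_{p,rad}$ on the compact interval $[1/n,1]$. Writing the eigenvalue problem in standard form
\[
-(r^{N-1}v')' - p|u\apice{m}_p|^{p-1}r^{N-1}v = \tilde\beta\, r^{N-3}v,\qquad r\in(1/n,1),
\]
with Dirichlet conditions, yields a regular Sturm--Liouville problem (all coefficients smooth on $[1/n,1]$, weights $r^{N-1}$ and $r^{N-3}$ strictly positive). The key observation is that $w:=(u\apice{m}_p)'$ is a nontrivial solution of this ODE at $\tilde\beta = -(N-1)$: differentiating the radial Lane--Emden equation $u''+\tfrac{N-1}{r}u' + |u|^{p-1}u = 0$ in $r$ yields $-w''-\tfrac{N-1}{r}w'+\tfrac{N-1}{r^2}w-p|u\apice{m}_p|^{p-1}w=0$, which after multiplication by $r^2$ reads exactly $\widetilde{L^n}\apice{m}_{p,rad}(w) = -(N-1)w$. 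Conceptually, this is the radial part of the identity $L\apice{m}_p(\partial u\apice{m}_p/\partial x_i)=0$ through the spherical decomposition, since $\partial u\apice{m}_p/\partial x_i=(u\apice{m}_p)'(r)\,(x_i/|x|)$ sits in the first angular mode $k=1$ with $\lambda_1=N-1$. Of course $w$ need not vanish at the endpoints and therefore is not an eigenfunction.

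Next I locate the zeros of $w$ in $(1/n,1)$. By Proposition \ref{PropositionUnicoMaxeMin}, $|u\apice{m}_p|$ has a unique critical point in each of its $m$ nodal regions, so the only zeros of $w$ in $(0,1)$ are the $m-1$ interior critical radii $s\apice{m}_{1,p}<\dots<s\apice{m}_{m-1,p}$. The assumption $n\geq n''_p = \lfloor 1/r\apice{m}_{1,p}\rfloor + 1$ yields $1/n < r\apice{m}_{1,p} < s\apice{m}_{1,p}$, so all these critical points are contained in $(1/n,1)$ and, crucially, $w(1/n)\neq 0$.

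Let $\phi$ denote the solution of the initial value problem $\widetilde{L^n}\apice{m}_{p,rad}(\phi)=-(N-1)\phi$ with $\phi(1/n)=0$ and $\phi'(1/n)=1$. Since $\phi(1/n)=0$ while $w(1/n)\neq 0$, the solutions $\phi$ and $w$ of the same linear second-order ODE are linearly independent. By Sturm's separation theorem, between any two consecutive zeros of $\phi$ in $[1/n,1]$ there lies exactly one zero of $w$; since $w$ has only $m-1$ zeros in $(1/n,1)$, it follows that $\phi$ has at most $m-1$ zeros in $(1/n,1]$. By Sturm's oscillation theorem in its shooting form, the number of Dirichlet eigenvalues of $\widetilde{L^n}\apice{m}_{p,rad}$ which are $\leq -(N-1)$ equals precisely the number of zeros of $\phi$ in $(1/n,1]$. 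Thus at most $m-1$ eigenvalues are $\leq -(N-1)$, giving the desired strict inequality $\widetilde{\beta_m^n}(m,p) > -(N-1)$.

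The whole argument is essentially a one-dimensional nodal count and, once organized in this way, is quite short. The only step that requires some care is recognizing the identity $\widetilde{L^n}\apice{m}_{p,rad}(u\apice{m}_p)' = -(N-1)(u\apice{m}_p)'$, which explains precisely why $-(N-1)$ is the correct threshold; everything else reduces to standard Sturm--Liouville oscillation and separation on the regular problem over $[1/n,1]$.
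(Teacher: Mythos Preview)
Your proof is correct and follows essentially the same approach as the paper: both hinge on the identity $\widetilde{L^n}\apice{m}_{p,rad}\big((u\apice{m}_p)'\big)=-(N-1)(u\apice{m}_p)'$ together with the fact that $(u\apice{m}_p)'$ has exactly $m-1$ zeros in $(1/n,1)$, and both conclude via Sturm oscillation/separation on the regular problem over $[1/n,1]$. The only difference is organizational: the paper argues by contradiction, comparing $(u\apice{m}_p)'$ directly with the $m$-th eigenfunction and splitting into the cases $\widetilde{\beta_m^n}=-(N-1)$ (Sturm separation) and $\widetilde{\beta_m^n}<-(N-1)$ (Sturm comparison), whereas you package both cases at once through the shooting solution $\phi$ and the oscillation count of eigenvalues $\le -(N-1)$.
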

\begin{proof}
 Let $\eta(r):=\displaystyle{\frac{\partial u\apice{m}_p(r)}{\partial r}}$, then by the choice of $n_p''$ it follows that for any $n\geq  n''_p$ one has $\frac{1}{n}<r\apice{m}_{1,p}$ and so the function $\eta$ satisfies
\[\left\{
\begin{array}{lr}
\widetilde{{L^n}}\apice{m}_{\!\!\!\!p,rad}\ \eta=-(N-1)\eta, \quad \ r\in (\frac{1}{n},1)
\\
\vspace{-8pt}
\\
\eta(\frac{1}{n})<0 \
\\
\vspace{-15pt}
\\
\eta(1)\lessgtr 0 \mbox{ for } m {\small{\begin{array}{ll}\mbox{odd}\\\vspace{-17pt}\\\mbox{even}\end{array}}}
\end{array}
\right.
\]
(the inequalities on the boundary deriving from the assumption  $u\apice{m}_p(0)>0$ in \eqref{MaxInZero}, moreover they are strict by the Hopf's Lemma).
Moreover we know that, for $n\geq  n''_p$,  $\eta$ has exactly $m-1$ zeros in the interval $(\frac 1n,1)$, given (if $m\geq 2$) by the points $s\apice{m}_{i,p}$, $i=1,\ldots, m-1$, defined in \eqref{sp}. \\
Let $w$ be an eigenfunction of $\widetilde{{L^n}}\apice{m}_{\!\!\!\!p,rad}$ associated with the eigenvalue $\widetilde{\beta_m^n}(m,p)$, namely
\[\left\{
\begin{array}{lr}
\widetilde{{L^n}}\apice{m}_{\!\!\!\!p,rad}\ w=\widetilde{\beta_m^n}(m,p)\, w, \quad \ r\in (\frac{1}{n},1)
\\
\vspace{-8pt}
\\
w(\frac{1}{n})=0
\\
\vspace{-10pt}
\\
w(1)= 0.
\end{array}
\right.
\]
It is well known that $w$ has exactly $m$ nodal regions.\\
Assume by contradiction that $\widetilde{\beta_m^n}(m,p)\leq-(N-1)$.\\
If $\widetilde{\beta_m^n}(m,p)=-(N-1)$, then $\eta$ and $w$ are two solutions of the same Sturm-Liouville equation
\[(r^{N-1}v')'+\left[p|u_p(r)|^{p-1}r^{N-1}+\frac{\widetilde{\beta_m^n}(m,p)}{r^{3-N}}\right]v=0,\quad \ \ \ r\in (\frac{1}{n},1)\]and they are linearly independent because $\eta(1)\neq 0=w(1)$.
As a consequence (Sturm Separation Theorem) the zeros of $\eta$ and $w$ must alternate. Since $\eta $ has $m-1$ zeros, $w$ must then have $m-1$ nodal regions and this gives a contradiction.
\\
 If $-(N-1)>\widetilde{\beta_m^n}(m,p)$, then by the Sturm Comparison Theorem, $\eta$ must have a zero between  any two consecutive zeros of $w$. As a consequence, since we know that $w$ has  $m-1$ zeros in $(\frac{1}{n},1)$ and that also the boundary points $\frac{1}{n}$ and $1$ are zeros, then $\eta$ must have $m$ zeros in $(\frac{1}{n},1)$,
which gives again a contradiction.
\end{proof}

\

\

\section{Asymptotic analysis of the eigenvalues $\widetilde{\beta_i^n}(m,p)$, $i=1,\ldots, m-1$ }\label{Section:Asymptotic4}

This section is devoted to study the asymptotic behavior, as $p\rightarrow p_S$, of  the first $(m-1)$ eigenvalues $\widetilde{\beta_i^n}(m,p)$, $i=1,\ldots,m-1$,  of the auxiliary weighted radial operator $\widetilde{{L^n}}\apice{m}_{\!\!\!\!p,rad}$ defined in \eqref{Ltilderad}, when $u\apice{m}_p$ is the radial solution to \eqref{problem} having $m$ nodal regions, for $m\in N^+$, which satisfies $u_p\apice{m}(0)>0$.

\

Recall that, for each $n\in \N^+$, the operator $\widetilde{{L^n}}\apice{m}_{\!\!\!\!p,rad}$ in \eqref{Ltilderad} is defined in the annulus
\[A_n=\{x\in\R^N\ :\ \frac{1}{n}<|x|<1\}.\]

For our purposes it is convenient to  \emph{chose the number $n$} in dependence of $p$ (and $m$) as follows:
\begin{equation}
\label{np}
n\apice{m}_p:=\max\{ n'_p, n''_p,[(M\apice{m}_{0,p})^{(p-1)}]+1\},
\end{equation}
where $n'_p=n'_p(m)$ is defined in Proposition \ref{proposition:MorseRadialeConPeso}, while $n''_p=n''_p(m)$ is as  in Proposition \ref{LemmaStimeAutovaloriRadialeConPeso Iparte}.
\\
Then for any $i\in\mathbb N^+$  we consider  the family of eigenvalues defined as
\begin{equation}\label{np1}
\widetilde\beta_i(m,p):=\widetilde{\beta_i^{n}}(m,p)\qquad \mbox{when }\ n=n\apice{m}_p.
\end{equation} 
Notice that the definition of $n\apice{m}_p$ in \eqref{np} and \eqref{corb}  imply that $\widetilde\beta_i(m,p)<0$, for $i=1,\ldots, m-1$, for every $p\in (1,p_S)$.\\ 
In order to shorten the notation for the operator, we  set:
\begin{equation}\label{defOpRadBreve}\widetilde L\apice{m}_{p,rad}:=\widetilde{{L^{n}}}\apice{m}_{\!\!\!\!p,rad}\quad \mbox{ when }n=n\apice{m}_p.
\end{equation}

\

The main result of this section is about the asymptotic behavior of the first eigenvalue $\widetilde\beta_1(m,p)$ as $p\rightarrow p_S$:

\begin{proposition}\label{theorem:limiteBeta1}
Let $m\in\N^+$.
\begin{equation}
\label{beta1maggioreUguale}
\liminf_{p\rightarrow p_S}\ \widetilde{\beta}_1(m,p)\geq -(N-1).
\end{equation}
\end{proposition}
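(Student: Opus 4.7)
I will argue by contradiction and analyze the rescaled first eigenfunctions. Suppose that, along a subsequence $p_k\to p_S$, $\widetilde{\beta}_1(m,p_k)\to\beta_\infty<-(N-1)$, and let $\varphi_k$ be the corresponding positive first radial eigenfunction of $\widetilde{L}^m_{p_k,rad}$, normalized by $\int\varphi_k^2/|x|^2\,dx=1$. The uniform bound $|x|^2|u_{p_k}^m|^{p_k-1}\le C$ from \eqref{Q3Nge3}, combined with the Rayleigh identity associated with $\widetilde{\beta}_1(m,p_k)$, yields a uniform $H^1$-bound on $\{\varphi_k\}$.

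Next, rescale around the first nodal region via
\[
\widetilde{\varphi}_k(y) := M_{0,p_k}^{-(N-2)(p_k-1)/4}\,\varphi_k\bigl(y/M_{0,p_k}^{(p_k-1)/2}\bigr),
\]
so that $\int\widetilde{\varphi}_k^2/|y|^2\,dy=1$ on $\widetilde{A}_k := M_{0,p_k}^{(p_k-1)/2}A_{n_{p_k}^m}$. The choice of $n_p^m$ in \eqref{np} ensures that $\widetilde{A}_k$ invades $\R^N\setminus\{0\}$, while the eigenvalue equation becomes
\[
-\Delta_y\widetilde{\varphi}_k-p_k|z_{0,p_k}|^{p_k-1}\widetilde{\varphi}_k=\widetilde{\beta}_1(m,p_k)\,\widetilde{\varphi}_k/|y|^2 \qquad \text{on }\widetilde{A}_k.
\]
Since $z_{0,p_k}\to U$ in $C^2_{loc}(\R^N)$ by Theorem \ref{theorem:analisiAsintoticaCasoNgeq3} and $|y|^2|z_{0,p_k}|^{p_k-1}\le C$ uniformly, standard elliptic regularity provides a subsequence $\widetilde{\varphi}_k\to\varphi_\infty$ in $C^2_{loc}(\R^N\setminus\{0\})$, where $\varphi_\infty\ge 0$ is radial and satisfies
\[
-\Delta\varphi_\infty-p_SU^{p_S-1}\varphi_\infty=\beta_\infty\varphi_\infty/|y|^2 \qquad \text{in }\R^N\setminus\{0\}.
\]

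The contradiction then comes from the spectral structure of the limit problem. Differentiating $-\Delta U=U^{p_S}$ shows that $w_0(y):=-U'(|y|)$ is strictly positive on $\R^N\setminus\{0\}$, vanishes linearly at the origin, decays like $|y|^{-(N-1)}$ at infinity, and satisfies
\[
-\Delta w_0-p_SU^{p_S-1}w_0=-(N-1)w_0/|y|^2 \qquad \text{in }\R^N\setminus\{0\}.
\]
Hence $w_0$ lies in the natural weighted energy space and is a positive radial eigenfunction of the limit weighted problem with eigenvalue $-(N-1)$. By the standard principle that a positive eigenfunction of a self-adjoint operator realizes its lowest eigenvalue, $-(N-1)$ is the first radial eigenvalue of the limit problem; therefore a non-trivial $\varphi_\infty$ with eigenvalue $\beta_\infty<-(N-1)$ cannot exist, yielding the contradiction.

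The main obstacle is to show that $\varphi_\infty\not\equiv 0$. Since $\widetilde{A}_k$ degenerates to $\R^N\setminus\{0\}$, the normalized mass $\int\widetilde{\varphi}_k^2/|y|^2\,dy=1$ could in principle escape to the origin or to infinity, producing a trivial limit. To rule this out, the plan is to exploit the strict separation $\widetilde{\beta}_1(m,p_k)<-(N-1)/2<0$ together with the pointwise estimates of Propositions \ref{StimaFondamentalePositiva} and \ref{StimaFondamentaleNegativa} and the bound \eqref{Q3Nge3}, which localize the effective potential to a bounded range of scales. A further subtlety is that the first eigenfunction might concentrate in a nodal region other than the first one; should this happen, one runs a parallel argument with the rescaling centered at the relevant nodal region, where by Theorem \ref{theorem:analisiAsintoticaCasoNgeq3} the rescaled solution still converges to the same profile $U$, leading to an identical limit problem and the same contradiction.
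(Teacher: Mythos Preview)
Your overall framework matches the paper's \emph{CASE 1}: argue by contradiction, rescale the first eigenfunction, pass to the limit in the equation, and conclude via the fact that $-U'$ (the paper's $\eta^*$ in Theorem~\ref{lemma:betastar}) is the unique nonnegative radial eigenfunction of the limit weighted problem, with eigenvalue $-(N-1)$. That part is fine.

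The genuine gap is exactly where you flag it. Your plan for showing $\varphi_\infty\not\equiv 0$ is not sufficient. Running ``parallel arguments'' at each of the $m$ scales $M^m_{i,p}$ covers the case where the weighted mass of $\varphi_k$ localizes at one of those scales, but it does \emph{not} rule out the possibility that the mass drifts to the origin or to infinity in \emph{every} rescaling simultaneously (i.e.\ concentrates between two consecutive characteristic scales, where the potential $|x|^2|u_p|^{p-1}$ is small by Lemmas~\ref{proposition:StimaMaxASxDiEpsilon_p^-}--\ref{proposition:StimaMaxADxDiEpsilon_p^-}). The remark ``exploit $\widetilde{\beta}_1<-(N-1)/2$'' does not close this: once the mass sits where the potential is $\leq\varepsilon$, the Rayleigh identity only yields $\widetilde{\beta}_1\geq -\varepsilon$ up to the contribution from the regions you do not control, and those regions are precisely the bumps at the other scales.

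The paper deals with this case by a different device. It fixes an annulus $\{1/K_\varepsilon\leq|x|\leq K_\varepsilon\}$ and splits into: \emph{CASE~1}, some rescaling $\widehat{\phi}^{\kappa}$ with $\kappa\in\{1,\dots,m-1\}$ retains mass there (then your limit-equation argument applies and gives $-(N-1)$); and \emph{CASE~2}, all such rescalings lose their mass. In CASE~2 the paper does \emph{not} try to prove any limit is nonzero. Instead it uses the rescaling $\widehat{\phi}^{0}$ around the first nodal region as a \emph{test function} in the variational definition of $\widetilde{\beta}^*$, obtaining
\[
-(N-1)=\widetilde{\beta}^*\ \leq\ \widetilde{\beta}_1(m,p_j)\ +\ \int_{\widehat{A}^{0}}\bigl(V^m_{0,p_j}-V\bigr)\,(\widehat{\phi}^{0})^{2}\,dx,
\]
and then shows the error integral is $\leq 9\varepsilon$. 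The error is split into four pieces; the only dangerous one is the contribution from the later nodal regions, and this is bounded using Lemmas~\ref{proposition:StimaMaxASxDiEpsilon_p^-}--\ref{proposition:StimaMaxADxDiEpsilon_p^-} (potential small away from the bumps) together with the CASE~2 hypothesis (no mass at the bumps in the rescalings $i=1,\dots,m-1$). This variational inequality is what replaces the nonvanishing argument you are missing.
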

An immediate consequence of  the previous proposition is the following:
\begin{corollary}\label{cor:limiteBetai}
Let $m\in\N^+$.
\[
\liminf_{p\rightarrow p_S}\ \widetilde{\beta}_i(m,p)\geq -(N-1), \qquad\mbox{for all }\ i=1,\ldots, m-1.
\]
\end{corollary}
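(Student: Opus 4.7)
The plan is essentially to invoke Proposition \ref{theorem:limiteBeta1} together with the monotonicity of the eigenvalue sequence. By construction in Section \ref{Section:Approximation}, the radial eigenvalues $\widetilde{\beta}_i(m,p)$ of the weighted operator $\widetilde{L}\apice{m}_{p,rad}$ defined in \eqref{defOpRadBreve} are arranged in non-decreasing order and counted with multiplicity, so for every $p\in(1,p_S)$ and every $i=1,\ldots,m-1$ one has
\[
\widetilde{\beta}_i(m,p)\,\geq\,\widetilde{\beta}_1(m,p).
\]

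Taking the $\liminf$ as $p\to p_S$ on both sides and applying Proposition \ref{theorem:limiteBeta1} to the right-hand side then yields
\[
\liminf_{p\to p_S}\widetilde{\beta}_i(m,p)\,\geq\,\liminf_{p\to p_S}\widetilde{\beta}_1(m,p)\,\geq\,-(N-1),
\]
which is exactly the desired estimate.

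There is no real obstacle here: the content of the corollary is entirely contained in Proposition \ref{theorem:limiteBeta1}, and the only ingredient beyond that proposition is the fact, built into the notation $\widetilde\beta_i(m,p)$ in \eqref{np1}, that eigenvalues are indexed in non-decreasing order. Consequently the proof reduces to the two displayed lines above.
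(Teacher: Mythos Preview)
Your proof is correct and matches the paper's approach: the paper states this as ``an immediate consequence'' of Proposition \ref{theorem:limiteBeta1} without writing out the argument, and the monotonicity $\widetilde\beta_i(m,p)\geq\widetilde\beta_1(m,p)$ is precisely the implicit step.
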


\

\begin{remark} \label{remarkDopoProp4.2}
In the next section, while proving Theorem \ref{teoPrincipaleMorse}, we will show the reverse inequality: \[\widetilde{\beta_i}(m,p)<-(N-1),\qquad \forall\, i=1,\ldots, m-1, \quad \mbox{ for $p$ close to $p_S$}\] (see \eqref{stimadx}). Combining this with Corollary \ref{cor:limiteBetai} we will obtain the precise value of the limit:
\begin{equation} \label{limitePrecisodiBetap}
\widetilde{\beta}_i(m,p)\rightarrow -(N-1)\quad\mbox{  as }\ p\rightarrow p_S, \qquad\forall\, i=1,\ldots, m-1
\end{equation}
(see \eqref{convergenzadasinistraaltri}).
\end{remark}

\

The result in Proposition \ref{theorem:limiteBeta1} is the core of the proof of Theorem  \ref{teoPrincipaleMorse}. Since its proof is very long and needs various nontrivial estimates, let us first explain the strategy.

\

In order to get \eqref{beta1maggioreUguale} we consider, for any fixed $p\in (1, p_S)$,  the (radial and positive) eigenfunction  $\phi\apice{m}_p$  of $\widetilde L\apice{m}_{p,rad}$ (defined as in \eqref{defOpRadBreve}) associated with the first eigenvalue $\widetilde\beta_1(m,p)$, namely
\begin{equation}
\label{primaautofunz}
\left\{
\begin{array}{lr}
-{\phi\apice{m}_{p}}''-\frac{(N-1)}{r}{\phi\apice{m}_{p}}'-p|u\apice{m}_p|^{p-1}\phi\apice{m}_{p}=\widetilde{\beta}_1(m,p)\, \displaystyle{\frac{\phi\apice{m}_{p}}{r^2}}, \ \ \ \ r\in (\frac{1}{n\apice{m}_p},1)
\\
\phi\apice{m}_{p}(\frac{1}{n\apice{m}_p})=\phi\apice{m}_{p}(1)=0.
\end{array}
\right.
\end{equation}
To obtain the result one would like to pass to the limit as $p\rightarrow p_S$ into \eqref{primaautofunz} and deduce the value of $\lim_{p\rightarrow p_S}\widetilde{\beta}_1(m,p)$ by studying the limit eigenvalue problem. 

\

Since the term $p|u\apice{m}_p|^{p-1}$ in the equation \eqref{primaautofunz} is not bounded, it is more convenient to scale properly the eigenfunctions $\phi\apice{m}_p$ and pass to the limit into the equation satisfied by the scalings. The right possible scalings are the $\widehat{\phi\apice{m}_p}^i$, $i=0,\ldots, m-1$, defined in  \eqref{fiCappuccio} below, which satisfy the equations in \eqref{eq cappuccio p} where the eigenvalue $\widetilde{\beta}_1(m,p)$ again appears. Note that the scaling parameter in the definition of $\widehat{\phi\apice{m}_p}^i$ is given by the value $M\apice{m}_{i,p}$ of the  $L^{\infty}$-norm of $u\apice{m}_p$ in the corresponding  $i$-th nodal region. 

\

Of course this procedure is efficient if at least one among the $\widehat{\phi\apice{m}_p}^i$  does not vanish in the limit. Since we cannot guarantee that this is always the case (see \emph{CASE 2.} in the proof of Proposition \ref{theorem:limiteBeta1}) we combine it with  a different  
strategy which consists in considering a suitable \emph{limit eigenvalue problem}  (with the operator $\widetilde L^*$ in Section \ref{subs:limitEigenvalueProblem}) and exploiting the variational characterization  of its first eigenvalue. This reduces the proof to analyzing the \emph{difference} between  
a \emph{limit potential} $V$  and the actual potential $V\apice{m}_{0,p}$ defined in  \eqref{limiteV} and \eqref{defVpprima} below, exploiting the asymptotic behavior of $u\apice{m}_p$ studied in Section \ref{Section:Asymptotic2}.
In particular we need to   
 evaluate the contribution to the limit of $\widetilde\beta_1(m,p)$ given by the first nodal region $B\apice{m}_{0,p}$ of $u\apice{m}_p$, which is contained in  Lemma \ref{proposition:partePositiva} below, and  the contribution given by the other nodal regions of  $u\apice{m}_p$  and this is done in  Lemma \ref{proposition:StimaMaxASxDiEpsilon_p^-} and Lemma \ref{proposition:StimaMaxADxDiEpsilon_p^-} where the behavior of the function $f\apice{m}_p(r):=|r|^2|u\apice{m}_p(r)|^{p-1}$  in the  nodal regions $B\apice{m}_{i,p}$, $i=1,\ldots, m-1$, of $u\apice{m}_p$ is studied.

\

To make easier the understanding of the proof of Proposition \ref{theorem:limiteBeta1} we have divided this section as follows:
\begin{itemize}
\item in Section \ref{subs:limitEigenvalueProblem}  we introduce the limit weighted eigenvalue problem;
\item in Section \ref{subsecPrelim} we collect all the preliminary results about $\phi\apice{m}_p$ as well as the properties of its scalings $\widehat{\phi\apice{m}_p}^i$, $i=0,\ldots, m-1$;
\item in  Section \ref{subsectPositive} we estimate $u\apice{m}_p$ in $B\apice{m}_{0,p}$; 
\item in Section  \ref{subsectNegative} we estimate $u\apice{m}_p$ in $B\setminus B\apice{m}_{0,p}$;
\item in Section \ref{subsectionproofpro} we  complete the proof of Proposition \ref{theorem:limiteBeta1}.
\end{itemize}

 \

\subsection{A limit weighted eigenvalue problem}\label{subs:limitEigenvalueProblem} 

\

\

Let $N\geq 3$ and consider the weighted linear operator
\[\widetilde L^* v:=|x|^{2}\left[-\Delta v-V(x)v\right],\qquad x\in\mathbb R^N\]
where 
\begin{equation}
\label{limiteV}
V(x):= p_S\, U(x)^{p_S-1}=\frac{N+2}{N-2}\left( \frac{N(N-2)}{N(N-2)+|x|^2} \right)^{2}
\end{equation}
with $U$  as in \eqref{UNgeq3}, i.e. $U$ is the unique positive bounded solution to the critical equation
\eqref{criticalLimitEquation} in $\R^N$.

\

We want to define the first eigenvalue of $\widetilde L^*$.  
Let $D^{1,2}(\R^N)$ be the Hilbert space defined as  the  closure of $C^{\infty}_c(\R^N)$ with respect to the Dirichlet norm $\|v\|_{D^{1,2}(\R^N)}:=\left(\int_{\R^N}|\nabla v(x)|^2dx\right)^{\frac{1}{2}}$ and let us denote by $D^{1,2}_{rad}(\R^N)$ its subspace made of  radial functions.

\

Let us set
\begin{eqnarray}\label{defbetastar}
\widetilde\beta^*:=
\inf_{
\substack{
v\in D^{1,2}_{rad}(\R^N)\\ v\neq 0}}  \frac{\int_{\mathbb R^N}\left(|\nabla v(x)|^2-V(x)v(x)^2\right) dx}{\|\frac{v}{|x|}\|_{L^2(\R^N)}^2}.
\end{eqnarray}

Observe that this definition is well posed since  the Hardy inequality holds: 
\begin{equation}\label{Hardyinequa}
\|\frac{v}{|x|}\|_{L^2(\R^N)}\leq \frac{2}{(N-2)}\|v\|_{D^{1,2}(\R^N)},\qquad \mbox{for any $v\in D^{1,2}(\R^N), \quad N\geq 3$}
\end{equation} and so 
\[
\int_{\R^N}V(x)v(x)^2 dx\leq \sup_{\mathbb R^N}(V(x)|x|^2)\int_{\mathbb R^N}\frac{v(x)^2}{|x|^2}dx\overset{\eqref{Hardyinequa}}{\leq} C\|v\|^2_{D^{1,2}(\R^N)},
 \]
where we have used that $\sup_{\R^N}(V(x)|x|^2)<+\infty $.
\\
It is useful for the sequel  to introduce also the weighted  Hilbert space
\begin{equation}\label{def:L2Pesato}
L^2_{\frac{1}{|x|}}(\R^N):=\left\{v:\R^N\rightarrow \R \ : \ \frac{v}{|x|}\in L^2(\R^N)    \right\},
\end{equation}
endowed with the scalar product $(u,v):=\int_{\R^N}\frac{u(x)v(x)}{|x|^2}dx$. Note that
 $D^{1,2}_{rad}(\R^N)\hookrightarrow L^2_{\frac{1}{|x|}}(\R^N)$ continuously by Hardy inequality.\\
In \cite{DeMarchisIanniPacellaMathAnn} the precise value of $\widetilde\beta^*$ has been computed in any dimension  and this will be a crucial step towards the proof of Theorem \ref{teoPrincipaleMorse}. We summarize the results for $\widetilde\beta^*$ obtained in \cite{DeMarchisIanniPacellaMathAnn} in the next theorem.
\begin{theorem}\label{lemma:betastar} For any $N\geq 3$
\[\widetilde\beta^*=-(N-1) \]
and it is achieved at the function
\[
\eta^*(x)= \frac{|x|}{(1+\frac{|x|^{2}}{N(N-2)})^{\frac{N}{2}}}, 
\]
which solves the eigenvalue problem 
\begin{equation}\label{equazioneLimite}
 -\Delta\eta(x)-V(x)\eta(x)=\lambda\frac{\eta(x)}{|x|^2}\qquad x\in \R^N\setminus\{0\}\\
 \end{equation}
 with eigenvalue 
 \[\lambda =  \widetilde\beta^*.\]
Moreover if there exists  $\eta\in C^2( \R^N\setminus\{0\})\cap D^{1,2}_{rad}(\R^N)$, $\eta\geq0$, $\eta\neq 0$ radial solution to \eqref{equazioneLimite} with $\lambda\leq 0$, then \begin{equation}\label{unicheSoluzioniDiEquazioneLimite}\lambda=-(N-1),
 \end{equation}
namely $\widetilde\beta^*$ is the unique nonpositive radial eigenvalue for problem \eqref{equazioneLimite}.
\end{theorem}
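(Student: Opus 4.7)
The plan is to produce the eigenfunction $\eta^*$ explicitly by exploiting the translation invariance of the Aubin--Talenti bubble $U$, then use a Picone-type substitution to prove simultaneously that $\widetilde\beta^{*}=-(N-1)$ and that this value is attained, and finally handle the uniqueness statement via a standard positivity/integration-by-parts trick.

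First, I would differentiate the critical equation $-\Delta U = U^{p_S}$ with respect to $x_{1}$ to obtain $-\Delta(\partial_{x_{1}}U) = V(x)\,\partial_{x_{1}}U$ on $\R^{N}$. Writing $\partial_{x_{1}}U(x) = (x_{1}/|x|)\,U'(|x|)$ and using that $Y_{1}(\theta):= x_{1}/|x|$ is a spherical harmonic of degree one with $-\Delta_{S^{N-1}}Y_{1} = (N-1)Y_{1}$, the decomposition $\Delta(Y_{1}\psi) = Y_{1}\bigl(\psi'' + \tfrac{N-1}{r}\psi' - \tfrac{N-1}{r^{2}}\psi\bigr)$ shows that the radial factor $\psi(r):= -U'(r)$ satisfies $-\psi'' - \tfrac{N-1}{r}\psi' - V\psi = -(N-1)\psi/r^{2}$. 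A direct computation with the explicit formula for $U$ yields $-U'(r) = \tfrac{1}{N}\,\eta^{*}(r)$, so $\eta^{*}$ solves \eqref{equazioneLimite} with $\lambda = -(N-1)$. The asymptotics $\eta^{*}(r)\sim r$ as $r\to 0$ and $\eta^{*}(r)\sim r^{1-N}$ as $r\to\infty$ then imply $\eta^{*}\in D^{1,2}_{rad}(\R^{N})\cap L^{2}_{1/|x|}(\R^{N})$.

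Testing $\eta^{*}$ against itself in its own equation gives $\int_{\R^{N}}(|\nabla\eta^{*}|^{2}-V(\eta^{*})^{2})\,dx = -(N-1)\int_{\R^{N}}(\eta^{*})^{2}/|x|^{2}\,dx$, hence $\widetilde\beta^{*}\leq -(N-1)$. For the matching lower bound I would write any test function as $v = w\,\eta^{*}$ with $w\in D^{1,2}_{rad}$ radial, expand
\[
|\nabla v|^{2} = \nabla\eta^{*}\cdot\nabla(w^{2}\eta^{*}) + (\eta^{*})^{2}|\nabla w|^{2},
\]
and integrate by parts, using $-\Delta\eta^{*} = V\eta^{*} - (N-1)\eta^{*}/|x|^{2}$, to obtain the Picone-type identity
\[
\int_{\R^{N}}\bigl(|\nabla v|^{2}-Vv^{2}\bigr)\,dx = -(N-1)\int_{\R^{N}}\frac{v^{2}}{|x|^{2}}\,dx + \int_{\R^{N}}(\eta^{*})^{2}|\nabla w|^{2}\,dx.
\]
Dividing by $\|v/|x|\|_{L^{2}}^{2}$ gives $\widetilde\beta^{*}\geq -(N-1)$, with equality iff $w$ is constant; in particular $\eta^{*}$ realizes the infimum.

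For the uniqueness claim, given any radial $\eta\in C^{2}(\R^{N}\setminus\{0\})\cap D^{1,2}_{rad}(\R^{N})$ with $\eta\geq 0$, $\eta\not\equiv 0$ solving \eqref{equazioneLimite} for some $\lambda\leq 0$, I would multiply the equation for $\eta^{*}$ by $\eta$ and the equation for $\eta$ by $\eta^{*}$, subtract, and integrate by parts on $\R^{N}$ to get
\[
\bigl(\lambda + (N-1)\bigr)\int_{\R^{N}}\frac{\eta\,\eta^{*}}{|x|^{2}}\,dx \;=\; 0.
\]
Since $\eta^{*}>0$ on $\R^{N}\setminus\{0\}$ and $\eta\geq 0$ is nontrivial, the integral is strictly positive, forcing $\lambda = -(N-1)$. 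The main technical delicacy throughout is justifying the integrations by parts on the unbounded domain with the singular weight $|x|^{-2}$; this is handled by combining the Hardy inequality \eqref{Hardyinequa}, the linear vanishing $\eta^{*}(x)\sim |x|$ at the origin (which absorbs the singularity of $V$ and of the weight), and the $|x|^{1-N}$ decay of $\eta^{*}$ at infinity together with the $D^{1,2}$ control on $v$ and $\eta$.
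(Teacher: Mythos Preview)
Your argument is correct and self-contained, whereas the paper does not give a proof here at all: it simply refers the reader to Section~5 of \cite{DeMarchisIanniPacellaMathAnn}. The approach you outline---recovering $\eta^{*}$ as a scalar multiple of $-U'$ via differentiation of the critical equation and separation of the first spherical harmonic, then a Picone/ground-state substitution $v=w\eta^{*}$ for the sharp lower bound, and a Wronskian-type cross-multiplication for the uniqueness---is in fact the natural route and is essentially what is done in that reference.

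Two minor technical remarks on points you already flagged. For the Picone step, the cleanest way to justify the computation is to first take $v\in C^{\infty}_{c}(\R^{N}\setminus\{0\})$ radial, where $w=v/\eta^{*}$ is smooth and compactly supported away from the origin, so no boundary terms arise; then pass to general $v\in D^{1,2}_{rad}(\R^{N})$ by density (for $N\ge 3$ a point has zero $H^{1}$-capacity, so $C^{\infty}_{c}(\R^{N}\setminus\{0\})$ is dense). For the uniqueness, to control the boundary terms in the cross-integration on annuli $\{\epsilon<|x|<R\}$ it helps to record that any radial $\eta\in D^{1,2}_{rad}(\R^{N})$ satisfies $|\eta(r)|\,r^{(N-2)/2}\to 0$ as $r\to\infty$ and $r\to 0$ (a one-line Cauchy--Schwarz estimate on $\eta(r)=-\int_{r}^{\infty}\eta'(s)\,ds$), which together with $\eta^{*}(r)\sim r$ near $0$ and $\eta^{*}(r)\sim r^{1-N}$ near $\infty$ kills the Wronskian boundary contributions.
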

\begin{proof}
See Section $5$ of \cite{DeMarchisIanniPacellaMathAnn}.
\end{proof}

\

\subsection{Properties of the eigenfuntion and its scalings}\label{subsecPrelim}

\

\

For any $m\in\N^+$ and $p\in (1,p_S)$ let us set
\begin{equation}
\label{definition:Ap}
A\apice{m}_p:=A_{n\apice{m}_p}=\big\{y\in \mathbb R^N\ :\ \frac{1}{n\apice{m}_p}<|y|<1 \big\}
\end{equation}
with $n\apice{m}_p$ defined in \eqref{np} and let $\phi\apice{m}_p$ be the (radial and positive) solution to \eqref{primaautofunz} normalized in such a way that
\begin{equation}\label{normalizzazione}
\left\|\frac{\phi\apice{m}_{p}}{|y|}\right\|_{L^2(A\apice{m}_p)}=1.
\end{equation}

\

\begin{lemma}\label{lemma:boundGradiente} For any $m\in\N^+,$ there exist $\delta=\delta(m)>0$ and $C>0$ (independent of $m$) such that
\[
\sup\{\|\nabla\phi\apice{\emph m}_p\|^2_{L^2(A\apice{m}_p)}\,:\,p\in(p_S-\delta,p_S)\}\leq C.
\]
\end{lemma}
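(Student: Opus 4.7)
The plan is to test the eigenvalue equation \eqref{primaautofunz} against $\phi\apice{m}_p$ itself and combine the resulting identity with the uniform potential bound \eqref{Q3Nge3} and the normalization \eqref{normalizzazione}. Concretely, multiplying the radial PDE satisfied by $\phi\apice{m}_p$ by $\phi\apice{m}_p\,r^{N-1}$ and integrating over $(\tfrac{1}{n\apice{m}_p},1)$ (equivalently, testing the weighted eigenvalue problem $-\Delta h - p|u\apice{m}_p|^{p-1}h = \widetilde\beta_1(m,p)\,h/|x|^2$ against $h=\phi\apice{m}_p$ on $A\apice{m}_p$) produces the identity
\begin{equation}
\|\nabla\phi\apice{m}_p\|_{L^2(A\apice{m}_p)}^2 \;=\; \widetilde\beta_1(m,p)\,\Big\|\tfrac{\phi\apice{m}_p}{|y|}\Big\|_{L^2(A\apice{m}_p)}^2 \;+\; p\int_{A\apice{m}_p} |u\apice{m}_p|^{p-1}\,(\phi\apice{m}_p)^2\,dy.
\end{equation}

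Next, I would rewrite the second summand by artificially introducing the weight $|y|^2$, so that
\begin{equation}
p\int_{A\apice{m}_p}|u\apice{m}_p|^{p-1}(\phi\apice{m}_p)^2 dy \;=\; p\int_{A\apice{m}_p}\bigl(|y|^2|u\apice{m}_p|^{p-1}\bigr)\,\frac{(\phi\apice{m}_p)^2}{|y|^2}\,dy,
\end{equation}
and I would then apply the uniform potential estimate $f\apice{m}_p(|y|)=|y|^2|u\apice{m}_p|^{p-1}\leq C$ from \eqref{Q3Nge3}, valid for $p>p_S-\delta$ with a constant $C$ independent of both $p$ and $m$. Combined with the normalization \eqref{normalizzazione}, this gives
\begin{equation}
p\int_{A\apice{m}_p}|u\apice{m}_p|^{p-1}(\phi\apice{m}_p)^2 dy \;\leq\; Cp \;\leq\; C\,p_S.
\end{equation}

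Finally, I would invoke the fact that $\widetilde\beta_1(m,p)<0$ for every $p\in(1,p_S)$ — this is built into the choice of $n\apice{m}_p$ via \eqref{corb} and \eqref{np}, which ensures at least $m\geq 1$ negative radial eigenvalues of $\widetilde L\apice{m}_{p,rad}$ — to drop the first (negative) term on the right-hand side of the identity. This yields $\|\nabla\phi\apice{m}_p\|_{L^2(A\apice{m}_p)}^2 \leq C p_S$ for all $p\in(p_S-\delta,p_S)$, which is the claim. No step should present serious difficulty: the only subtle point is checking that \eqref{Q3Nge3} applies with a constant independent of $m$ (as stated in that proposition) and that $\widetilde\beta_1(m,p)$ is indeed negative, both of which are already established in the earlier sections.
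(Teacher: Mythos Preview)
Your proof is correct and follows essentially the same approach as the paper: test the eigenvalue equation against $\phi\apice{m}_p$, use the pointwise bound \eqref{Q3Nge3} together with the normalization \eqref{normalizzazione} to control the potential term, and drop the negative term $\widetilde\beta_1(m,p)$. The only cosmetic difference is that the paper absorbs the factor $p$ into the constant $C$ from the outset, while you carry it along and bound it by $p_S$ at the end.
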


\begin{proof}
From \eqref{primaautofunz} and recalling that, by  \eqref{Q3Nge3}, there exists $\delta=\delta(m)>0$ such that $p|u\apice{m}_p(y)|^{p-1}|y|^2\leq C$, for any $y\in B$ and  $p>p_S-\delta$, we have:
\begin{eqnarray}
\int_{A\apice{m}_p}|\nabla\phi\apice{m}_p(y)|^2dy 
&= &\int_{A\apice{m}_p}p|u\apice{m}_p(y)|^{p-1}|y|^2\frac{\phi\apice{m}_p(y)^2}{|y|^2} dy+\widetilde\beta_1(m,p)\int_{A\apice{m}_p}\frac{\phi\apice{m}_p(y)^2}{|y|^2}dy
\nonumber\\
&\leq & C\int_{A\apice{m}_p}\frac{\phi\apice{m}_p(y)^2}{|y|^2}dy +\widetilde\beta_1(m,p)\int_{A\apice{m}_p}\frac{\phi\apice{m}_p(y)^2}{|y|^2}dy
\nonumber\\
&\overset{\eqref{normalizzazione}}{=}& C +\widetilde\beta_1(m,p)\label{comeQui}\\
&\leq & C, \nonumber
\end{eqnarray}
since  $\widetilde\beta_1(m,p)<0$.
\end{proof}

\

Next result gives a first, still inaccurate, bound from below of  $\widetilde{\beta}_1(m,p)$ that will be useful in the sequel.

\begin{lemma}\label{lemma:beta1pbounded}
For any $m\in\N^+,$ there exist $\delta=\delta(m)>0$ and $C>0$ (independent of $m$) such that
\begin{equation}\label{betaUnoLimitato}
-C\leq \widetilde{\beta}_1(m,p)\ (< 0),\qquad\mbox{for any \ $p\in (p_S-\delta,p_S)$}.
\end{equation}
\end{lemma}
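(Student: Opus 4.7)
The plan is to read off the estimate directly from the computation already carried out in Lemma \ref{lemma:boundGradiente}. The strategy is to test the eigenvalue equation \eqref{primaautofunz} against the eigenfunction $\phi\apice{m}_p$ itself: multiplying \eqref{primaautofunz} by $\phi\apice{m}_p$ and integrating by parts on $A\apice{m}_p$ (the boundary terms vanish because of the Dirichlet condition $\phi\apice{m}_p(\tfrac{1}{n\apice{m}_p})=\phi\apice{m}_p(1)=0$) yields the identity
\[
\int_{A\apice{m}_p}|\nabla\phi\apice{m}_p(y)|^2\,dy \;=\; \int_{A\apice{m}_p} p|u\apice{m}_p(y)|^{p-1}(\phi\apice{m}_p(y))^2\,dy \;+\; \widetilde{\beta}_1(m,p)\int_{A\apice{m}_p}\frac{(\phi\apice{m}_p(y))^2}{|y|^2}\,dy.
\]
By the normalization \eqref{normalizzazione}, the last integral equals $1$, so the right-hand side is exactly $\bigl(\text{potential term}\bigr)+\widetilde{\beta}_1(m,p)$.

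To control the potential term I would rewrite
\[
p|u\apice{m}_p(y)|^{p-1}(\phi\apice{m}_p(y))^2 \;=\; \bigl(p|u\apice{m}_p(y)|^{p-1}|y|^2\bigr)\cdot \frac{(\phi\apice{m}_p(y))^2}{|y|^2}
\]
and invoke the key pointwise bound \eqref{Q3Nge3}, which guarantees the existence of $\delta=\delta(m)>0$ and of a constant $C>0$ \emph{independent of} $m$ such that $|y|^2|u\apice{m}_p(y)|^{p-1}\leq C$ for all $y\in B$ and $p>p_S-\delta$. Since $p$ stays bounded as $p\to p_S$, we can absorb the factor $p$ into $C$ and, using \eqref{normalizzazione} once more, obtain
\[
\int_{A\apice{m}_p} p|u\apice{m}_p(y)|^{p-1}(\phi\apice{m}_p(y))^2\,dy \;\leq\; C \int_{A\apice{m}_p}\frac{(\phi\apice{m}_p(y))^2}{|y|^2}\,dy \;=\; C.
\]

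Substituting into the identity above gives
\[
\widetilde{\beta}_1(m,p) \;=\; \int_{A\apice{m}_p}|\nabla\phi\apice{m}_p(y)|^2\,dy \;-\; \int_{A\apice{m}_p} p|u\apice{m}_p(y)|^{p-1}(\phi\apice{m}_p(y))^2\,dy \;\geq\; 0 - C \;=\; -C,
\]
since the Dirichlet integral is non-negative. This is the desired lower bound; the upper bound $\widetilde{\beta}_1(m,p)<0$ is already ensured by the choice of $n\apice{m}_p$ in \eqref{np} together with \eqref{corb}. There is essentially no obstacle: the whole argument is a one-line consequence of the chain of inequalities displayed in \eqref{comeQui} (the inequality there reads $\int|\nabla\phi\apice{m}_p|^2 \leq C+\widetilde{\beta}_1(m,p)$, and non-negativity of the left-hand side gives the claim). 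The only point worth stressing in the write-up is that the constant $C$ produced by \eqref{Q3Nge3} does not depend on $m$, so that it can legitimately appear in the statement of the lemma.
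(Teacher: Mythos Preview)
Your proposal is correct and follows exactly the paper's approach: the paper simply says the proof follows directly from \eqref{comeQui}, and you have spelled out precisely that chain of inequalities, deducing $\widetilde{\beta}_1(m,p)\geq -C$ from the non-negativity of $\int_{A\apice{m}_p}|\nabla\phi\apice{m}_p|^2$.
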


\begin{proof}
The proof follows directly from  \eqref{comeQui}.
\end{proof}

\

Let 
\begin{equation}\label{defApCapp}
\widehat{A\apice{m}_p}^i:=(M\apice{m}_{i,p})^{\frac{p-1}{2}}A\apice{m}_p= \Big\{y\in \mathbb R^N\ :\ \frac{(M\apice{m}_{i,p})^{\frac{p-1}{2}}}{n\apice{m}_p}<|y|<(M\apice{m}_{i,p})^{\frac{p-1}{2}} \Big\}, 
\end{equation}
for $i=0,\ldots, m-1$, where $A\apice{m}_p$ is as in \eqref{definition:Ap} and  consider the $m$  scalings of $\phi\apice{m}_p$, defined by
\begin{equation}\label{fiCappuccio}
\widehat{\phi\apice{m}_p}^i(x):=\frac{1}{(M\apice{m}_{i,p})^{\frac{(p-1)(N-2)}{4}}}\phi_{p}\apice{m}\Big(\frac{|x|}{(M\apice{m}_{i,p})^{\frac{p-1}{2}}}\Big),\quad \mbox{ for }x\in \widehat{A\apice{m}_p}^i, \quad i=0,\ldots, m-1,
\end{equation}
which, by \eqref{primaautofunz}, satisfy the equations  
\begin{equation}\label{eq cappuccio p}
\left\{
\begin{array}{lr}
-\Delta\widehat{\phi\apice{m}_p}^i-V\apice{m}_{i,p}(x)\widehat{\phi\apice{m}_p}^i=\widetilde{\beta}_1(m,p) \displaystyle{\frac{\widehat{\phi\apice{m}_p}^i}{|x|^2}}, \ \ \ \ x\in \widehat{A\apice{m}_p}^i
\\
\widehat{\phi\apice{m}_p}^i=0\ \ \ \mbox{ on }\ \partial \widehat{A\apice{m}_p}^i
\end{array}
\right.
\end{equation}
where
\begin{equation}\label{defVpprima}
V\apice{m}_{i,p}(x)
:=   p\frac{1}{(M\apice{m}_{i,p})^{p-1}}\ \Big|u\apice{m}_p\Big(\frac{|x|}{(M\apice{m}_{i,p})^{\frac{p-1}{2}}}\Big)\Big|^{p-1}.
\end{equation}
Note that by \eqref{epsilonpmN3}, \eqref{np} and  \eqref{limiterappepsilon} we have that
\begin{equation}
\widehat{A\apice{m}_p}^i\to\R^N\setminus\{0\} \qquad\mbox{ as $p\to p_S$, $\ \forall\ i=0,\ldots, m-1$.}
\end{equation}
Moreover observe that when $x\in \widetilde{T}\apice{m}_{i,p}\cap \widehat{A\apice{m}_p}^i$ 
\begin{equation}
\label{defVp}
V\apice{m}_{i,p}(x)=
p|z\apice{m}_{i,p}(x)|^{p-1},\ i=0,\ldots, m-1
\end{equation}
where $\widetilde{T}\apice{m}_{i,p}$ and $z\apice{m}_{i,p}$ are the rescaled sets and functions defined in \eqref{zeta}, hence by Theorem \ref{theorem:analisiAsintoticaCasoNgeq3}, we have that, as $p\to p_S$:
\begin{equation}\label{npepsilonp}
\widetilde{T}\apice{m}_{i,p}\cap \widehat{A\apice{m}_p}^i= \left\{\begin{array}{ll}
\widehat{A\apice{m}_p}^0 & \mbox{if }i=0\\
\widetilde{T}\apice{m}_{i,p} & \mbox{if }i=1,\ldots, m-1
\end{array} \right\}
\longrightarrow\R^N\setminus\{0\}, \qquad\mbox{ $\ \forall\ i=0,\ldots, m-1$}
\end{equation}
and also that
\begin{eqnarray}\label{V_ptoVN}
&& V\apice{m}_{0,p} \longrightarrow V
 \ \mbox{ in }\ C^0_{loc}(\mathbb R^N)\
\\
\label{W_ptoWN}
&&
V\apice{m}_{i,p}\chi_{\widetilde{T}\apice{m}_{i,p}} \longrightarrow V
 \ \mbox{ in }\ C^0_{loc}(\R^N\setminus\{0\}), \qquad \forall \ i=1,\ldots, m-1,
\end{eqnarray}
where $V$ is defined in \eqref{limiteV}.

\

Still denoting by $\widehat{\phi\apice{m}_p}^i$ the extension to $0$ of $\widehat{\phi\apice{m}_p}^i$ outside of $\widehat{A\apice{m}_p}^i$, we also have that $\widehat{\phi\apice{m}_p}^i$ is bounded in $D^{1,2}_{rad}(\R^N)$, indeed:

\begin{lemma}\label{lemma:boundrescalatefi}
For any $m\in\N^+,$ there exist $\delta=\delta(m)>0$ and $C>0$ (independent of $m$) such that
\begin{equation} \label{boundGradienteRiscalate}
\sup \{ \| \nabla \widehat{\phi\apice{\emph m}_p}^i\|_{L^2(\R^N)}\,:\,p\in(p_S-\delta,p_S)\}\leq C.
\end{equation}
Moreover
\begin{equation}\label{normaPesata1}
\left\| \frac{\widehat{\phi\apice{\emph m}_p}^i}{|x|}   \right\|_{L^2(\R^N)}=1.
\end{equation}
\end{lemma}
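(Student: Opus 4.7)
The plan is to observe that the scaling in \eqref{fiCappuccio} is precisely the conformal rescaling that preserves both the Dirichlet integral and the Hardy weighted norm $\|\cdot/|x|\|_{L^2}$, so that both assertions reduce to the corresponding estimates on $\phi\apice{m}_p$ already established in Lemma \ref{lemma:boundGradiente} and the normalization \eqref{normalizzazione}. There is no serious obstacle here; the only point to verify is bookkeeping of the exponents.

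Concretely, set $\lambda_{i,p}:=(M\apice{m}_{i,p})^{\frac{p-1}{2}}$, so that
\[
\widehat{\phi\apice{m}_p}^i(x)=\lambda_{i,p}^{-\frac{N-2}{2}}\,\phi\apice{m}_p\!\left(\tfrac{x}{\lambda_{i,p}}\right),\qquad x\in\widehat{A\apice{m}_p}^i=\lambda_{i,p}A\apice{m}_p,
\]
and $\widehat{\phi\apice{m}_p}^i\equiv 0$ outside $\widehat{A\apice{m}_p}^i$. Differentiating gives $\nabla\widehat{\phi\apice{m}_p}^i(x)=\lambda_{i,p}^{-\frac{N}{2}}\,(\nabla\phi\apice{m}_p)(x/\lambda_{i,p})$. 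With the change of variable $y=x/\lambda_{i,p}$ (so $dx=\lambda_{i,p}^N\,dy$), we compute
\[
\int_{\R^N}|\nabla\widehat{\phi\apice{m}_p}^i(x)|^2\,dx
=\lambda_{i,p}^{-N}\int_{A\apice{m}_p}|\nabla\phi\apice{m}_p(y)|^2\,\lambda_{i,p}^{N}\,dy
=\int_{A\apice{m}_p}|\nabla\phi\apice{m}_p(y)|^2\,dy,
\]
which is bounded uniformly for $p$ close to $p_S$ by Lemma \ref{lemma:boundGradiente}, proving \eqref{boundGradienteRiscalate}.

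For the weighted norm, the same change of variable yields
\[
\int_{\R^N}\frac{|\widehat{\phi\apice{m}_p}^i(x)|^2}{|x|^2}\,dx
=\lambda_{i,p}^{-(N-2)}\int_{\widehat{A\apice{m}_p}^i}\frac{|\phi\apice{m}_p(x/\lambda_{i,p})|^2}{|x|^2}\,dx
=\int_{A\apice{m}_p}\frac{|\phi\apice{m}_p(y)|^2}{|y|^2}\,dy,
\]
where in the last step we used $|x|=\lambda_{i,p}|y|$ together with $dx=\lambda_{i,p}^N dy$, so the factors $\lambda_{i,p}^{-(N-2)-2+N}=1$ cancel exactly. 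By the normalization \eqref{normalizzazione} the right hand side equals $1$, giving \eqref{normaPesata1}.
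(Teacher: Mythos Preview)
Your proof is correct and follows essentially the same approach as the paper: both exploit that the rescaling \eqref{fiCappuccio} is exactly the conformal scaling preserving the Dirichlet integral and the Hardy-weighted $L^2$ norm, reducing the claims to Lemma~\ref{lemma:boundGradiente} and the normalization~\eqref{normalizzazione}. Your explicit bookkeeping of the exponents via $\lambda_{i,p}=(M\apice{m}_{i,p})^{(p-1)/2}$ is slightly more detailed than the paper's presentation, but the argument is the same.
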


\

\begin{proof}
The proof of \eqref{boundGradienteRiscalate} and \eqref{normaPesata1} follows directly from  the definitions of
$\widehat{\phi\apice{m}_p}^i$.  Indeed we have
\[\int_{\R^N} \frac{\widehat{\phi\apice{m}_p}^i(x)^2}{|x|^2}dx 
=\int_{A\apice{m}_p} \frac{\phi\apice{m}_{p}(y)^2}{|y|^2} dy\overset{\eqref{normalizzazione}}{=}1\]
and, observing that
$
\nabla \widehat{\phi\apice{m}_p}^i(x) =(M\apice{m}_{i,p})^{-\frac{N(p-1)}{4}} \nabla\phi\apice{m}_p\Big(\frac{|x|}{(M\apice{m}_{i,p})^{\frac{p-1}{2}}}\Big)
$, we also get
\begin{equation}\label{bbbb}
\int_{\R^N} |\nabla \widehat{\phi\apice{m}_p}^i(y)|^2 dy 
=\int_{A\apice{m}_p} |\nabla\phi\apice{m}_p(x)|^2 dx\leq C
\end{equation}
by Lemma \ref{lemma:boundGradiente}.
\end{proof}

\

\subsection{An estimate in the first nodal region }
\label{subsectPositive}

\

\

In this section, investigating accurately the contribution given by the restriction of $u\apice{m}_p$ to  the \emph{first} nodal region $B\apice{m}_{0,p}$ intersected with the annulus $A\apice{m}_p$ introduced in \eqref{definition:Ap}, we derive an estimate that will be used later  in the proof of Proposition \ref{theorem:limiteBeta1}.\\
More precisely we consider the set
\begin{equation}
\label{definition:App} F\apice{m}_p:=A\apice{m}_p\cap B\apice{m}_{0,p}=\big\{y\in \mathbb R^N\ :\ \frac{1}{n\apice{m}_p}<|y|< r\apice{m}_{1,p} \big\}\overset{\eqref{np}}{\not = }\emptyset,
\end{equation}
 where  $n\apice{m}_p$ is defined in \eqref{np} and $r\apice{m}_{1,p}$ is the first nodal radius of $u\apice{m}_p$ (see \eqref{rp}) and prove the following:

\begin{lemma}\label{proposition:partePositiva}
Let $m\in\N^+$. For any $\varepsilon>0$ there exists $R_{\varepsilon}>0$ (independent of $m$) such that
\[\lim_{p\rightarrow p_S}\int_{\widehat{F\apice{\emph m}_p}^0\cap\{|x|> R\}} V\apice{\emph m}_{0,p}(x) \, \widehat{\phi\apice{\emph m}_p}^0(x)^2\, dx\ \leq\  \varepsilon,\qquad \mbox{ for all } R\geq R_{\varepsilon},\]
where   
\begin{equation}\label{def:Pprisclata}
\widehat{F\apice{\emph m}_p}^0:=(M\apice{\emph m}_{0,p})^{\frac{p-1}{2}}F\apice{\emph m}_p,
\end{equation}
$\widehat{\phi\apice{\emph m}_p}^0$ is as in \eqref{fiCappuccio} and $V\apice{\emph m}_{0,p}$ satisfies  \eqref{defVp}.
\end{lemma}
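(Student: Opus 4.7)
The plan is to split the integrand as
\[
V\apice{m}_{0,p}(x)\,\widehat{\phi\apice{m}_p}^0(x)^2 \;=\; \bigl(|x|^2 V\apice{m}_{0,p}(x)\bigr)\cdot\frac{\widehat{\phi\apice{m}_p}^0(x)^2}{|x|^2},
\]
so that one factor can be absorbed by the $L^2$-normalization \eqref{normaPesata1}, which provides a uniform bound on $\widehat{\phi\apice{m}_p}^0/|x|$ in $L^2(\R^N)$. Once this split is made, the entire task reduces to showing that $|x|^2 V\apice{m}_{0,p}(x)$ is small outside a large ball, uniformly for $p$ sufficiently close to $p_S$ (and crucially independently of $m$), which is a purely pointwise claim about the potential.

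To estimate $V\apice{m}_{0,p}$, I would plug in the substitution $y=|x|/(M\apice{m}_{0,p})^{(p-1)/2}$ into the sharp pointwise bound of Proposition \ref{StimaFondamentalePositiva}; since on $F\apice{m}_p\subset B\apice{m}_{0,p}$ the estimate \eqref{bound parte positivaFONDAMENTALE} applies, after rescaling it yields
\[
V\apice{m}_{0,p}(x)\;\leq\;\frac{p}{\Bigl(1+\tfrac{|x|^2}{N(N-2)}\Bigr)^{\frac{(N-2)(p-1)}{2}}} \quad\text{on } \widehat{F\apice{m}_p}^0.
\]
Because $(N-2)(p-1)/2 \to 2$ as $p\to p_S$, there exists $\delta>0$ such that $(N-2)(p-1)/2 \geq 3/2$ for all $p\in(p_S-\delta,p_S)$; multiplying the previous display by $|x|^2$ then gives the $p$- and $m$-independent bound
\[
|x|^2 V\apice{m}_{0,p}(x)\;\leq\;\frac{p_S\,|x|^2}{\Bigl(1+\tfrac{|x|^2}{N(N-2)}\Bigr)^{3/2}}=:G(|x|).
\]

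The function $G$ depends only on $N$ and satisfies $G(r)\to 0$ as $r\to\infty$, so for any $\varepsilon>0$ we can pick $R_\varepsilon>0$ (depending only on $N$ and $\varepsilon$) with $G(r)\leq\varepsilon$ for $r\geq R_\varepsilon$. Inserting this into the split integral and invoking \eqref{normaPesata1} yields
\[
\int_{\widehat{F\apice{m}_p}^0\cap\{|x|>R\}} V\apice{m}_{0,p}(x)\,\widehat{\phi\apice{m}_p}^0(x)^2\,dx \;\leq\; \varepsilon\int_{\R^N}\frac{\widehat{\phi\apice{m}_p}^0(x)^2}{|x|^2}\,dx \;=\;\varepsilon
\]
for every $R\geq R_\varepsilon$ and every $p$ close enough to $p_S$, which is the claim (the limsup being $\leq\varepsilon$ is in fact realized uniformly, giving the stated limit). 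The only subtle point is verifying that the exponent $(N-2)(p-1)/2$ is \emph{strictly greater than $1$} for $p$ near $p_S$, which is precisely what makes $G$ decay at infinity; this uses $p_S>1+\tfrac{2}{N-2}$, i.e.\ the dimensional condition $N\geq 3$. Beyond this, no compactness argument or refined asymptotics of $\phi\apice{m}_p$ is needed — the borderline $|x|^{-4}$ decay inherited from the critical profile $U$ (see \eqref{V_ptoVN}) exactly matches the weight structure given by Hardy's inequality.
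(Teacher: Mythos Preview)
Your proof is correct and takes a genuinely different, more elementary route than the paper. The paper proceeds in two steps: first it proves the integral identity
\[
\lim_{p\to p_S}\int_{\widehat{F\apice{m}_p}^0\cap\{|x|>R\}}|z\apice{m}_{0,p}|^{\frac{N}{2}(p-1)}\,dx=\int_{\{|x|>R\}}U^{2^*}\,dx
\]
via the energy convergence \eqref{limiteMezzaNormap} and the local $C^2$ convergence of $z\apice{m}_{0,p}$; then it applies H\"older with exponents $\tfrac{N}{2},\tfrac{N}{N-2}$, Sobolev embedding, and the gradient bound \eqref{boundGradienteRiscalate} to control $\int V\apice{m}_{0,p}\,(\widehat{\phi\apice{m}_p}^0)^2$ by $\big(\int|z\apice{m}_{0,p}|^{\frac{N}{2}(p-1)}\big)^{2/N}$. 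You instead bypass both steps by going directly to the pointwise Atkinson--Peletier bound \eqref{bound parte positivaFONDAMENTALE}, rescaling it to control $|x|^2V\apice{m}_{0,p}$ uniformly by a function $G(|x|)$ that decays at infinity, and then closing with the weighted normalization \eqref{normaPesata1}. This is exactly the mechanism the paper uses elsewhere (e.g.\ in the proof of \eqref{Q3Nge3}), and it yields the result with no limit argument, no H\"older/Sobolev step, and an $R_\varepsilon$ that is manifestly independent of $m$. The paper's approach is slightly more robust in that it only needs the integral convergence \eqref{limiteMezzaNormap} rather than the sharp pointwise profile, but since the latter is already available here, your argument is both valid and shorter.
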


\begin{proof}
We divide the proof into two steps.

\

\emph{STEP 1.
We show that for any $R>0$
\begin{equation}
\lim_{p\rightarrow p_S}
\int_{\widehat{F\apice{m}_p}^0\cap\{|x|> R\}}
|z\apice{m}_{0,p}(x)|^{\frac{N}{2}(p-1)} dx\ =\
\int_{\{|x|>R\}} U(x)^{\frac{2N}{N-2}}dx,
\end{equation}
where $U$ is the function in \eqref{UNgeq3}.}

\

\emph{Proof of STEP 1.} On one side by the choice of $n\apice{m}_p$ in \eqref{np} we have that
\begin{equation}\label{preced}
\int_{\{|y|<\frac{1}{n\apice{m}_p}\}}|u\apice{m}_p(y)|^{\frac{N}{2}(p-1)} dy\leq \omega_N\frac{(M\apice{m}_{0,p})^{\frac{N}{2}(p-1)}}{(n\apice{m}_p)^N}\overset{\eqref{np}}{\leq}\frac{1}{(n\apice{m}_p)^{\frac{N}{2}}}\underset{p\rightarrow p_S}{\longrightarrow} 0,
\end{equation}
so, by the definition of $z\apice{m}_{0,p}$ (see \eqref{zeta}), by \eqref{limiteMezzaNormap} and \eqref{preced} we have
\begin{eqnarray}\label{oneside}
\int_{\widehat{F\apice{m}_p}^0}
|z\apice{m}_{0,p}(x)|^{\frac{N}{2}(p-1)} dx\ &=& \ \int_{F\apice{m}_p}|u\apice{m}_p(y)|^{\frac{N}{2}(p-1)} dy\nonumber\\
& = & \int_{B\apice{m}_{0,p}}|u\apice{m}_p(y)|^{\frac{N}{2}(p-1)} dy-\int_{\{|y|<\frac{1}{n\apice{m}_p}\}}|u\apice{m}_p(y)|^{\frac{N}{2}(p-1)} dy
\nonumber
\\
& \overset{\eqref{limiteMezzaNormap}+\eqref{preced}}{\underset{p\rightarrow p_S} {\longrightarrow}} &  S_N^{\frac{N}{2}}\stackrel{\eqref{normaU}}{=}\int_{\mathbb R^N} U(x)^{\frac{2N}{N-2}}dx.
\end{eqnarray}
On the other side  as $p\rightarrow p_S$, since   $z\apice{m}_{0,p}\rightarrow U$ in $C^2_{loc}(\mathbb R^N)$, $r\apice{m}_{1,p} (M\apice{m}_{0,p})^{\frac{p-1}{2}}\rightarrow +\infty$   by $(\mathcal{A}\apice{m}_{1})$ (which holds by Proposition \ref{proposition:AmiValePerOgnim}) and  \eqref{preced} holds, we deduce
\begin{equation}\label{otherside}
\int_{\widehat{F\apice{m}_p}^0\cap \{|x|\leq R\}}
|z\apice{m}_{0,p}(x)|^{\frac{N}{2}(p-1)} dx \underset{p\rightarrow p_S} {\longrightarrow} \int_{\{|x|\leq R\}} U(x)^{\frac{2N}{N-2}}dx,
\end{equation}
for any $R>0$. Combining \eqref{oneside} and \eqref{otherside} we get
\[
\int_{\widehat{F\apice{m}_p}^0\cap \{|x|> R\}}
|z\apice{m}_{0,p}(x)|^{\frac{N}{2}(p-1)} dx \  
\underset{p\rightarrow p_S} {\longrightarrow}\int_{\{|x|> R\}} U(x)^{\frac{2N}{N-2}}dx
\]

\

\emph{STEP 2. End of the proof.}

\

\emph{Proof of STEP 2.} By using H\"older inequality with exponents $\frac{N}{2}$, $\frac{N}{N-2}$, the Sobolev embedding theorem and Lemma \ref{lemma:boundrescalatefi}
 we get, for any $R>0$ and for any $p> p_S-\delta$ (where $\delta=\delta(m)$  as in Lemma \ref{lemma:boundrescalatefi}):
\begin{eqnarray}\label{holderineqdis}
&&\hspace{-1.3cm}\int_{\widehat{F\apice{m}_p}^0\cap\{|x|> R\}}
V\apice{m}_{0,p}(x)\, \widehat{\phi\apice{m}_p}^0(x)^2\, dx\ =\nonumber \\
&&\qquad\overset{\eqref{defVp}}{=}
\int_{\widehat{F\apice{m}_p}^0\cap\{|x|> R\}}
p|z\apice{m}_{0,p}(x)|^{p-1}\, \widehat{\phi\apice{m}_p}^0(x)^2\, dx \nonumber
\\
&&\qquad\overset{\mbox{\scriptsize{ H\"older}}}{\leq} 
p_S \left[\int_{\widehat{F\apice{m}_p}^0\cap\{|x|> R\}} |z\apice{m}_{0,p}(x)|^{\frac{N}{2}(p-1)}dx \right]^{\frac{2}{N}}
\ \left\|\widehat{\phi\apice{m}_p}^0\right\|^2_{L^\frac{2N}{N-2}(\mathbb R^N)}
\nonumber
\\
&&\qquad\overset{\mbox{\scriptsize{ Sobolev}}} {\leq }
\frac{p_S}{\sqrt{S_N}} \left[\int_{\widehat{F\apice{m}_p}^0\cap\{|x|> R\}} |z\apice{m}_{0,p}(x)|^{\frac{N}{2}(p-1)}dx \right]^{\frac{2}{N}}
\ \left\|\nabla\widehat{\phi\apice{m}_p}^0\right\|^2_{L^2(\mathbb R^N)}
\nonumber
\\
&&\qquad\overset{\mbox{\scriptsize{ Lemma \ref{lemma:boundrescalatefi}}}}{\leq}  C \left[\int_{\widehat{F\apice{m}_p}^0\cap\{|x|> R\}} |z\apice{m}_{0,p}(x)|^{\frac{N}{2}(p-1)}dx \right]^{\frac{2}{N}}.
\end{eqnarray}
Let $\varepsilon>0$ and  $R_{\varepsilon}>0$ such that
\begin{equation}\label{RgrandeU}
\int_{\{|x|>R\}} U(x)^{\frac{2N}{N-2}}dx\leq \frac{\varepsilon}{C} \ \mbox{ for }\ R\geq R_{\varepsilon}.
\end{equation}
Passing to the limit into \eqref{holderineqdis}, by {\sl STEP 1} and \eqref{RgrandeU} we then have
\begin{eqnarray*}
\lim_{p\rightarrow p_S}\int_{\widehat{F\apice{m}_p}^0\cap\{|x|> R\}}
V\apice{m}_{0,p}(x)\, \widehat{\phi\apice{m}_p}^0(x)\, ^2 dx
&\leq & \varepsilon \ \mbox{ for }\ R\geq R_{\varepsilon}.
\end{eqnarray*}
\end{proof}

\

\subsection{Estimates in the remaining nodal regions} \label{subsectNegative}

\

\

Let us consider the radial function $f\apice{m}_p$ defined in \eqref{Q3Nge3}:
\begin{equation} \label{definition:f_p}
f\apice{m}_{p}(y)=|y|^2 |u\apice{m}_p(y)|^{p-1}, \qquad y\in B. 
\end{equation}
The next two lemmas provide estimates of $f\apice{m}_p$ when $|y|$ belongs to suitable subsets of  $[r\apice{m}_{1,p},1]$,
where $r\apice{m}_{1,p}$ is the \emph{first nodal radius} of $u\apice{m}_p$ as defined in \eqref{rp}.

\

\begin{lemma}\label{proposition:StimaMaxASxDiEpsilon_p^-}
Let $m\in\N^+$. For any $\varepsilon>0$ there exists $\widehat{K_{\varepsilon}}(=\widehat{K_{\varepsilon}}(m))>1$ such that for any $K\geq \widehat{K_{\varepsilon}}$, there exists $\delta_{K,\varepsilon}(=\delta_{K,\varepsilon}(m))>0$ such that, for any $i=1,\ldots, m-1$, the set
\begin{equation}
\label{defG}
\emptyset\neq G\apice{\emph m}_{i,p, K} := \big\{y\in\R^N\ :\ r\apice{\emph m}_{i,p}< |y|< \frac{1}{K}(M\apice{\emph m}_{i,p})^{-\frac{p-1}{2}}\big\} \subset B\apice{\emph m}_{i,p}, \quad \mbox{ for }\  p\geq p_S-\delta_{K,\varepsilon}
\end{equation}
and
\begin{equation}\label{stimafmG} \max_{y\in \bigcup_{i=1}^{m-1}\overline{G\apice{\emph m}_{i,p, K}}}f\apice{\emph m}_p(y) \leq \varepsilon,\quad \mbox{ for }\  p\geq p_S-\delta_{K,\varepsilon}.
\end{equation}
%
\end{lemma}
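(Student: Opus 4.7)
The plan is essentially elementary: the annulus $G\apice{m}_{i,p,K}$ is engineered precisely so that the trivial sup-norm bound $|u\apice{m}_p(y)|\leq M\apice{m}_{i,p}$ on the nodal region $B\apice{m}_{i,p}$ already forces $f\apice{m}_p$ to be small there. The only real preliminary work is to verify that $G\apice{m}_{i,p,K}$ is nonempty and contained in $B\apice{m}_{i,p}$, which is a direct consequence of the asymptotic properties $(\mathcal{A}\apice{m}_{i+1})$ and $(\mathcal{C}\apice{m}_i)$ already established.

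First I would verify the geometric containment. Nonemptiness of $G\apice{m}_{i,p,K}$ amounts to $r\apice{m}_{i,p}(M\apice{m}_{i,p})^{\frac{p-1}{2}}<1/K$, which holds for $p$ close to $p_S$ by property $(\mathcal{C}\apice{m}_i)$ (which follows from $(\mathcal{B}\apice{m}_i)$, Propositions~\ref{prop:Bm-1ValePerOgnim} and~\ref{prop:BmiValePerOgnim}). The inclusion $G\apice{m}_{i,p,K}\subset B\apice{m}_{i,p}$ amounts to $\frac{1}{K}(M\apice{m}_{i,p})^{-\frac{p-1}{2}}\leq r\apice{m}_{i+1,p}$, i.e. $r\apice{m}_{i+1,p}(M\apice{m}_{i,p})^{\frac{p-1}{2}}\geq K$, which is exactly $(\mathcal{A}\apice{m}_{i+1})$ from Proposition~\ref{proposition:AmiValePerOgnim}. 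Each of these gives a threshold $\delta_{K,\varepsilon,i}>0$; taking the minimum over the finite range $i=1,\ldots,m-1$ yields a single $\delta_{K,\varepsilon}>0$ valid for all $i$.

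Next, since $y\in G\apice{m}_{i,p,K}\subset B\apice{m}_{i,p}$ implies $|u\apice{m}_p(y)|\leq M\apice{m}_{i,p}$ by the definition of $M\apice{m}_{i,p}$ in~\eqref{Mpm}, and $|y|^2<K^{-2}(M\apice{m}_{i,p})^{-(p-1)}$ by the definition of $G\apice{m}_{i,p,K}$, one immediately obtains
\[
f\apice{m}_p(y) \;=\; |y|^2|u\apice{m}_p(y)|^{p-1} \;\leq\; K^{-2}(M\apice{m}_{i,p})^{-(p-1)}\cdot(M\apice{m}_{i,p})^{p-1} \;=\; \frac{1}{K^{2}}.
\]
Choosing for instance $\widehat{K_\varepsilon}:=\lceil\varepsilon^{-1/2}\rceil+1$ makes the right-hand side $\leq\varepsilon$ for every $K\geq\widehat{K_\varepsilon}$, uniformly in $y$ and in $i\in\{1,\ldots,m-1\}$, which gives~\eqref{stimafmG}.

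I do not expect any serious obstacle here: the lemma is a clean consequence of the two scaling properties $(\mathcal{A}\apice{m}_{i+1})$ and $(\mathcal{B}\apice{m}_i)$ combined with the trivial bound $|u\apice{m}_p|\leq M\apice{m}_{i,p}$ on each nodal region. The cleverness lies in the choice of the cutoff radius $K^{-1}(M\apice{m}_{i,p})^{-\frac{p-1}{2}}$, which is designed exactly so that the product $|y|^2(M\apice{m}_{i,p})^{p-1}$ appearing as an upper envelope of $f\apice{m}_p$ is automatically controlled by $K^{-2}$.
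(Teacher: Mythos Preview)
Your argument is correct and uses exactly the same two ingredients as the paper: the asymptotic properties $(\mathcal{C}\apice{m}_i)$ and $(\mathcal{A}\apice{m}_{i+1})$ for the geometric containment, and the trivial bound $|u\apice{m}_p|\leq M\apice{m}_{i,p}$ on $B\apice{m}_{i,p}$ for the size estimate. The only difference is presentational: the paper packages the inequality $f\apice{m}_p(y)\leq K^{-2}$ inside a contradiction argument (assuming $c_{K,p,i}\geq\alpha^2$ and deriving $\alpha^2\leq 1/K_n^2\leq 1/n^2$), whereas you state the direct bound, which is cleaner. One small imprecision: for $i=m-1$ the property $(\mathcal{A}\apice{m}_{m})$ is not defined; there the inclusion $G\apice{m}_{m-1,p,K}\subset B\apice{m}_{m-1,p}$ follows instead from $r\apice{m}_{m,p}=1$ together with $(M\apice{m}_{m-1,p})^{\frac{p-1}{2}}\to+\infty$ from~\eqref{epsilonpmN3}, as the paper notes.
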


\

\begin{proof}
Let us fix $i\in\{1\ldots, m-1\}$. 
Observe that by the limit properties \eqref{Cmi} and either $(\mathcal A\apice{m}_{i+1})$ when $i\neq m-1$ or \eqref{epsilonpmN3} when $i=m-1$ (see Proposition \ref{proposition:AmiValePerOgnim},  \ref{prop:Bm-1ValePerOgnim} and \ref{prop:BmiValePerOgnim} in  Section \ref{Section:Asymptotic2}), we get
\[
r\apice{m}_{i,p} (M\apice{m}_{i,p})^{\frac{p-1}{2}}\rightarrow 0\ \mbox{ and }\ r\apice{m}_{i+1,p} (M\apice{m}_{i,p})^{\frac{p-1}{2}}\rightarrow +\infty, \ \ \mbox{ as }p\rightarrow p_S.
\] 
So for any fixed $K>1$ there exists $\delta_{K,i}(=\delta_{K,i}(m))>0$ such that 
\begin{equation}\label{ordine1}
r\apice{m}_{i,p}<\frac{1}{K} (M\apice{m}_{i,p})^{-\frac{p-1}{2}}<r\apice{m}_{i+1,p},
\quad\mbox{ for }p\geq p_S-\delta_{K,i}.
\end{equation}
So for $K>1$ and  $p\geq p_S-\delta_{K,i}$ it is well defined
\[c_{K,p,i}(=c_{K,p,i}(m)):= \max_{y\in \overline{G\apice{\emph m}_{i,p, K}}}f\apice{m}_p(y). \]

Next we show that for any $\varepsilon>0$ there exists  $\widehat{K_{\varepsilon,i}}(=\widehat{K_{\varepsilon,i}}(m))>1$ such that for any
 $K\geq \widehat{K_{\varepsilon,i}}$, there exists $\delta_{K,i,\varepsilon}(=\delta_{K,i,\varepsilon}(m))\in (0,\delta_{K,i}]$ such that
\begin{equation}
\label{tesickpi}
 c_{K,p,i} \leq \varepsilon,\quad \mbox{ for }\ p\geq p_S-\delta_{K,i,\varepsilon}.
\end{equation}
Arguing by contradiction, we can assume that there exists $\alpha>0$ such that for all $n\in\N$, there exist $K_n(=K_n(m))\geq n$ and $p_n(=p_n(m))\geq p_S-\delta_{K_n,i}$ such that
\begin{equation}
c_{n,i}:=c_{K_n,p_{n},i}\geq \alpha^2 \label{assurdoC}.
\end{equation}
%
%
%
%
%
%
Since $p_n\geq p_S-\delta_{K_n,i}$, by \eqref{ordine1} we have that $r\apice{m}_{i,p_n}< \frac{1}{K_n} (M\apice{m}_{i,p_n})^{-\frac{p_n-1}{2}} <r\apice{m}_{i+1,p_n}$. For any $n\in\mathbb N$ let $r_n(=r_n(i,m))\in\R$  be the radius such that \[\left\{\begin{array}{lr} r\apice{m}_{i,p_n}\leq r_n\leq \frac{1}{K_n} (M\apice{m}_{i,p_n})^{-\frac{p_n-1}{2}}\\
\\
f\apice{m}_{p_n}(r_n)=(r_n)^2|u\apice{m}_{p_n}(r_n)|^{p_n-1}= c_{n,i}.\end{array}\right.\] Then 
\[(r_n)^2 (M\apice{m}_{i,p_n})^{p_n-1}= (r_n)^2 |u\apice{m}_{p_n}(s\apice{m}_{i,p_n})|^{p_n-1}\geq   (r_n)^2 |u\apice{m}_{p_n}(r_n)|^{p_n-1}= c_{n,i}\stackrel{\eqref{assurdoC}}{\geq} \alpha^2 >0.\]
On the other side by construction 
\[(r_n)^2(M\apice{m}_{i,p_n})^{p_n-1} \leq\frac{1}{(K_n)^2} \leq \frac{1}{n^2}, \quad \mbox{for all \ $n\in\N$}\]
which gives a contradiction and so proves \eqref{tesickpi}.\\
The conclusion of the proof follows  setting
\begin{eqnarray*}
&&\widehat{K_{\varepsilon}}(m) := \max\{\widehat{K_{\varepsilon,i}}(m),\  i=1,\ldots, m-1 \}\\
&& \delta_{K,\varepsilon}(m):=\min \{ \delta_{K,i,\varepsilon}(m),\ i=1,\ldots, m-1 \}
\end{eqnarray*}
so by \eqref{ordine1} we get \eqref{defG}, while  \eqref{tesickpi} proves \eqref{stimafmG}.
\end{proof}

\

\begin{lemma}\label{proposition:StimaMaxADxDiEpsilon_p^-}
Let $m\in\N^+$. For any $\varepsilon>0$ there exist $\delta_{\varepsilon}(=\delta_{\varepsilon}(m))>0$ and $K_{\varepsilon}(=K_{\varepsilon}(m))\geq \widehat{K_{\varepsilon}}$ (where $\widehat{K_{\varepsilon}}>1$ is defined in Lemma \ref{proposition:StimaMaxASxDiEpsilon_p^-}) such that for any $i=1,\ldots, m-1$ the set
\begin{equation}
\label{defH}
\emptyset\neq H\apice{\emph m}_{i,p, \varepsilon} :=\big\{y\in\R^N\ :\  K_{\varepsilon} (M\apice{\emph m}_{i,p})^{-\frac{p-1}{2}} < |y|< r\apice{\emph m}_{i+1,p}\big\}\subset B\apice{\emph m}_{i,p},\quad \mbox{ for }p\geq p_S- \delta_{\varepsilon}
\end{equation}
and
\begin{equation}\label{stimafmH} 
 \max_{y\in \bigcup_{i=1}^{m-1}\overline{H\apice{\emph m}_{i,p, \varepsilon}}}f\apice{\emph m}_p(y)\leq \varepsilon,\qquad \mbox{ for }p\geq p_S-\delta_{\varepsilon}.
 \end{equation}
\end{lemma}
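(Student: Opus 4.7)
The plan is to apply the pointwise upper bound from Proposition \ref{StimaFondamentaleNegativa} --- which, thanks to Corollary \ref{cor:RmiSatisfied}, is now available for every $i=1,\ldots,m-1$ without any extra hypothesis --- and then reduce the bound on $f\apice{m}_p(y)=|y|^2|u\apice{m}_p(y)|^{p-1}$ to a one-variable decay estimate via the rescaling $s:=|y|(M\apice{m}_{i,p})^{(p-1)/2}$. Since $i$ runs over the finite set $\{1,\ldots,m-1\}$, it suffices to work at a fixed $i$ and then take a max/min over $i$ at the end.

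Fix once and for all $\alpha\in(0,\frac{N-2}{2})$ and let $\gamma=\gamma(\alpha,m)$ be the constant of Proposition \ref{StimaFondamentaleNegativa}. First I would check that, for any $K\geq\widehat{K_\varepsilon}$ and any $p$ close enough to $p_S$, the set $H\apice{m}_{i,p,K}$ is nonempty and contained in the domain $C\apice{m}_{i,p}$ where \eqref{bound parte negativaFONDAMENTALE} holds. This is immediate from the two limit relations
\[
s\apice{m}_{i,p}(M\apice{m}_{i,p})^{(p-1)/2}\to 0\qquad\text{and}\qquad r\apice{m}_{i+1,p}(M\apice{m}_{i,p})^{(p-1)/2}\to+\infty,
\]
which are exactly $(\mathcal{B}\apice{m}_i)$ and $(\mathcal{A}\apice{m}_{i+1})$ from Propositions \ref{proposition:AmiValePerOgnim}--\ref{prop:BmiValePerOgnim}. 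Together they give $\gamma^{-1/N}s\apice{m}_{i,p}<K(M\apice{m}_{i,p})^{-(p-1)/2}<r\apice{m}_{i+1,p}$.

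The core estimate is then straightforward. For $y\in H\apice{m}_{i,p,K}$, plugging \eqref{bound parte negativaFONDAMENTALE} into the definition of $f\apice{m}_p$ and writing $s:=|y|(M\apice{m}_{i,p})^{(p-1)/2}>K$, one obtains
\[
f\apice{m}_p(y)\;\leq\;\frac{s^2}{\bigl[1+\tfrac{2\alpha}{N(N-2)^2}s^2\bigr]^{(N-2)(p-1)/2}}=:g_p(s).
\]
Since $(N-2)(p_S-1)=4$, for $p$ sufficiently close to $p_S$ one has $(N-2)(p-1)/2\geq 3/2$, and hence for $s\geq 1$
\[
g_p(s)\;\leq\;\frac{s^2}{\bigl(\tfrac{2\alpha}{N(N-2)^2}s^2\bigr)^{3/2}}\;=\;\frac{C_{N,\alpha}}{s}.
\]
Thus choosing $K_{\varepsilon,i}\geq\max\{\widehat{K_\varepsilon},\,C_{N,\alpha}/\varepsilon,\,1\}$ forces $g_p(s)\leq\varepsilon$ as soon as $s>K_{\varepsilon,i}$.

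Finally, setting $K_\varepsilon:=\max_{1\leq i\leq m-1}K_{\varepsilon,i}$ and letting $\delta_\varepsilon>0$ be smaller than the finitely many $p$-thresholds appearing above (the one giving $(N-2)(p-1)/2\geq 3/2$ and the ones giving the set inclusion $H\apice{m}_{i,p,\varepsilon}\subset C\apice{m}_{i,p}$ for this fixed $K_\varepsilon$) yields the claim uniformly in $i$. The proof is essentially parallel to that of Lemma \ref{proposition:StimaMaxASxDiEpsilon_p^-}; no serious obstacle is expected, and the only subtlety --- the place where one must be careful with the order of quantifiers --- is that $K_\varepsilon$ must be fixed \emph{before} $\delta_\varepsilon$, so that the set inclusion certifying the validity of \eqref{bound parte negativaFONDAMENTALE} on $H\apice{m}_{i,p,\varepsilon}$ is available for exactly the chosen $K_\varepsilon$.
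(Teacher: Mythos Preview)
Your argument is correct and follows essentially the same route as the paper: invoke the pointwise bound \eqref{bound parte negativaFONDAMENTALE} on $C\apice{m}_{i,p}$ (available for all $i$ by Corollary \ref{cor:RmiSatisfied}), rescale via $s=|y|(M\apice{m}_{i,p})^{(p-1)/2}$, and show the resulting one-variable function is small for $s>K_\varepsilon$; the paper does this via an explicit monotonicity computation for $g_p$ followed by evaluation at the inner radius, whereas you shortcut with the direct algebraic bound $g_p(s)\leq C_{N,\alpha}/s$. One small slip: for $i=m-1$ the divergence $r\apice{m}_{i+1,p}(M\apice{m}_{i,p})^{(p-1)/2}\to+\infty$ is not ``$(\mathcal{A}\apice{m}_m)$'' (which is undefined) but follows from $r\apice{m}_{m,p}=1$ together with \eqref{epsilonpmN3}.
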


\

\begin{proof}
We divide the proof into three steps.

\

\emph{STEP 1. Let $m\in\N^+$, $i\in\{1,\ldots, m-1\}$ and define \[ g\apice{m}_{p,i}(r):=\frac{ (M\apice{\emph m}_{i,p})^{p-1}r^2 }{\left[ 1+ \frac{2\alpha}{N (N-2)^2 } (M\apice{\emph m}_{i,p})^{p-1}r^2 \right]^{\frac{N-2}{2}(p-1)}},\] where $\alpha\in (0, \frac{N-2}{2})$ is fixed. We show that there exists $\widehat{K}>0$ (independent of $i$ and $m$) and $\widehat{\delta_i}(=\widehat{\delta_i}(m))>0$ such that: 
\[\widehat{K}(M\apice{\emph m}_{i,p})^{-\frac{p-1}{2}}<r\apice{\emph m}_{i+1,p},\ \mbox{ if }p\geq p_S-\widehat{\delta_i}
\]
and the function $g\apice{m}_{p,i}$ is monotone decreasing in  $[\widehat K(M\apice{\emph m}_{i,p})^{-\frac{p-1}{2}}, r\apice{\emph m}_{i+1,p}]$, for any $p\geq p_S-\widehat{\delta_i}$.}

\

\

\emph{Proof of STEP 1.} Let $\widehat{K}:=2\left[\frac{N (N-2)^2}{2\alpha  }\right]^{\frac{1}{2}}(>0)$. Since,  by \eqref{epsilonpmN3} for $i=m-1$ and property $(\mathcal A\apice{m}_{i+1})$  (which holds true  by Proposition \ref{proposition:AmiValePerOgnim}) for $i\neq m-1$, we have that 
\[r\apice{m}_{i+1,p}( M\apice{m}_{i,p})^{\frac{p-1}{2}}\rightarrow +\infty\ \mbox{ as }\ p\rightarrow p_S,\]
then there exists $\delta_{\widehat{K},i}(=\delta_{\widehat{K},i}(m))>0$ such that 
\[\widehat{K}(M\apice{m}_{i,p})^{-\frac{p-1}{2}}<r\apice{m}_{i+1,p},\ \mbox{ if }p\geq p_S-\delta_{\widehat{K},i}.
\] 
Moreover by easy computations
\begin{eqnarray*}
(g\apice{m}_{p,i})'(r)= \frac{2(M\apice{m}_{i,p})^{p-1}r}{\left[ 1+ \frac{2\alpha}{N (N-2)^2 } (M\apice{m}_{i,p})^{p-1}r^2 \right]^{\frac{(N-2)}{2}(p-1)+1}} \left[     1- \frac{\left[(p-1)(N-2)-2 \right]\alpha  (M\apice{m}_{i,p})^{p-1}}{N (N-2)^2 }r^2  \right]
\end{eqnarray*}
hence $(g\apice{m}_{p,i})'(r)\leq 0$ if and only if 
\[r\geq \left[\frac{N (N-2)^2 }{\left[(p-1)(N-2)-2 \right]\alpha}\right]^{\frac{1}{2}}(M\apice{m}_{i,p})^{-\frac{p-1}{2}}
%
%
.\]
Since by our choice of $\widehat K$ we have
\[\left[\frac{N (N-2)^2 }{\left[(p-1)(N-2)-2 \right]\alpha  }\right]^{\frac{1}{2}}\longrightarrow \frac{\widehat K }{2  }\ \mbox{ as }\ p\rightarrow p_S,\] there exists $\widetilde{\delta}>0$ such that
if $p>p_S-\widetilde{\delta}$ then  $(g\apice{m}_{p,i})'(r)\leq 0$ for $r\geq \widehat{K}(M\apice{m}_{i,p})^{-\frac{p-1}{2}}$. To conclude the proof of \emph{STEP 1} it is enough to take $\widehat{\delta_i}(=\widehat{\delta_i}(m)):=\min\{\delta_{\widehat{K},i}(m),\widetilde{\delta}\}$.

\

\

\emph{STEP 2. Let $m\in\N^+$. Let us fix $i\in\{1,\ldots, m-1\}$ and $\varepsilon>0$. We show that there exist $\delta_{\varepsilon,i}(=\delta_{\varepsilon,i}(m))>0$ and $K_{\varepsilon}(=K_{\varepsilon}(m))\geq \widehat{K_{\varepsilon}}$ (where $\widehat{K_{\varepsilon}}>1$ is defined in Lemma \ref{proposition:StimaMaxASxDiEpsilon_p^-}) such that 
\begin{equation}
\label{Hstar}
\emptyset\neq H\apice{\emph m}_{i,p, \varepsilon} \subset B\apice{\emph m}_{i,p},\quad \mbox{ for }p\geq p_S- \delta_{\varepsilon,i}
\end{equation}
and
\begin{equation}\label{stimafmHstar} 
 \max_{y\in \overline{H\apice{\emph m}_{i,p, \varepsilon}}}f\apice{\emph m}_p(y)\leq \varepsilon,\qquad \mbox{ for }p\geq p_S-\delta_{\varepsilon,i}.
 \end{equation}
}

\

\emph{Proof of STEP 2.} By  Proposition \ref{StimaFondamentaleNegativa} and Corollary \ref{cor:RmiSatisfied} we know that there exist $\gamma=\gamma(\alpha, m)\in (0,1)$, $\gamma(\alpha,m)\rightarrow 1$ as $\alpha\rightarrow 0$ and $\delta_i=\delta_i(\alpha,m)>0$ such that 
\begin{equation}\label{bound parte negativaFONDAMENTALE2}
f\apice{m}_p(r)\leq g\apice{m}_{p,i}(r), \qquad \mbox{ for }\ r\in (\gamma^{-\frac{1}{N}}s\apice{m}_{i,p}, r\apice{m}_{i+1,p}],\ p\geq p_S-\delta_i
\end{equation}
Observe that by property \eqref{Bmi} (which holds true by Propositions \ref{prop:Bm-1ValePerOgnim} and \ref{prop:BmiValePerOgnim}) 
\[s\apice{m}_{i,p}(M\apice{m}_{i,p})^{\frac{p-1}{2}}\rightarrow 0,\quad \mbox{ as }\ p\rightarrow p_S,\] so there exists $\widetilde\delta_{\widehat K,i}(m)>0$ such that
\begin{equation}\label{intervalli}\gamma^{-\frac{1}{N}}s\apice{m}_{i,p}<\widehat K (M\apice{m}_{i,p})^{-\frac{p-1}{2}}, \ \ \mbox{ for }  p\geq p_S-\widetilde\delta_{\widehat K,i}(m)
\end{equation}
where $\widehat K$ is the number obtained  in \emph{STEP 1.}
\\
Observe also that since,  by \eqref{epsilonpmN3} for $i=m-1$ and property $(\mathcal A\apice{m}_{i+1})$  (which holds true  by Proposition \ref{proposition:AmiValePerOgnim}) for $i\neq m-1$, we have that 
\[r\apice{m}_{i+1,p}( M\apice{m}_{i,p})^{\frac{p-1}{2}}\rightarrow +\infty\ \mbox{ as }\ p\rightarrow p_S,\]
then for any $K\geq\widehat{K}$ there exists $\delta_{K,i}(m)>0$ such that
\begin{equation}\label{intervalli2}\widehat K (M\apice{m}_{i,p})^{-\frac{p-1}{2}}\leq K (M\apice{m}_{i,p})^{-\frac{p-1}{2}} < r\apice{m}_{i+1,p} , \ \ \mbox{ for }  p\geq p_S-\delta_{K,i}(m).
\end{equation}
By \emph{STEP 1}, \eqref{bound parte negativaFONDAMENTALE2}, \eqref{intervalli} and \eqref{intervalli2} we have that for any $K\geq \widehat K$ and for $p\geq p_S-\min \{\delta_i(\alpha,m),\widetilde\delta_{\widehat K,i}(m), \delta_{K,i}(m), \widehat{\delta_i}\}$ (where $\widehat{\delta_i}(=\widehat{\delta_i}(m))$ is the one in \emph{STEP 1})
\begin{equation}\label{fPiccola}
f\apice{m}_p(r)\leq g\apice{m}_{p,i}(r)\leq g\apice{m}_{p,i}(K(M\apice{m}_{i,p})^{-\frac{p-1}{2}})\qquad\mbox{ for } r\in ( K(M\apice{m}_{i,p})^{-\frac{p-1}{2}},r\apice{m}_{i+1,p}].
\end{equation}

Moreover  if  $p> p_S-\frac{2}{N-2}$ then $\frac{N-2}{2}(p-1)>1$,  so
\begin{equation}\label{gvaazero}
g\apice{m}_{p,i}(K(M\apice{m}_{i,p})^{-\frac{p-1}{2}})=\frac{ K^2 }{\left[ 1+ \frac{2\alpha}{N (N-2)^2 } K^2 \right]^{\frac{N-2}{2}(p-1)}}\longrightarrow 0\qquad \mbox{ as }\ K\rightarrow +\infty
\end{equation}
The conclusion follows combining \eqref{gvaazero} with \eqref{fPiccola}.

\

\

\emph{STEP 3. Conclusion.}

\

\emph{Proof of STEP 3.} The proof follows by \emph{STEP 2.} taking
\[\delta_{\varepsilon}(=\delta_{\varepsilon}(m)):=\min\{\delta_{\varepsilon,i}(m), \ i=1,\ldots, m-1  \}.\]
\end{proof}

\

\subsection{Proof of Proposition \ref{theorem:limiteBeta1}}
\label{subsectionproofpro}

\

\

\begin{proof}[Proof] 
Arguing by contradiction let us assume that \eqref{beta1maggioreUguale} does not hold. Then  there exist $\varepsilon >0$ and a sequence $p_j\rightarrow p_S$,  as $j\rightarrow +\infty$, such that
\begin{equation} \label{stimasx}
\widetilde{\beta}_1(m,p_j)\rightarrow -(N-1)-10\,\varepsilon,\ \ \mbox{ as}\ j\rightarrow +\infty.
\end{equation}
Corresponding to this number $\varepsilon>0$ we can take $K_{\varepsilon}>1$  as in Lemma \ref{proposition:StimaMaxADxDiEpsilon_p^-}. Then we consider the $m-1$ scalings $\widehat{\phi\apice{m}_{p_j}}^i$, $i=1,\ldots, m-1$, defined  in \eqref{fiCappuccio} and observe that, by \eqref{normaPesata1} 
\[\liminf_{j\rightarrow +\infty} \int_{\{|x|\in[\frac{1}{K_{\varepsilon}},K_{\varepsilon}]\}}\frac{\widehat{\phi\apice{m}_{p_j}}^i(x)^2}{|x|^2}dx\quad \in [0,1],\qquad \forall\, i=1,\ldots, m-1.\]
Hence there exists a subsequence, that we still denote by $p_j$, for which  one of the following two statements holds:\\\\
{\emph{CASE 1.}}
There exists $\alpha_{\varepsilon}\in(0,1]$ and $\kappa\in\{1, \ldots, m-1\} $ such that:
\begin{equation}
\label{case1}
 \int_{\{|x|\in[\frac{1}{K_{\varepsilon}},K_{\varepsilon}]\}}\frac{\widehat{\phi\apice{m}_{p_j}}^{\kappa}(x)^2}{|x|^2}dx\geq \alpha_{\varepsilon},  \quad\forall\, j\in\N.
\end{equation}
{\emph{CASE 2.}} 
\begin{equation}\label{assumptionCASE2} \int_{\{|x|\in[\frac{1}{K_{\varepsilon}},K_{\varepsilon}]\}}\frac{\widehat{\phi\apice{m}_{p_j}}^i(x)^2}{|x|^2}dx\longrightarrow 0\quad\mbox{ as }\ j\rightarrow +\infty, \qquad\forall\, i=1,\ldots, m-1.
\end{equation}

\

In \emph{CASE 1}  we will prove that 
\begin{equation}\label{conclusion:case1}
\widetilde\beta_1(m,p_{j})\rightarrow -(N-1),\quad \mbox{ as }\ j\rightarrow +\infty,
\end{equation}
which contradicts \eqref{stimasx}.
\\
In \emph{CASE 2} we will show that there exists $j_{\varepsilon}\in\N$ such that 
\begin{equation}
\label{conclusion:case2}
\widetilde{\beta}_1(m,p_{j})\geq  -(N-1) -9\,\varepsilon, \quad \mbox{for any $j\geq j_{\varepsilon}$,}
\end{equation}
which also contradicts \eqref{stimasx}. So the assertion \eqref{beta1maggioreUguale} will be proved.

\

\

{\emph{Proof in  CASE 1.}}

\

We will pass to the limit as $j\rightarrow + \infty$ into the equation \eqref{eq cappuccio p} satisfied by  the scaling $\widehat{\phi\apice{m}_{p_j}}^{\kappa}$. Since \eqref{npepsilonp} implies that, for any fixed $\rho\in C^{\infty}_0(\R^N\setminus\{0\})$,  $\supp (\rho)\subset (\widetilde{T}\apice{m}_{\kappa,p_j}\cap\widehat{A\apice{m}_{p_j}}^{\kappa})$ for $j$ sufficiently large, by \eqref{eq cappuccio p} we have
\begin{equation}\label{equaziocappuccio}
\int_{\R^N \setminus\{0\}}\!\!\nabla \widehat{\phi\apice{m}_{p_j}}^{\kappa}\nabla\rho\ dx
\
-
\
\int_{\R^N \setminus\{0\}}\!\! V\apice{m}_{\kappa,p_j}(x)\widehat{\phi\apice{m}_{p_j}}^{\kappa}\rho\ dx\
-
\
\widetilde{\beta_1}(m,p_j)\int_{\R^N \setminus\{0\}}\!\!\frac{\widehat{\phi\apice{m}_{p_j}}^{\kappa}\rho}{|x|^2}\ dx=0,
\end{equation}
where in particular $V\apice{m}_{\kappa,p_j}$ satisfies \eqref{defVp}.\\
By Lemma \ref{lemma:boundrescalatefi} we know that $\widehat{\phi\apice{m}_{p_j}}^{\kappa}$ is bounded 
in the reflexive space $D^{1,2}_{rad}(\R^N)$, hence there exists $\widehat{\phi}=\widehat{\phi}\apice{m}_{\kappa} \in D^{1,2}_{rad}(\R^N)$ such that up to a subsequence 
\begin{equation}\label{weakD}\widehat{\phi\apice{m}_{p_j}}^{\kappa}\rightharpoonup \widehat{\phi} \qquad \mbox{ in  } D^{1,2}_{rad}(\R^N) \quad\mbox{ as $j\rightarrow +\infty$}
\end{equation}
and so, by the continuous embedding $D^{1,2}_{rad}(\R^N)\hookrightarrow L^2_{\frac{1}{|x|}}(\R^N)$ (defined in \eqref{def:L2Pesato}), we also have 
\begin{eqnarray}
 &&\widehat{\phi\apice{m}_{p_j}}^{\kappa}\rightharpoonup \widehat{\phi} \qquad \mbox{ in  } L^2_{\frac{1}{|x|}}(\R^N) \quad\mbox{ as $j\rightarrow +\infty$}.\label{weakPeso}
 \end{eqnarray}
Moreover, for any bounded set $M\subset \R^N$, by the compact embedding $H^1(M)\hookrightarrow L^2(M)$ we have
\begin{equation}\label{M}\widehat{\phi\apice{m}_{p_j}}^{\kappa}\rightarrow \widehat{\phi} \qquad \mbox{ in  } L^{2}(M)\quad\mbox{ as $j\rightarrow +\infty$}
\end{equation}
and so also
\begin{equation}\label{limiteae} \widehat{\phi\apice{m}_{p_j}}^{\kappa}\rightarrow \widehat{\phi}\qquad a.e. \mbox{ in }\R^N \quad\mbox{ as $j\rightarrow +\infty$}.
\end{equation}
Observe that by \eqref{limiteae} $\widehat{\phi}\geq 0$.
Next we show that
\begin{equation}\label{finonnulla}\widehat{\phi}\not\equiv 0.
\end{equation}
Indeed by assumption \eqref{case1} 
\begin{equation}\label{BellaCondizioneN=2}
\int_{\{|x|\in[\frac{1}{K_{\varepsilon}},K_{\varepsilon}]\}}\frac{\widehat{\phi\apice{m}_{p_j}}^{\kappa}(x)^2}{|x|^2} dx
\geq \alpha_{\varepsilon}>0,\quad \mbox{ for any }j\in\mathbb N.
\end{equation}
Hence taking $M=\{x\in\R^N\, :\, |x|\in[\frac{1}{K_{\varepsilon}},K_{\varepsilon}]\}$, by \eqref{M} we have, as $j\rightarrow +\infty$, that
\[\int_{\{|x|\in[\frac{1}{K_{\varepsilon}},K_{\varepsilon}]\}}\!\!\!\!\frac{\widehat{\phi\apice{m}_{p_j}}^{\kappa}(x)^2}{|x|^2} dx\leq K_{\varepsilon}^2  \int_{\{|x|\in[\frac{1}{K_{\varepsilon}},K_{\varepsilon}]\}}\!\!\!\!\widehat{\phi\apice{m}_{p_j}}^{\kappa}(x)^2 dx \longrightarrow K_{\varepsilon}^2  \int_{\{|x|\in[\frac{1}{K_{\varepsilon}},K_{\varepsilon}]\}}\!\!\!\!\widehat{\phi}(x)^2 dx.\]
Combining this with \eqref{BellaCondizioneN=2} we get
\[ \int_{\{|x|\in[\frac{1}{K_{\varepsilon}},K_{\varepsilon}]\}}\widehat{\phi}(x)^2 dx\geq \frac{\alpha_{\varepsilon}}{K_{\varepsilon}^2} >0,\]
thus proving \eqref{finonnulla}.
\\
We pass to the limit as $j\rightarrow +\infty$ into \eqref{equaziocappuccio} as follows. By Lemma \ref{lemma:beta1pbounded} there exists $\widetilde\beta_1\apice{m}\leq 0$ such that up to a subsequence
\begin{equation}\label{limiteBeta1}
\widetilde\beta_1(m,p_j)\to \widetilde\beta_1\apice{m}\qquad\mbox{as $j\to+\infty$,}
\end{equation}
by \eqref{weakD}
\[
\int_{\R^N \setminus\{0\}}\nabla \widehat{\phi\apice{m}_{p_j}}^{\kappa}\,\nabla\rho\ dx
\ \rightarrow
\int_{\R^N \setminus\{0\}}\nabla \widehat{\phi}\,\nabla\rho\ dx \ \ \mbox{ as }\ j\rightarrow +\infty,
\]
by \eqref{weakPeso}
\begin{equation}\label{convdeboleL2peso}
\int_{\R^N \setminus\{0\}}\frac{\widehat{\phi\apice{m}_{p_j}}^{\kappa}\,\rho}{|x|^2}dx\rightarrow \int_{\R^N \setminus\{0\}}\frac{\widehat{\phi}\,\rho}{|x|^2}dx\ \ \mbox{ as }\ j\rightarrow +\infty,
\end{equation}
for any test function $\rho$ as in \eqref{equaziocappuccio}.
Finally we show that
\[
\int_{\R^N \setminus\{0\}}V\apice{m}_{\kappa,p_j}(x)\,\widehat{\phi\apice{m}_{p_j}}^{\kappa}\,\rho\ dx\ \rightarrow  \int_{\R^N \setminus\{0\}}V(x)\,\widehat{\phi}\,\rho\ dx\ \ \mbox{ as }\ j\rightarrow +\infty,
\]
 where $V(x)$ is the potential defined in \eqref{limiteV}. Indeed:
\begin{eqnarray*}
&&\left|\int_{\R^N \setminus\{0\}}V\apice{m}_{\kappa,p_j}(x)\,\widehat{\phi\apice{m}_{p_j}}^{\kappa}\,\rho\ dx\ - \int_{\R^N\setminus\{0\}}V(x)\,\widehat{\phi}\,\rho\ dx\right|\leq
\\
&&
\qquad \leq \sup_{\rm{\supp } (\rho)} \left( |x|^2|V\apice{m}_{\kappa,p_j}(x)-V(x)| \right) \int_{\R^N \setminus\{0\}} \frac{\widehat{\phi\apice{m}_{p_j}}^{\kappa}\,|\rho|}{|x|^2}\ dx\
+ \\ 
&&\qquad\qquad\qquad\qquad\qquad\qquad\qquad\qquad +\  \left|    \int_{\R^N \setminus\{0\}} \frac{    (\widehat{\phi\apice{m}_{p_j}}^{\kappa} - \widehat\phi)\overbrace{|x|^2V(x) \rho(x)}^{:=\widetilde\rho(x)}}{|x|^2} dx\right|\\
&&
\qquad \leq \sup_{\rm{\supp } (\rho)}\left( |x|^2|V\apice{m}_{\kappa, p_j}(x)-V(x)|\right) \ C_{\rho}\ \left\| \frac{\widehat{\phi\apice{m}_{p_j}}^{\kappa}}{|x|}\right\|_{L^2(\R^N)} 
\!\!\! + \  \left|    \int_{\R^N \setminus\{0\}} \!\!\!\!\frac{    (\widehat{\phi\apice{m}_{p_j}}^{\kappa} - \widehat\phi)\widetilde\rho}{|x|^2} dx\right|\\
&&\qquad\longrightarrow 0 \ \ \ \ \ \mbox{ as $j\rightarrow +\infty$,}
\end{eqnarray*}
where for the first term we have used \eqref{normaPesata1} and the convergence result in \eqref{W_ptoWN} (observe that $supp(\rho)\subset (\widetilde{T}\apice{m}_{\kappa,p_j}\cap\widehat{A\apice{m}_{p_j}}^{\kappa})$ and so  
$V\apice{m}_{\kappa, p_j}$ satisfies \eqref{defVp}) while for the second term the convergence follows from \eqref{convdeboleL2peso} since $\widetilde\rho:=\rho |x|^2V(x) \in C^{\infty}_0(\R^N\setminus\{0\})$.\\
As a consequence by passing to the limit into \eqref{equaziocappuccio} we get
\begin{equation}
\int_{\R^N \setminus\{0\}} \nabla \widehat{\phi}\, \nabla\rho\ dx
-
\int_{\R^N \setminus\{0\}} V(x)\,\widehat{\phi}\, \rho\ dx
-
\widetilde\beta_1\apice{m} \int_{\R^N \setminus\{0\}}
\frac{\widehat\phi\,\rho}{|x|^2}\ dx  =0, 
\end{equation}
for any $\rho\in C^{\infty}_0 (\R^N\setminus\{0\})$, 
namely $\widehat\phi$ is (a weak and so classical) nontrivial nonnegative solution to the limit equation
\begin{equation}\label{equazioneLimitediphiCappuccio}
 -\widehat{\phi}''-\frac{N-1}{s}\widehat{\phi}'-V(s)\widehat{\phi}=\widetilde\beta_1\apice{m}\frac{\widehat{\phi}}{s^2}\qquad s\in (0, +\infty).
 \end{equation}
where $\widetilde\beta_1\apice{m}$ satisfies \eqref{limiteBeta1}.\\
By Theorem \ref{lemma:betastar} (see \eqref{unicheSoluzioniDiEquazioneLimite}) it follows that  $\widetilde \beta_1\apice{m}= -(N-1)$ namely, up to a subsequence
\[\widetilde\beta_1(m,p_{j})\rightarrow -(N-1)\ \ \mbox{ as }\ j\rightarrow +\infty,\]
 thus obtaining \eqref{conclusion:case1}.

\

\

{\emph{Proof in CASE 2.}}

\

Let $
\widetilde\beta^*$ be as in \eqref{defbetastar}, then by Theorem \ref{lemma:betastar}  we know that $\widetilde\beta^*=-(N-1)$ and so, taking $\widehat{\phi\apice{m}_{p_j}}^{0}$ as in \eqref{fiCappuccio}, we have
\begin{eqnarray}\label{stimadalbassoparziale}
-(N-1)&\stackrel{\mbox{\scriptsize{Theorem \ref{lemma:betastar}}}}{=}&\widetilde\beta^* \stackrel{\eqref{defbetastar}+\eqref{normaPesata1}}{\leq} \int_{\mathbb R^N}\left(|\nabla\widehat{\phi\apice{m}_{p_j}}^{0}(x)|^2-V(x)\widehat{\phi\apice{m}_{p_j}}^{0}(x)^2\right) dx
\nonumber
\\
&\stackrel{\eqref{eq cappuccio p}}{=}&\widetilde{\beta}_1(m,p_j)+ \int_{\widehat{A\apice{m}_{p_j}}^{0}}\left[V\apice{m}_{0,p_j}(x)-V(x)\right]\widehat{\phi\apice{m}_{p_j}}^{0}(x)^2 dx,
\end{eqnarray}
where the set $\widehat{A\apice{m}_{p_j}}^{0}$ is defined in \eqref{defApCapp}, $V\apice{m}_{0,p_j}$  satisfies \eqref{defVp} in $\widehat{A\apice{m}_{p_j}}^{0}$ and $V$ is as in \eqref{limiteV}.

\

Next we estimate the term  $\int_{\widehat{A\apice{m}_{p_j}}^{0}}\left[V\apice{m}_{0,p_j}(x)-V(x)\right]\widehat{\phi\apice{m}_{p_j}}^{0}(x)^2 dx$. As before $\varepsilon>0$ is fixed as in \eqref{stimasx}. Let  $R_{\varepsilon}$ be as in Lemma \ref{proposition:partePositiva} and fix $R>0$ such that 
\begin{equation}\label{ourchoiceofR}
R \geq\max\{1, R_{\varepsilon},N(N-2), 
\frac{N\sqrt{(N+2)(N-2)}}{\sqrt{\varepsilon}}\}.
\end{equation} We have
\begin{eqnarray*}
\int_{\widehat{A\apice{m}_{p_j}}^{0}}\left[V\apice{m}_{0,p_j}(x)-V(x)\right]\widehat{\phi\apice{m}_{p_j}}^{0}(x)^2 dx
&
\leq & \int_{\widehat{A\apice{m}_{p_j}}^{0}\cap\{|x|\leq R\}} \left|V\apice{m}_{0,p_j}(x)-V(x)\right|\widehat{\phi\apice{m}_{p_j}}^{0}(x)^2 dx
\\ 
&& \ +\
\int_{\widehat{A\apice{m}_{p_j}}^{0}\cap\{|x|> R\}} V(x)\widehat{\phi\apice{m}_{p_j}}^{0}(x)^2 dx
\\
&&\ +\
\int_{\widehat{F\apice{m}_{p_j}}^{0}\cap\{|x|> R\}} V\apice{m}_{0,p_j}(x) \widehat{\phi\apice{m}_{p_j}}^{0}(x)^2 dx
\\
&&\  +\
\int_{\widehat{T\apice{m}_{p_j}}^{0} \cap\{|x|> R\}} V\apice{m}_{0,p_j}(x) \widehat{\phi\apice{m}_{p_j}}^{0}(x)^2 dx
\\
&& \\
&=& I_j\ +\ II_j \ + \ III_j\ +\ IV_j,
\end{eqnarray*}
where the set $\widehat{F\apice{m}_{p_j}}^{0}$  is as in \eqref{def:Pprisclata}
 while the set  $\widehat{T\apice{m}_{p_j}}^{0} $ is the scaling of the remaining set $A\apice{m}_{p_j}\setminus B\apice{m}_{0,p_j} $ with respect to the same scaling parameter $M\apice{m}_{0,p_j}$. Namely
\[
\widehat{T\apice{m}_{p_j}}^{0}  :=(M\apice{m}_{0,p_j})^{\frac{p_j-1}{2}}\big( A\apice{m}_{p_j}\setminus B\apice{m}_{0,p_j}\big).
\] 
Then
\begin{align*}
I_j =\ & \int_{\widehat{A\apice{m}_{p_j}}^{0}\cap\{|x|\leq R\}} \left|V\apice{m}_{0,p_j}(x)-V(x)\right||x|^2\frac{\widehat{\phi\apice{m}_{p_j}}^{0}(x)^2 }{|x|^2}dx\\
\leq\ & \sup_{B_R(0)}\left|V\apice{m}_{0,p_j}(x)-V(x)  \right|R^2\int_{\mathbb R^N}
\frac{\widehat{\phi\apice{m}_{p_j}}^{0}(x)^2}{|x|^2} dx\\
\overset{\eqref{normaPesata1}}{=} &  \sup_{B_R(0)}\left| V\apice{m}_{0,p_j}(x)-V(x)  \right|R^2
\stackrel{\eqref{V_ptoVN}}{\leq }\varepsilon
\end{align*}
for $j$ sufficiently large.\\
Observe that the radial function $|x|\mapsto V(x)|x|^2\rightarrow 0$  has  a unique maximum for $|x|=N(N-2)$, hence by our choice of $R$ in \eqref{ourchoiceofR}
\[\sup_{\{|x|>R\}}( V(x)|x|^2)\overset{\eqref{ourchoiceofR}}{\leq} V(R)R^2\leq \frac{N^2(N+2)(N-2)}{R^2}\overset{\eqref{ourchoiceofR}}{\leq}\varepsilon\]
and so, for any $j\in\mathbb N$:
\begin{align*}
II_j\ =& \int_{\widehat{A\apice{m}_{p_j}}^{0}\cap\{|x|> R\}} V(x)|x|^2\frac{\widehat{\phi\apice{m}_{p_j}}^{0}(x)^2}{|x|^2} dx\\
 \leq\ &
\sup_{\{|x|>R\}}( V(x)|x|^2) \int_{\widehat{A\apice{m}_{p_j}}^{0}\cap\{|x|> R\}} \frac{\widehat{\phi\apice{m}_{p_j}}^{0}(x)^2}{|x|^2} dx
 \\
 \leq\ & \varepsilon \int_{\R^N}\frac{\widehat{\phi\apice{m}_{p_j}}^{0}(x)^2}{|x|^2} dx\\
 \overset{\eqref{normaPesata1}}{=} &\   \varepsilon.
\end{align*}
By our choice of $R$ in \eqref{ourchoiceofR} we may also apply Lemma \ref{proposition:partePositiva} getting, for $j$ large enough:
\begin{eqnarray*}
III_j =\int_{\widehat{F\apice{m}_{p_j}}^{0}\cap\{|x|> R\}} V\apice{m}_{0,p_j}(x) \widehat{\phi\apice{m}_{p_j}}^{0}(x)^2 dx
\leq \varepsilon 
\end{eqnarray*}
In order to estimate the term $IV_j$  we need all the results about the function $f\apice{m}_{p_j}$ defined in \eqref{definition:f_p}. To this purpose let us observe that the number $K_{\varepsilon}$  in  \eqref{assumptionCASE2} has been chosen so that both Lemma 
\ref{proposition:StimaMaxASxDiEpsilon_p^-} and Lemma \ref{proposition:StimaMaxADxDiEpsilon_p^-} hold. Moreover since
\begin{equation}\label{altroModo} 
A\apice{m}_{p_j}\setminus B\apice{m}_{0,p_j}=\cup_{i=1}^{m-1} B\apice{m}_{i,p_j} =\left\{y\in\R^N :\ r\apice{m}_{1,p_j} < |y| < 1   \right\},
\end{equation} 
it follows that
\[\widehat{T\apice{m}_{p_j}}^{0}= (M\apice{m}_{0,p_j})^{\frac{p_j-1}{2}}\big(\cup_{i=1}^{m-1} B\apice{m}_{i,p_j}   \big)=\left\{x\in\R^N :\ r\apice{m}_{1,p_j} (M\apice{m}_{0,p_j})^{\frac{p_j-1}{2}} < |x| < (M\apice{m}_{0,p_j})^{\frac{p_j-1}{2}}   \right\}
\]
where by the property $(\mathcal A\apice{m}_1)$ (which holds true by Proposition \ref{proposition:AmiValePerOgnim}) one has
 \begin{equation} \label{primaos}
r\apice{m}_{1,p_j} (M\apice{m}_{0,p_j})^{\frac{p_j-1}{2}}>R, \quad\mbox{for $j$ sufficiently large}.
 \end{equation}
As a consequence, for $j$ sufficiently large, we have:
\begin{eqnarray}\label{laprimadi4j}
IV_j &=& \int_{\widehat{T\apice{m}_{p_j}}^{0}  \cap\{|x|> R\}} V\apice{m}_{0,p_j}(x)\widehat{\phi\apice{m}_{p_j}}^{0}(x)^2 dx
\nonumber
\\
&\overset{\eqref{primaos}}{=} & \int_{\widehat{T\apice{m}_{p_j}}^{0}  } V\apice{m}_{0,p_j}(x)\widehat{\phi\apice{m}_{p_j}}^{0}(x)^2 dx  
\nonumber
\\
&\stackrel{\eqref{definition:f_p}}{=}  & p_j \int_{A\apice{m}_{p_j}\setminus B\apice{m}_{0,p_j} }f\apice{m}_{p_j}(y)\frac{\phi_{{p_j}}(y)^2}{|y|^2} dy
\nonumber
\\
&\stackrel{\eqref{altroModo} }{=}  & p_j \sum_{i=1}^{m-1}   \int_{ B\apice{m}_{i,p_j} }f\apice{m}_{p_j}(y)\frac{\phi_{{p_j}}(y)^2}{|y|^2} dy.
\end{eqnarray}
Let $K_{\varepsilon}$ be as in Lemma \ref{proposition:StimaMaxADxDiEpsilon_p^-} and let us define the sets $G\apice{m}_{i,p_j,\varepsilon}:=G\apice{m}_{i,p_j, K}$  with $K=K_{\varepsilon}$, $i=1\ldots, m-1$, where $G\apice{m}_{i,p_j, K}$ is as in \eqref{defG}. Let us also consider  the
set $H\apice{m}_{i,p_j,\varepsilon}$, $i=1\ldots, m-1$, introduced in \eqref{defH},
by Lemma  \ref{proposition:StimaMaxASxDiEpsilon_p^-}
 and \ref{proposition:StimaMaxADxDiEpsilon_p^-} 
\[\emptyset\neq (G\apice{m}_{i,p_j,\varepsilon}\cup H\apice{m}_{i,p_j,\varepsilon})\subset B\apice{m}_{i,p_j}.\]
From \eqref{laprimadi4j}, for $j$ sufficiently large,  it then follows
\begin{eqnarray*}
IV_j &= &   p_j \sum_{i=1}^{m-1}\int_{G\apice{m}_{i,p_j, K_{\varepsilon}}\cup  H\apice{m}_{i,p_j,K_{\varepsilon}}}
f\apice{m}_{p_j}(y)\frac{\phi_{{p_j}}(y)^2}{|y|^2} dy\\
&& 
+\ 
p_j\sum_{i=1}^{m-1}\int_{\big\{\frac{1}{K_{\varepsilon}}(M\apice{m}_{i,p_j})^{-\frac{p_j-1}{2}}\leq |y|\leq K_{\varepsilon} (M\apice{m}_{i,p_j})^{-\frac{p_j-1}{2}}\big\}}  
f\apice{m}_{p_j}(y)\frac{\phi_{{p_j}}(y)^2}{|y|^2} dy
\\
&\stackrel{\eqref{normalizzazione}+\eqref{Q3Nge3}}{\leq} &
p_S\max_{y\in\bigcup_{i=1}^{m-1} (\overline{G\apice{m}_{i,p_j,K_{\varepsilon}}}\cup \overline{H\apice{m}_{i,p_j,K_{\varepsilon}}})}
 f\apice{m}_{p_j}(y)
 \\
&&+\
p_S\, C \sum_{i=1}^{m-1}\int_{\big\{\frac{1}{K_{\varepsilon}}(M\apice{m}_{i,p_j})^{-\frac{p_j-1}{2}}\leq |y|\leq K_{\varepsilon} (M\apice{m}_{i,p_j})^{-\frac{p_j-1}{2}}\big\}}
 \frac{\phi_{{p_j}}(y)^2}{|y|^2} dy
\\
&\overset{(*)}{\leq} & 5\;\varepsilon
\ +\ 5\, C\sum_{i=1}^{m-1}\int_{\{ |x|\in [\frac{1}{K_{\varepsilon}}, K_{\varepsilon} ]\}}\frac{\widehat{\phi\apice{m}_{p_j}}^{i}(x)^2}{|x|^2} dx,
\end{eqnarray*}
where in $(*)$ we have used that $p_S\leq 5$ for any $N\geq 3$, we have estimated the first term by Lemma \ref{proposition:StimaMaxASxDiEpsilon_p^-}
 and \ref{proposition:StimaMaxADxDiEpsilon_p^-} and we have rescaled the second term.  By collecting the estimates in $I_j, II_j, III_j$ and $IV_j$ we then have, for $j$ sufficiently large:
 \begin{eqnarray*}
\int_{\widehat{A_{p_j}^{+}}}\left[V\apice{m}_{0,p_j}(x)-V(x)\right]\widehat{\phi\apice{m}_{p_j}}^{0}(x)^2 dx &\leq &  8\; \varepsilon  + 5\, C\sum_{i=1}^{m-1}\int_{\{ |x|\in [\frac{1}{K_{\varepsilon}}, K_{\varepsilon} ]\}}\frac{\widehat{\phi\apice{m}_{p_j}}^{i}(x)^2}{|x|^2} dx\\
&\leq & 9\;\varepsilon.
\end{eqnarray*}
where the last inequality follows by the assumption \eqref{assumptionCASE2}.
Combining  this result with \eqref{stimadalbassoparziale} we have then proved that there exists $j_{\varepsilon}\in\mathbb N$ such that:
\[
\widetilde{\beta}_1(m,p_j) \geq -(N-1) -9\;\varepsilon\; ,\qquad\mbox{ for }\ j\geq j_{\varepsilon},
\]
namely we have obtained \eqref{conclusion:case2}.
\end{proof}

\

\begin{remark}\label{rmkDifferenceN2}
We stress that Proposition \ref{theorem:limiteBeta1}  does not hold in dimension $N=2$, when $p\rightarrow +\infty$. 
Indeed in the $2$-dimensional case and when $m=2$  it is proved  in \cite[Theorem 6.1]{DeMarchisIanniPacellaMathAnn} that
$\lim_{p\rightarrow +\infty}\widetilde{\beta}_1(2,p) =-\frac{\ell^2 +2}{2}< -1$, for a  number $\ell>0$ which is explicitly computed. 
%
\end{remark}

\

\

\section{Proof of Theorem \ref{teoPrincipaleMorse}}\label{section:ProofMain}
\label{subsectionproofM}

\

\begin{proof} Let $u\apice{m}_p$ be a solution of \eqref{problem} with $m\in\N^+$ nodal regions and $p\in (1,p_S)$. As explained in Section \ref{Section:Asymptotic4} we approximate the ball $B$ by the annulus 
$A_n$ choosing $n=n\apice{m}_p$, where $n\apice{m}_p$ is defined in \eqref{np}, and we consider the radial weighted linear operators $\widetilde L\apice{m}_{p,rad}$ defined in \eqref{defOpRadBreve}.
The eigenvalues of $\widetilde L\apice{m}_{p,rad}$, as in \eqref{np1}, are
\[\widetilde{\beta_i}(m,p),\ \mbox{ for any }i\in\mathbb N^+.\] 
We also set $\widetilde L\apice{m}_{p}:=\widetilde{{L^n}}\apice{m}_{\!\!\!\! p}$ for $n=n\apice{m}_p$, where $\widetilde{{L^n}}\apice{m}_{\!\!\!\! p}$ is the weighted operator defined in \eqref{weightedOp}, whose eigenvalues we denote by
\[\widetilde{\mu_i}(m,p):=\widetilde{\mu_i^{n}}(m,p),  \ \mbox{ for }n=n\apice{m}_p,\quad\mbox{for any $i\in\mathbb N^+$}.
\]
The number of negative eigenvalues of $\widetilde L\apice{m}_{p}$ is then
\[\widetilde{k_p}(m):=\widetilde{k_p^n}(m), \ \mbox{ for }n=n\apice{m}_p,\]
where $\widetilde{k_p^n}(m)$ is as in \eqref{kappatilde}.

\

By Proposition \ref{proposition:MorseRadialeConPeso}  to determine the Morse index $\mathsf{m}(u\apice{m}_p)$ is equivalent to counting the number  $\widetilde{k_p}(m)$ of negative eigenvalues $\widetilde{\mu_i}(m,p)$ of the operator $\widetilde L\apice{m}_{p}$. Hence we should  show that
\begin{equation}\label{tesissima}
\widetilde{k_p}(m)=
            m+ N(m-1)   \quad \hbox{ for  $p$ close to $p_S$.}
\end{equation}

By \eqref{decomposizioneAutovalori}  we have that
\begin{equation}\label{decomposizioneAutovaloriCasoMio} \widetilde{\mu}_j(m,p)\ =\ \widetilde{\beta_i}(m,p)\ +\ \lambda_k, \ \ \mbox{ for } i,j=1,2,\ldots,\ \ k=0,1, \ldots
\end{equation}
where $\lambda_k$ are the eigenvalues of the Laplace-Beltrami operator $-\Delta_{S^{N-1}}$ on the unit sphere $S^{N-1}$, $N\geq 3$. As we already mentioned in \eqref{lambdak}
\[\lambda_k=k(k+N-2) \ (\geq 0),\ \ k=0,1, \ldots \]
with multiplicity (see \cite{BerezinShubin})
\begin{equation}
\label{multiplicityqui}
N_k-N_{k-2}
\end{equation}
where $N_h$, $h\in\mathbb Z$, is defined in \eqref{N_h}.

\

By \eqref{corb} we already know that
\begin{equation} \label{corb3}
\widetilde{k_p}(m)\geq  m+ N(m-1)
\end{equation}
and that
\begin{equation}
\label{corb2}\widetilde{\beta_1}(m,p)\leq\ldots\leq \widetilde{\beta}_{m}(m,p)<\ 0\ \leq\widetilde{\beta}_{m+1}(m,p)\leq \ldots.
\end{equation}

\

By \eqref{corb2}, since $\lambda_k\geq 0$, it immediately follows  that
\begin{equation}
\label{immediato}\widetilde{\beta_i}(m,p)\ +\ \lambda_k,\ \geq \ 0\qquad \forall\, i\geq m+1, \  \ \forall\, k\geq 0
\end{equation}
so that  \emph{all the eigenvalues $\widetilde{\beta_i}(m,p)$ with $i\geq m+1$ cannot produce any negative eigenvalue} $\widetilde{\mu_j}(m,p)$ by the formula \eqref{decomposizioneAutovaloriCasoMio}.

\

Next we analyze the contribution given by the last negative eigenvalue $\widetilde{\beta}_m(m,p)$. Observe that $\lambda_1=N-1$ and, by Proposition  \ref{LemmaStimeAutovaloriRadialeConPeso Iparte},  $\ \widetilde{\beta}_m(m,p)> -(N-1)$, hence we get
\begin{equation}\label{noneg}\widetilde{\beta}_m(m,p)\ +\ \lambda_k\ >\ 0,\ \ \  \ \forall \, k\geq 1.
\end{equation}
On the other side, from \eqref{corb2} and observing that $\lambda_0=0$, we have that
\begin{equation}\label{primoautoneg}
\widetilde{\beta}_m(m,p)\ +\lambda_0\ =\ \widetilde{\beta}_{m}(m,p)\ <\ 0.
\end{equation}
Hence, by \eqref{decomposizioneAutovaloriCasoMio}, \eqref{primoautoneg} gives \emph{one negative eigenvalue of $\widetilde L\apice{m}_{p}$, which is radial and simple}, since by \eqref{multiplicityqui} it follows that  $\lambda_0$ has multiplicity one. Furthermore, because of \eqref{noneg}, this eigenvalue is \emph{the only} negative eigenvalue obtained by summing $\widetilde{\beta}_m(m,p)$ with the eigenvalues of $-\Delta_{S^{N-1}}$

\

Then \eqref{tesissima} is  obviously proved in the case $m=1$.

\

In the case $m\geq 2$ we need to study the remaining negative eigenvalues $\widetilde{\beta_i}(m,p)$, $i=1, \ldots, m-1$ and, since there are exactly $m$ radial simple negative eigenvalues of $\widetilde L\apice{m}_{p}$, we have to prove that they
produce exactly $\ N(m-1)\ $ negative \emph{nonradial} eigenvalues $\widetilde{\mu_j}(m,p)$ by the formula \eqref{decomposizioneAutovaloriCasoMio} (counted with their multiplicity).  
%


\

Since by Proposition \ref{theorem:limiteBeta1} and Corollary \ref{cor:limiteBetai} we have
\begin{equation}\label{stimasxaltri}\liminf_{p\rightarrow p_S}\widetilde{\beta}_i(m,p)\geq -(N-1),\qquad\mbox{ for any } \, i=1,\ldots, m-1
\end{equation}
and observing that $\lambda_k\geq 2N>N-1$ for all $k\geq 2$, it follows  that for $p$ sufficiently close to $p_S$
\begin{equation}\label{info3}
\widetilde{\beta}_i(m, p)\  +\ \lambda_k\ >\  0, \quad\mbox{ for any }i=1,\ldots,m-1, \, \mbox{ for all } k\geq 2.
\end{equation}
 
\

By \eqref{info3} and the estimate \eqref{corb3}  we immediately have that for $p$ close to $p_S$
\begin{equation}\label{stimadx}
\widetilde{\beta}_i(m,p)+\lambda_1< 0, \quad\mbox{ for any } i=1,\ldots, m-1.
\end{equation}
Indeed, since there are exactly $m$ radial simple negative eigenvalues of $\widetilde L\apice{m}_{p}$, by \eqref{corb3} there must be at least $N(m-1)$ negative \emph{nonradial} eigenvalues of $\widetilde L\apice{m}_{p}$ (counted with their multiplicity).
By \eqref{info3}, for $p$ close to $p_S$, these nonradial eigenvalues must be obtained by the formula \eqref{decomposizioneAutovaloriCasoMio} for $i=1,\ldots, m-1$ and  $k=1$ (for $k=0$ only radial eigenvalues may be constructed).
Hence, observing that the multiplicity of $\lambda_1$ is $N$ (by \eqref{multiplicityqui}), we deduce that, if 
\eqref{stimadx} does not hold, then \eqref{corb3} cannot be satisfied.

\

In conclusion by \eqref{info3} and \eqref{stimadx},  for $p$ close to $p_S$ there are exactly 
\emph{$ N(m-1) $ negative nonradial eigenvalues of  $\widetilde L\apice{m}_{p}$}, counted with their multiplicity,
given by
\begin{equation}\label{info22}
\widetilde{\beta}_i(m,p)\  +\ \lambda_1\ <\  0,  \qquad  i=1,\ldots, m-1.
\end{equation}

This proves  \eqref{tesissima} and ends the proof of Theorem \ref{teoPrincipaleMorse}.

\end{proof}

\

\begin{remark}
We point out that, combining \eqref{stimasxaltri}  with \eqref{stimadx} and observing that $\lambda_1=-(N-1)$,  we also get 
\begin{equation}\label{convergenzadasinistraaltri} 
\lim_{p\rightarrow p_S}\widetilde{\beta}_i(m,p) = -(N-1), \quad\ \forall\, i=1,\ldots, m-1,
\end{equation}
as anticipated in Remark \ref{remarkDopoProp4.2}.
\end{remark}

\

{\bf Acknowledgments. }
The authors would like to thank prof. T. Weth for useful discussions and for pointing out references  \cite{Bruning1}, \cite{Bruning2} and \cite{Donnelly}.

\

\


\begin{thebibliography}{99}

\bibitem{AftalionPacella}
A. Aftalion, F. Pacella,
\emph{Qualitative properties of nodal solutions of semilinear elliptic equations in radially symmetric domains},
Comptes Rendus Mathematique 339  (2004),  339-344.

\bibitem{AtkPel1} 
F.V. Atkinson, L.A. Peletier, 
\emph{Emden-Fowler equations involving critical exponents}, 
Nonlinear Anal. Theory Methods Appl. 10 (1986), no. 8, 755-776.

\bibitem{AtkPel2} 
F.V. Atkinson, L.A. Peletier, 
\emph{Large solutions of elliptic equations involving critical exponents},  Asymptot. Anal. 1 (1988), 139-160.

\bibitem{BartschWeth}
T. Bartsch, T. Weth,
\emph{A note on additional properties of sign changing solutions to superlinear elliptic equations},
Topological Methods in Nonlinear Analysis 22 (2003), 1-14.


\bibitem{BenAyedElMehdiPacellaAlmostCritical}
M. Ben Ayed, K. El Mehdi, F. Pacella, 
\emph{Classification of low energy sign-changing solutions of an almost critical problem}, Journal of Functional Analysis 250 (2007), 347-373




\bibitem{BerezinShubin}
F.A. Berezin, M.A. Shubin,
 \emph{The Schr\"odinger equation}, Kluwer Academic Publishers Group, Dordrecht, 1991.
 
\bibitem{Bonheure}
D. Bonheure, V. Bouchez, C. Grumiau, J. Van Schaftingen,
\emph{Asymptotics
and symmetry of least energy nodal solutions of Lane-Emden problems
with slow growth}, Comm. Cont. Math. 10 (2008), 609-631.

\bibitem{Bruning1}
J. Br\"uning, E. Heintze, \emph{Repr\'esentations des groupes d'isom\'etries dans les sous-espaces propres du laplacien},
C.R. Acad. Sc. Paris 286 (1978), 921-923. 


\bibitem{Bruning2}
J. Br\"uning, E. Heintze, \emph{Representations of compact Lie groups and elliptic operators},
Inventiones Math. 50 (1979), 169-203. 



\bibitem{ChenLin}
C.C. Chen, C.S. Lin,
\emph{Mean field equations of Liouville type with singular data: sharper estimates},
Discrete Contin. Dyn. Syst. 28 (2010), no. 3, 1237-1272.



\bibitem{DeMarchisIanni}
F. De Marchis, I. Ianni,
\emph{Blow-up of solutions of semilinear heat equations in non radial domains of $\mathbb R^2$},
DCDS-A 35 (2015), 891-907.





\bibitem{DeMarchisIanniPacellaJEMS}
F. De Marchis, I. Ianni and F. Pacella,
\emph{Asymptotic analysis and sign changing bubble towers for Lane-Emden problems},
Journal of the European Mathematical Society 17 (8) (2015), 2037-2068.

\bibitem{DeMarchisIanniPacellaMathAnn}
F. De Marchis, I. Ianni, F. Pacella,
\emph{Exact Morse index computation for nodal radial solutions of Lane-Emden problems}, Math. Ann., to appear, doi 10.1007/s00208-016-1381-6.


\bibitem{DicksteinPacellaSciunzi} 
F. Dickstein, F. Pacella and B. Sciunzi, 
\emph{Sign-changing stationary solutions and blow up for the nonlinear heat equation in dimension two}, 
Journal of Evolution Equation 14 (3) (2014), 617-633.



\bibitem{Donnelly}
H. Donnelly,
\emph{$G$-spaces, the asymptotic splitting of $L^2(M)$ into irreducibles}, Math. Ann. 237 (1978), 23-40.


\bibitem{GladialiGrossiPacellaSrikanth}
F. Gladiali, M. Grossi, F. Pacella, P. N. Srikanth,
\emph{Bifurcation and symmetry breaking for a class of semilinear elliptic equations in an annulus},
Calculus of Variations and P.D.E. 40 (2011), 295-317.

\bibitem{Grossi}
M. Grossi, \emph{On the shape of solutions of an asymptotically linear
problem}, Ann. Scuola norm. Superiore Pisa 8 (2009), 429-449.



\bibitem{GrossiGrumiauPacella2}
M. Grossi, C. Grumiau, F. Pacella,
\emph{Lane Emden problems with large exponents and singular Liouville equations},
J. Math. Pures Appl. 101 (2014), 735-754.




\bibitem{HarrabiRebhibSelmi}
A. Harrabi, S. Rebhib, A. Selmi, 
\emph{Existence of radial solutions with prescribed number
of zeros for elliptic equations and their Morse index}, J. Differential Equations 251 (2011), 2409-2430.

\bibitem{Iacopetti}
A. Iacopetti, \emph{Asymptotic analysis for radial sign-changing solutions of the Brezis-Nirenberg problem}, Annali di Matematica Pura ed Appl. 194 (6) (2015), 1649-1682.


\bibitem{MarinoPacellaSciunzi}
V. Marino, F. Pacella, B. Sciunzi,
\emph{Blow up of solutions of semilinear heat equations in general domains},
Commun. Contemp. Math. 17 (2015).


\bibitem{PistoiaWeth}
A. Pistoia, T. Weth,
\emph{Sign changing bubble tower solutions in a slightly subcritical semilinear Dirichlet problem},
Ann. I. H. Poincar\'e - AN 24 (2007), 325-340.

\bibitem{StraussBook}
W.A. Strauss, Partial differential equations, John Wiley \& Sons  (1992).

\bibitem{Strauss}
W.A. Strauss,
\emph{Existence of solitary waves in higher dimensions},
Comm. Math. Phys. 55 (1977), 149-162.

\end{thebibliography}
\end{document}